\documentclass[1996/10/24]{amsart}

\usepackage{color}
\usepackage{amssymb,stmaryrd}
\usepackage{graphicx}

\setlength{\topmargin}{-0.4in}
\setlength{\textheight}{9.0in}     
\setlength{\textwidth}{6.5in}      
\setlength{\oddsidemargin}{.20in}  %
\setlength{\evensidemargin}{.20in} %

\usepackage{color}
\definecolor{black}{rgb}{0,0,0}

\definecolor{red}{rgb}{1,0,0}

\definecolor{blue}{rgb}{0,0,1}

\newcommand{\nn}{\nonumber}
\def\Real{\mathbb{R}}

\newcommand{\Ct}{{\mathcal T}}

\newcommand{\Lb}{\{\hspace{-4.0pt}\{}
\newcommand{\Rb}{\}\hspace{-4.0pt}\}}

\def\bfz{\textit{\textbf{z}}}

\def\bfn{\textit{\textbf{n}}}

\def\bfu{\textit{\textbf{u}}}

\def\bfv{\textit{\textbf{v}}}
\def\bfw{\textit{\textbf{w}}}
\def\bfx{\textit{\textbf{x}}}

\def\bfe{\textit{\textbf{e}}}

\def\bfH{\textit{\textbf{H}}}

\def\bfP{\textit{\textbf{P}}}

\def\bfW{\textit{\textbf{W}}}

\def\bfPi{\boldsymbol{\Pi}}

\def\bfeta{\boldsymbol{\eta}}

\def\bfphi{\boldsymbol{\phi}}
\def\bfpsi{\boldsymbol{\psi}}
\def\bfpsi{\boldsymbol{\psi}}

\def\calF{\mathcal{F}}

\def\calT{\mathcal{T}}

\def\calE{\mathcal{E}}
\def\calK{\mathcal{K}}

\usepackage{url}

\newtheorem{thm}{Theorem}[section]

\newtheorem{lem}[thm]{Lemma}
\theoremstyle{remark}
\newtheorem{rem}[thm]{Remark}

\newcommand{\bu}{{\bf u}}

\newtheorem{example}[thm]{Example}
\newtheorem{remark}{Remark}[section]
\numberwithin{equation}{section}
\numberwithin{figure}{section}
\begin{document}

\title[MFEM for biharmonic  and von K\'arm\'an equations]{A mixed finite element scheme for biharmonic 
equation with variable coefficient and von K\'arm\'an equations}

\author{Huangxin Chen}
\address{School of Mathematical Sciences and Fujian Provincial Key Laboratory on Mathematical Modeling and 
High Performance Scientific Computing, Xiamen University, Fujian, 361005, China}
\email{chx@xmu.edu.cn}

\author{Amiya K. Pani}
\address{Department of Mathematics, Indian Institute of Technology,
Bombay, Powai, Mumbai-400076, India} 
\email{akp@math.iitb.ac.in}

\author{Weifeng Qiu}
\address{Department of Mathematics, City University of Hong Kong,
83 Tat Chee Avenue, Kowloon, Hong Kong, China}
\email{weifeqiu@cityu.edu.hk}

\thanks{The work of Huangxin Chen was supported by the NSF of China (Grant No. 11771363) and the Fundamental Research Funds for the Central Universities (Grant No. 20720180003). The work of Amiya K. Pani is supported  by IITB Chair Professor's  fund and also partly by a MATRIX Grant No. MTR/201S/000309 (SERB, DST, Govt. India). Weifeng Qiu is supported by a grant from the Research Grants Council of the Hong Kong Special Administrative Region, China (Project No. CityU 11302219). The third author is the corresponding author.}

\subjclass[2010]{}

\keywords{Biharmonic equation, von K\'arm\'an equations, mixed finite element methods, 
element-wise stabilization, discrete $H^{2}$-stability, positive definite}

\date{}

\begin{abstract}
In this paper, a new mixed finite element scheme using element-wise stabilization is introduced 
for the biharmonic equation with variable coefficient on Lipschitz polyhedral domains. The proposed scheme 
doesn't involve any integration along mesh interfaces. The gradient of 
the solution is approximated by $H({\rm div})$-conforming $BDM_{k+1}$ element or vector valued Lagrange 
element with order $k+1$, while the solution is approximated by Lagrange element with order $k+2$ 
for any $k\geq 0$.This scheme can be easily implemented and produces 
positive definite linear system. We provide a new discrete $H^{2}$-norm stability, which is useful not only in 
analysis of this scheme but also in $C^{0}$ interior penalty methods and DG methods. 
Optimal convergences in both discrete $H^{2}$-norm and $L^{2}$-norm are derived. This scheme  with 
its analysis is further generalized to the von K\'arm\'an equations. Finally, numerical results verifying the theoretical 
estimates of the proposed algorithms are also presented.
\end{abstract}

\maketitle

\section{Introduction}
In the first part of this paper, a new mixed finite element scheme is proposed and analyzed 
for the following biharmonic equation with variable coefficient:
\begin{subequations}\label{equ_biharmonic}
\begin{align}
\label{equ_biharmonic1}
\Delta \left( \kappa \Delta u \right) = f, &  \ \ \textrm{in}  \ \ \Omega,  \\
\label{equ_biharmonic_dirichlet}
u = 0, &  \ \ \textrm{on}  \ \  \partial \Omega,\\
\label{equ_biharmonic2}
\frac{\partial u}{\partial n} = 0, &  \ \ \textrm{on}  \ \  \partial \Omega,
\end{align}
\end{subequations}
where $\Omega \subset \mathbb{R}^d (d\in \mathbb{N})$ is a Lipschitz polygonal or polyhedral domain, 
the coefficient $\kappa \in W^{1,\infty}(\Omega)$ such that  $0<\kappa_{0}  \leq \kappa(\boldsymbol{x}) 
\leq \kappa_{1}$,  and $f \in H^{-1}(\Omega)$. 
By using element-wise stabilization, our scheme doesn't involve any integration along mesh interfaces. 
Our scheme uses $H(\text{div})$-conforming $BDM_{k+1}$ or vector valued Lagrange element with 
order $k+1$ to approximate $\bfw = \nabla u$, and approximates $u$ with Lagrange element with 
order $k+2$ for any $k\geq 0$.
The second part of this paper is related to 
an application of our scheme to the von K\'arm\'an model, which can be stated  as follows:
\begin{subequations}\label{equ_von_karman}
\begin{align}
\Delta^2 \xi - [\xi,\psi]= f, &  \ \ \textrm{in}  \ \ \Omega, \label{vk_eq1}\\
\Delta^2 \psi  + [\xi,\xi]= 0, &  \ \ \textrm{in}  \ \ \Omega,\label{vk_eq2} \\
\xi =\frac{\partial \xi}{\partial n}  = 0, &  \ \ \textrm{on}  \ \  \partial \Omega, \label{vk_bc1}\\
\psi = \frac{\partial \psi}{\partial n} = 0, &  \ \ \textrm{on}  \ \  \partial \Omega,\label{vk_bc2}
\end{align}
\end{subequations}
where $\Omega \subset \mathbb{R}^2$ is a Lipschitz polygonal domain, $f \in  H^{-1}(\Omega)$, 
and the von K\'arm\'an bracket $[\cdot,\cdot]$ appearing in (\ref{vk_eq1}) and (\ref{vk_eq2}) is defined by
\[
[\eta,\phi] = \frac{\partial^2 \eta}{\partial x_1^2} \frac{\partial^2 \phi}{\partial x_2^2} + \frac{\partial^2 \eta}{\partial 
x_2^2} \frac{\partial^2 \phi}{\partial x_1^2} - 2 \frac{\partial^2 \eta}{\partial x_1 \partial x_2} 
\frac{\partial^2 \phi}{\partial x_1 \partial x_2} = {\rm cof}(D^2 \eta): D^2 \phi.
\]
Here ${\rm cof}(D^2 \eta)$ denotes the cofactor matrix of the Hessian of $\eta$ and $A : B$ denotes 
the Frobenius inner product of the matrices $A$ and $B$.


In literature, there are many numerical methods available for the biharmonic equation, that is, the problem  
(\ref{equ_biharmonic}) with $\kappa = 1$. Some of them can be easily generalized to include biharmonic problem 
with variable coefficients. We provide below a brief summary of results which are relevant to our present investigation.
\begin{itemize}

\item {\it Numerical methods approximating both $u$ and $\Delta u$}. The Ciarlet and Raviart (C-R) method 
\cite{CRmethod} uses $u$ and $\Delta u$ as unknowns and thereby, gives rise to  a system of Poisson problems. 
Then, $H^{1}$-conforming finite element spaces are used to approximate both $u$ and $\Delta u$, and  
it has no stabilization along mesh interfaces. Thus, the C-R method can be easily implemented. 
The stability of the C-R method with respect to discrete $H^{2}$-norm is shown in \cite{BabushkaCR}. However, 
the analysis in \cite{BabushkaCR} requires that the domain is convex (see, the proof of \cite[Lemma~$5$]{BabushkaCR}). 
The optimal convergence to $u$ of the C-R method, which  is obtained in \cite{Scholtz1978} though the convergence to 
$\Delta u$ is suboptimal. Though the corresponding linear system of the C-R method is a saddle point one, its 
conditional number may be of order $O(h^{-2})$ if numerical approximations to $u$ and $\Delta u$ are calculated 
alternatively. Optimal convergence to $\Delta u$ is obtained by the method \cite{Falk1978} 
which is similar to the C-R method. But the analysis in \cite{Falk1978} doesn't provide the stability 
with respect to discrete $H^{2}$-norm. For $hp$-mixed DG method  with penalization of the interelement boundary 
jump terms applied to the split system, see \cite{GNP2008}.By approximating $\kappa \Delta u$ instead of $\Delta u$, 
all these methods can be easily generalized for variable coefficient $\kappa$. 

\item {\it Numerical methods approximating both $u$ and $D^{2}u$}. The Hellan-Herrmann-Johnson (HHJ) method 
analyzed by Johnson in \cite{Johnson1973} treats $u$ and $D^{2}u$ as unknowns. It uses $H^{1}$-conforming 
approximations to $u$ and normal-normal continuous symmetric approximations to $D^{2}u$. Optimal convergence to 
both $u$ and $D^{2}u$ is shown in \cite{Johnson1973}. Since it naturally provides the stability with respect to discrete 
$H^{2}$-norm, the HHJ method and its variants are suitable for solving the von K\'arm\'an model 
(see \cite{Miyoshi1976, Brezzi1981, Reinhart1982}). We notice that the method  \cite{BehrensGuzman} 
uses the same formulation by the HHJ method, but writes the biharmonic equation as four first-order equations 
instead of two second-order equations. The method \cite{BehrensGuzman} obtains optimal convergence 
to $u$, $\nabla u$ and $D^{2}u$, and its global unknowns after hybridization are numerical approximations 
to the trace of $u$ and $\nabla u$ along the mesh interfaces. Overall, the analysis of the HHJ method 
and its variant (including the method \cite{BehrensGuzman}) can be generalized to the von K\'arm\'an equations 
(\ref{equ_von_karman}). But the implementation may not be easy and the corresponding linear system 
(without hybridization) is a saddle point system 
with a large number of degrees of freedom. Furthermore, since all these methods utilize the following identity
\begin{align}
\label{biharmonic_identity}
\int_{\Omega} \Big(\frac{\partial^{2}u}{\partial x_{1}^{2}} \frac{\partial^{2} v}{\partial x_{2}^{2}} 
+ \frac{\partial^{2}u}{\partial x_{2}^{2}} \frac{\partial^{2} v}{\partial x_{1}^{2}} 
- 2 \frac{\partial^{2}u}{\partial x_{1}\partial x_{2}} \frac{\partial^{2} v}{\partial 
x_{1}\partial x_{2}}\Big)\; d\bfx = 0 
\qquad \forall u, v \in H_{0}^{2}(\Omega),
\end{align} 
it may not be straightforward to generalize these methods for problems with a non-constant coefficient $\kappa$. 
One way is to split the biharmonic term $\Delta (\kappa \Delta u)$ as 
\begin{align}
\label{operator_split1}
\Delta (\kappa \Delta u) = \underline{\kappa} \Delta^{2} u +  \Delta ((\kappa - \underline{\kappa})\Delta u),  
\end{align}
where $\underline{\kappa}$ is a positive constant chosen to satisfy
\begin{align*}
\underline{\kappa} \leq \inf_{\bfx \in \Omega} \kappa (\bfx). 
\end{align*}
Applying (\ref{biharmonic_identity}) to the first term on the right hand side of (\ref{operator_split1}), it is easy to see that 
the solution $u$ of the biharmonic equation (\ref{equ_biharmonic}) satisfies  
\begin{align}
\label{biharmonic_identity_var}
\dfrac{\underline{\kappa}}{2} (D^{2}u, D^{2}v)_{\Omega} + ( (\kappa - \underline{\kappa})\Delta u, \Delta v)_{\Omega} 
= (f ,v)_{\Omega},\qquad \forall v \in H_{0}^{2}(\Omega).
\end{align}
Based on the variational formula (\ref{biharmonic_identity_var}), all HHJ type methods can be generalized for 
variable coefficient $\kappa$. However, the value of $\underline{\kappa}$ may affect the stability 
of HHJ type methods, if $\underline{\kappa}$ is chosen to be much smaller than 
$\inf_{\bfx \in \Omega} \kappa (\bfx)$ which may not be easy to discover in practice.

\item {\it Numerical methods approximating $u$ only}. There are several sub-classes of numerical methods 
approximating $u$ only. These methods can produce symmetric and positive definite linear system but 
with condition number of order $O(h^{-4})$ (the same as our mixed finite element scheme). 
One class uses $C^{1}$-conforming finite element spaces, which are naturally suitable for the biharmonic equation 
\cite{Argyris1968, Bogner1965, Douglas1979} and the von K\'arm\'an equations \cite{Brezzi1978, Miyoshi1976}. 
The main drawback of $C^{1}$-conforming elements is the difficulty of implementation, especially in high dimensional 
domains with high polynomial orders. In order to simplify the implementation of $C^{1}$-conforming elements, 
several kinds of non $C^{1}$-conforming numerical methods have been developed and analyzed. These include 
Morley element methods \cite{Morley1968, Wang06},  $C^{0}$- interior penalty method \cite{Brenner2005, GGN2013}, 
and DG methods \cite{Engel02, Mozolevski03}. All of these methods 
can be applied for the von K\'arm\'an equations (\ref{equ_von_karman}) 
(see \cite{Brenner2017, Carstensen19, Mallik16}). Morley element methods are very popular since these schemes 
use only six degrees of freedom on each element in two dimensional domains and don't need any stabilization 
along mesh interfaces. However, it is not straightforward to use Morley element for equations including both 
the biharmonic and Laplacian operator due to the fact that it is not $H^{1}$-conforming on the whole mesh. 
Compared with $C^{0}$ interior penalty method and DG methods,  our methods might be 
understood and implemented relatively easier by beginners, since there is no stabilization along mesh interfaces. 
All these methods can be easily modified for variable coefficient $\kappa$. 
\end{itemize}

In \cite{SZhang2018}, the biharmonic equation (\ref{equ_biharmonic}) is deduced to an equivalent system on 
three low-regularity spaces which are connected by a regular decomposition corresponding to a decomposition 
of the regularity of the high order space. A numerical method based on the equivalent system 
of (\ref{equ_biharmonic}) is presented in \cite{SZhang2018}, which can approximate solutions with low regularity 
well. But it may not be straightforward for beginners to understand and implement the method in \cite{SZhang2018}.

Our proposed mixed finite element scheme for the biharmonic equation (\ref{equ_biharmonic}) has 
the following properties. 
\begin{itemize}

\item[(1)] By using element-wise stabilization, our scheme doesn't involve any integration along mesh interfaces. 
Our scheme uses $H(\text{div})$-conforming $BDM_{k+1}$ or vector valued Lagrange element with 
order $k+1$ to approximate $\bfw = \nabla u$, and approximates $u$ with Lagrange element with 
order $k+2$ for any $k\geq 0$. Thus even beginners can relatively easily implement our scheme. 

\item[(2)] The corresponding linear system is positive definite. In fact, one method of our scheme 
produces symmetric and positive definite linear system.

\item[(3)] Our scheme can be used in arbitrary Lipschitz polyhedral domains and can approximate accurately 
solutions in $H^{2+\delta}(\Omega)$, where $\delta > \frac{1}{2}$. In fact, the numerical solution $u_{h}$ approximates 
the exact solution $u$ optimally in discrete $H^{2}$ norm and $L^{2}$ norm. We refer to 
Theorem~\ref{main_err1} and Theorem~\ref{thm_L2_con_biharmonic} for detailed description.

\item[(4)] The new method and its analysis can be generalized to nonlinear problem such as 
the von K\'arm\'an equations (\ref{equ_von_karman}). Our method (\ref{sta_method_vk}) for
the von K\'arm\'an equations  doesn't involve with any integration along mesh interfaces either.
We refer to Theorem~\ref{thm_sta_vk} and Theorem~\ref{thm_conv_von_karman} for the detailed 
description on the existence and uniqueness  of the numerical solution to the von K\'arm\'an equations 
and the optimal convergence. For the sake of  simplicity, our analysis for the von K\'arm\'an equations is 
based on the assumption that  $\Vert f\Vert_{H^{-1}(\Omega)}$ is small enough.  The success of generalization 
to the von K\'arm\'an equations is because the numerical solution $u_{h}$ of our scheme for the biharmonic 
equation satisfies the stability result:
\begin{align*}
\Vert u_{h}\Vert_{H^{1}(\Omega)} +\Vert u_{h}\Vert_{2,\calT_{h}} \leq C \Vert f\Vert_{H^{-1}(\Omega)}.
\end{align*}
Here $\Vert \cdot \Vert_{2, \calT_{h}}$ is the discrete $H^{2}$- semi norm defined in (\ref{discrete_h2}). 
With the above stability result, we can extend our analysis to isolated solutions of the von K\'arm\'an equations 
like existing works \cite{Brenner2017, Brezzi1978, Brezzi1981, Carstensen19, Mallik16, Miyoshi1976, Reinhart1982}.

\end{itemize}

In this paper, we provide a discrete $H^{2}$-norm stability (\ref{discrete_embedding_ineq2}) in 
Theorem~\ref{thm_discrete_embedding}: 
\begin{align*}
\Vert  v \Vert_{H^{1}(\Omega)}^{2} + \Vert v\Vert_{2, \calT_{h}}^{2}  
\leq C \left( \Vert \Delta v \Vert_{\calT_{h}}^{2} + \Sigma_{F\in \calE_{h}} h_{F}^{-1} \Vert \llbracket 
\nabla v \cdot \bfn \rrbracket \Vert_{0,F}^{2} \right), 
\quad \forall v \in V_{h}:= H_{0}^{1}(\Omega) \cap P_{k+2}(\calT_{h})  \text{ }  (k\geq 0), 
\end{align*}
which looks similar to the discrete Miranda–Talenti inequality \cite[($1.3$)]{Neilan2019a} (they have different 
boundary conditions). In Remark~\ref{remark_key_inequality_alternative}, we explain the fundamental difference 
between the proof of above inequality and \cite[($1.3$)]{Neilan2019a}. This inequality can help in analysis of 
the $C^{0}$ interior penalty method: to find $u_{h} \in V_{h}$ such that for any $v\in V_{h}$,
\begin{align}
\label{CIP1}
& (\kappa \Delta u_{h}, \Delta v)_{\calT_{h}} + \langle \Lb \kappa \Delta u_{h} \Rb, 
\llbracket  \nabla v \cdot  \bfn \rrbracket \rangle_{\calE_{h}}  
+ \langle \Lb \kappa \Delta v \Rb, 
\llbracket  \nabla u_{h} \cdot  \bfn \rrbracket \rangle_{\calE_{h}}  \\
\nonumber
& \qquad \qquad + \tau h^{-1}\langle \kappa \llbracket  \nabla u_{h} \cdot  \bfn \rrbracket, 
\llbracket  \nabla v \cdot  \bfn \rrbracket \rangle_{\calE_{h}} = (f, v)_{\calT_{h}}. 
\end{align}
Here, $\Lb \cdot \Rb$ and  $\llbracket   \cdot \rrbracket$ represent, respectively, the average and jump across 
the inter-element boundaries.
Obviously, (\ref{CIP1}) is a natural generalization of the $C^{0}$ interior penalty method in \cite{Brenner2005}. 
In addition, another discrete $H^{2}$-norm stability 
(\ref{discrete_embedding_ineq3}) in Theorem~\ref{thm_discrete_embedding}: 
\begin{align*}
\Vert \nabla \tilde{v}_{h} \Vert_{\calT_{h}}^{2} + \Vert \tilde{v}_{h}\Vert_{2, \calT_{h}}^{2}  
\leq C \left( \Vert \Delta \tilde{v}_{h} \Vert_{\calT_{h}}^{2} 
+ \Sigma_{F\in \calE_{h}} \big( h_{F}^{-1} \Vert \llbracket \nabla \tilde{v}_h \cdot \bfn \rrbracket \Vert_{0,F}^{2} 
+ h_{F}^{-3} \Vert \llbracket \tilde{v}_h \rrbracket \Vert_{0,F}^{2} \big) \right), 
 \forall \tilde{v}_{h} \in P_{k+2}(\calT_{h}), 
\end{align*}
will help to prove discrete $H^{2}$-norm stability of the DG methods in \cite{Mozolevski03}. 

We conclude this section with section wise description. Section 2 deals with our new  mixed type formulation.  
In section 3,  stability estimates are proved and {\it a priori} error estimates are established. 
As an application, the section 4 focuses on the von K\'arm\'an model and related error analysis using a generalization 
of the proposed method.  In Section \ref{sec:numer_example}, we give some numerical results to verify the efficiency 
of the proposed new schemes. Finally we provide a conclusion in Section \ref{sec:conclusion}.

\section{A new finite element method for the biharmonic equation}\label{sec_method_biharmonic}

This section deal with the formulation of our new mixed finite element scheme for the biharmonic equation (\ref{equ_biharmonic}). At the end of this section, we introduce the idea how to derive our scheme.

Let $\calT_h$ be the conforming triangulation of $\Omega$ made of shape-regular simplicial elements. 
We denote by $\calE_h$ the set of all faces $F$ of all elements $K\in \calT_h$, $\calE^0_h$ the set of interior 
faces of $\calT_h$, $ \calE^{\partial}_h$ the set of all faces $F$ on the boundary $\partial \Omega$, and set 
$\partial \calT_h:=\{\partial K:K\in \calT_h\}$.
For scalar-valued functions $\phi$ and $\psi$, we write
\[
 (\phi,\psi)_{\calT_h}:=\sum_{K\in\calT_h}(\phi,\psi)_K, \, \, \langle\phi,\psi\rangle_{\partial\calT_h}
 :=\sum_{K\in\calT_h}\langle\phi,\psi \rangle_{\partial K}.
\]
Here $(\cdot,\cdot)_D$ denotes the integral over
the domain $D\subset \Real^d$, and $\langle \cdot,\cdot \rangle_D$ denotes the
integral over $D\subset \Real^{d-1}$. When $D=\Omega$, we denote $(\cdot,\cdot) := (\cdot,\cdot)_\Omega$. 
For vector-valued functions, we write $(\bfphi,\bfpsi)_{\calT_h}:=\sum_{i=1}^{d} (\phi_i,\psi_i)_{\calT_h}$. 
We denote by $h_K$ the diameter of element $K \in \calT_h$ and set $h = \max_{K \in \calT_h}h_K$. 
For any face $F \in \calE_h$, $h_F$ stands for the diameter of $F$. For any interior face 
$F= \partial K \cap \partial K'$ in $\calE^0_h$, we denote by $\llbracket  \psi \rrbracket 
= (\psi|_K)|_F  - (\psi|_{K'})|_F $ the jump of a scalar function $\psi$ 
across $F$, and $\llbracket  \boldsymbol{\phi} \rrbracket = (\boldsymbol{\phi} _K \cdot \bfn_K)|_F  - 
(\boldsymbol{\phi}_{K'}\cdot \bfn_K)|_F $ the jump of a vector-valued function $\boldsymbol{\phi} $ across $F$. 
On a boundary face $F = \partial K \cap \partial \Omega$, we set $\llbracket  \psi \rrbracket = \psi$ 
and $\llbracket  \boldsymbol{\phi} \rrbracket = \boldsymbol{\phi}_K\cdot \bfn_K $.

Throughout the paper, we use the standard notations and definitions for Sobolev spaces 
(see, e.g. \cite{Adams1975}). To be more precise, let $\|\cdot\|_{s,D}$ be the usual norm on
the Sobolev space $H^s(D)$ and $\|\cdot\|_{L^p(D)}$ be the $L^p$-norm on $L^p(D)$. 
If $p=2$, we let $\|\cdot\|_{0,D}$ denote the $L^2$-norm on $L^2(D)$. Further, let
$H_{0}^{1}(\Omega):= \{v\in H^{1}(\Omega): v|_{\partial\Omega} = 0\}$, 
$H({\rm div},\Omega) := \{ \bfw \in [L^{2}(\Omega)]^{d}: \nabla\cdot \bfw \in L^{2}(\Omega) \}$ and
$H_{0}({\rm div},\Omega) := \{ \bfw \in H({\rm div},\Omega): \bfw\cdot \bfn |_{\partial\Omega} = 0\}$. 

The norm $\|\cdot\|_{\calT_h}$ is the discrete norm defined as $\|\cdot\|_{\calT_h}:= (\sum_{K \in \calT_h}
\|\cdot\|^2_{L^{2}(K)})^{\frac{1}{2}}$. We also define the discrete norm $\|\cdot\|_{ \calE_h}:= (\sum_{F \in  
\calE_h} \|\cdot\|^2_{0,F})^{\frac{1}{2}}$. For any set $\calF_h \subseteq \partial \calT_h$, we denote 
$\|h^\alpha \cdot\|_{\calF_h} := ( \sum_{F \in \calF_h}h^{2\alpha}_F \|  
\cdot \|^2_{0,F})^{\frac{1}{2}}$ with optional parameter $\alpha$. 

We also define a semi-norm $\Vert \cdot \Vert_{2,\calT_{h}}$ on $H^{2}(\calT_{h})$ as 
\begin{align}
\label{discrete_h2}
\Vert v \Vert_{2,\calT_{h}}^{2} = \Vert D^{2} v \Vert_{\calT_{h}}^{2} 
+ \Sigma_{F \in \calE_{h}} h_{F}^{-1}\Vert \llbracket \nabla v\cdot \bfn \rrbracket \Vert_{0,F}^{2}, 
\qquad \forall v \in H^{2}(\calT_{h}).
\end{align}

In this paper, $C$ denotes a positive constant depending only on the property of $\Omega$, the shape regularity of the 
meshes and the degree of polynomial spaces. The constant $C$ can take on different values in different occurrences.

In the following we present the detailed formulation of the mixed finite element scheme for (\ref{equ_biharmonic}). 
For any $k\geq 0$, we define the finite element spaces
\begin{align}
\label{FEM_spaces}
\bfW_h := H_0({\rm div},\Omega) \cap \bfP_{k+1}(\calT_h),\quad  V_h := H^1_0(\Omega) \cap P_{k+2}(\calT_h),
\end{align}
where $P_k(D)$ denotes the set of polynomials of total degree at most $k$ defined on $D$, $\bfP_k(D)$ denotes the set 
of vector-valued functions whose $d$ components lie in $P_k(D)$. We would like point out that 
an alternative choice of $\bfW_{h}$ is  
\begin{align}
\label{Wh_alternative}
\bfW_h := [H_{0}^{1}(\Omega) \cap P_{k+1}(\calT_h)]^{d}.
\end{align}
In Remark~\ref{remark_Wh}, we explain why we can use $\bfW_{h}$ in (\ref{Wh_alternative}) 
and what is its drawback. In this paper, our scheme focuses on the finite element spaces (\ref{FEM_spaces}). 

Our mixed finite element scheme is to seek an 
approximation $(\bfw_h,u_h)\in \bfW_h \times V_h$ such that for $\theta \in \{-1, 1\}$,
\begin{align}\label{sta_fem_biharmonic}
B_{\theta}((\bfw_h,u_h),(\bfeta,v)) =(f,v)_{\langle H^{-1}(\Omega), H_{0}^{1}(\Omega) \rangle}
\;\;\;\forall (\bfeta,v)\in \bfW_h \times V_h,
\end{align}
where
\begin{align}\label{def_bilinear}
B_{\theta}((\bfw_h,u_h),(\bfeta,v)) :=&
 \left(\kappa \nabla \cdot \bfw_h, \nabla \cdot \bfeta \right)_{\calT_h}  
 +\left(\nabla(\kappa \nabla \cdot \bfw_h), \bfeta - \nabla v \right)_{\calT_h} \\
 & +\theta \left( \bfw_h-\nabla u_h,  \nabla(\kappa \nabla \cdot \bfeta)   \right)_{\calT_h} 
  + \frac{\tau}{h^2}\left( \kappa (\bfw_h-\nabla u_h),  \bfeta - \nabla v \right)_{\calT_h}.\nn
\end{align}
Here, $\tau$ is a stabilization parameter to be determined later. 

\begin{rem}
\label{remark_Wh}
Notice that the boundary conditions (\ref{equ_biharmonic_dirichlet}, \ref{equ_biharmonic2}) imply that 
$\nabla u = \boldsymbol{0}$ on $\partial\Omega$. Thus the alternative $\bfW_{h}$ in (\ref{Wh_alternative}) 
is also a reasonable choice of finite element space to approximate $\nabla u$.  It can be easily shown that 
all main results (Theorem~\ref{thm_discrete_embedding}, Theorem~\ref{thm_main_sta}, Theorem~\ref{main_err1}, 
Theorem~\ref{thm_L2_con_biharmonic}, Theorem~\ref{thm_sta_vk}, Theorem~\ref{thm_conv_von_karman}) 
will still hold. The only drawback of using the alternative $\bfW_{h}$ in (\ref{Wh_alternative}) is that when 
the Dirichlet boundary conditions (\ref{equ_biharmonic_dirichlet}, \ref{equ_biharmonic2}) are not homogeneous, 
extra calculation is needed to obtain the value of $\nabla u$ on $\partial\Omega$. On the contrast, the original 
choice of $\bfW_h := H_0({\rm div},\Omega) \cap \bfP_{k+1}(\calT_h)$ can handle nonhomogeneous Dirichlet 
boundary data directly. 
\end{rem}

In the following, we always assume that
\begin{align}
\bfw = \nabla u \in \bfH^{1+\delta}(\Omega), \quad \nabla \cdot \bfw = \Delta u 
\in H^{\delta}(\Omega), \quad \delta > 1/2.
\label{reg_ass}
\end{align}

\subsection{Derivation of the finite element scheme (\ref{sta_fem_biharmonic})}

We introduce the idea how to derive the finite element scheme (\ref{sta_fem_biharmonic}) in the following. 
We temporarily assume the exact solution $u$ of (\ref{equ_biharmonic}) and $\kappa$ are smooth. 

For any $v \in V_{h}$, we have 
\begin{align*}
(\Delta (\kappa \Delta u), v)_{\Omega} = (f, v)_{\Omega}. 
\end{align*}
Doing integration by parts, we obtain 
\begin{align*}
- (\nabla (\kappa \Delta u), \nabla v)_{\Omega} = (f, v)_{\Omega},
\end{align*}
since $v = 0$ on $\partial\Omega$. 
Obviously, if we want to do integration by parts one more time, we will encounter terms along mesh 
interfaces since $\nabla v$ is not $H(\text{div})$-conforming. We take $\bfeta \in \bfW_h$ arbitrarily. 
We would like to ``replace" $\nabla v$ by $\bfeta$ such that integration by parts can be done. So we have 
\begin{align*}
- (\nabla (\kappa \Delta u), \bfeta)_{\Omega}- (\nabla (\kappa \Delta u), \nabla v - \bfeta)_{\Omega} 
= (f, v)_{\Omega}.
\end{align*}
Now we can do integration by parts to the first term in the left hand side of above equation. 
Thus we have 
\begin{align*}
(\kappa \Delta u, \nabla \cdot \bfeta)_{\Omega} - (\nabla (\kappa \Delta u), \nabla v - \bfeta)_{\Omega} 
= (f, v)_{\Omega}.
\end{align*}
The above equation inspire us to use $(\bfw_{h}, u_{h}) \in \bfW_{h} \times V_{h}$ to approximate 
$(\nabla u, u)$:  
\begin{align*}
(\kappa\nabla\cdot \bfw_{h}, \nabla \cdot \bfeta)_{\Omega} - (\nabla (\kappa \nabla \cdot \bfw_{h}), 
\nabla v - \bfeta)_{\calT_{h}} = (f, v)_{\Omega}, \quad \forall (\bfeta, v) \in \bfW_{h} \times V_{h}.
\end{align*} 
In order to have well-posedness, we need $\bfw_{h}$ and $\nabla u_{h}$ are ``close" enough to each other. 
So we need to add stabilization into the above equation to have: 
\begin{align*}
(\kappa\nabla\cdot \bfw_{h}, \nabla \cdot \bfeta)_{\Omega} +(\nabla ( \kappa \nabla \cdot \bfw_{h}), 
\bfeta - \nabla v)_{\calT_{h}} + \frac{\tau}{h^2}\left( \kappa (\bfw_h-\nabla u_h),  \bfeta - \nabla v \right)_{\calT_h}
= (f, v)_{\Omega}, \quad \forall (\bfeta, v) \in \bfW_{h} \times V_{h}.
\end{align*} 
Here $\tau$ is a positive constant. In order to have a symmetric or anti-symmetric method, we 
add the term $\theta \left( \bfw_h-\nabla u_h,  \nabla(\kappa\nabla \cdot \bfeta)   \right)_{\calT_h}$ 
to the above equation to get the finite element scheme (\ref{sta_fem_biharmonic}). 

\section{Analysis of the mixed finite element scheme for the biharmonic equation}
In this section, we discuss the stability and error estimates of the mixed finite 
element scheme (\ref{sta_fem_biharmonic}) for the biharmonic equation (\ref{equ_biharmonic}).

\subsection{Stability estimate}

\begin{thm}
\label{thm_discrete_embedding}
There exists a positive constant $C$ such that for any $(\bfeta_{h}, v_{h}) \in \bfW_{h}\times V_{h}$, 
\begin{subequations}
\label{discrete_embedding_ineqs}
\begin{align}
\label{discrete_embedding_ineq1}
& \Vert  v_{h} \Vert_{H^{1}(\Omega)}^{2} + \Vert v_{h}\Vert_{2, \calT_{h}}^{2} 
\leq C \left( \Vert \nabla \cdot \bfeta_{h} \Vert_{\calT_{h}}^{2} 
+ \Sigma_{K \in \calT_{h}} h_{K}^{-2} \Vert \bfeta_{h} - \nabla v_{h} \Vert_{L^{2}(K)}^{2} \right), 
\quad \forall (\bfeta_{h}, v_{h}) \in \bfW_{h}\times V_{h}; \\
\label{discrete_embedding_ineq2}
& \Vert  v_{h} \Vert_{H^{1}(\Omega)}^{2} + \Vert v_{h}\Vert_{2, \calT_{h}}^{2}  
\leq C \left( \Vert \Delta v_{h} \Vert_{\calT_{h}}^{2} 
+ \Sigma_{F\in \calE_{h}} h_{F}^{-1} \Vert \llbracket \nabla v_h \cdot \bfn \rrbracket \Vert_{0,F}^{2} \right), 
\quad \forall v_{h} \in V_{h}; \\
\label{discrete_embedding_ineq3}
& \Vert \nabla \tilde{v}_{h} \Vert_{\calT_{h}}^{2} + \Vert \tilde{v}_{h}\Vert_{2, \calT_{h}}^{2}  
\leq C \left( \Vert \Delta \tilde{v}_{h} \Vert_{\calT_{h}}^{2} 
+ \Sigma_{F\in \calE_{h}} \big( h_{F}^{-1} \Vert \llbracket \nabla \tilde{v}_h \cdot \bfn \rrbracket \Vert_{0,F}^{2} 
+ h_{F}^{-3} \Vert \llbracket \tilde{v}_h \rrbracket \Vert_{0,F}^{2} \big) \right), 
 \forall \tilde{v}_{h} \in P_{k+2}(\calT_{h}).
\end{align}
\end{subequations}
\end{thm}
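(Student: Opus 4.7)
The plan is to establish the three inequalities in order, treating (\ref{discrete_embedding_ineq1}) as a master estimate and deducing (\ref{discrete_embedding_ineq2}) and (\ref{discrete_embedding_ineq3}) by substituting well-chosen test functions or enrichments.

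For (\ref{discrete_embedding_ineq1}), fix $(\bfeta_{h},v_{h})\in\bfW_h\times V_h$ and bound the left-hand side piece by piece. For $\Vert v_h\Vert_{H^1(\Omega)}$, I would split $\Vert\nabla v_h\Vert^{2}=(\nabla v_h-\bfeta_h,\nabla v_h)+(\bfeta_h,\nabla v_h)$, integrate the second inner product by parts globally (the boundary contribution vanishes since $\bfeta_h\cdot\bfn=0$ and $v_h=0$ on $\partial\Omega$, giving $(\bfeta_h,\nabla v_h)=-(\nabla\cdot\bfeta_h,v_h)$), and close with Cauchy--Schwarz, Poincar\'e, and Young. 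For the normal-jump part of $\Vert v_h\Vert_{2,\calT_h}$, the identity $\llbracket\bfeta_h\cdot\bfn\rrbracket=0$ (from $\bfeta_h\in H_0(\mathrm{div},\Omega)$) rewrites $\llbracket\nabla v_h\cdot\bfn\rrbracket=\llbracket(\nabla v_h-\bfeta_h)\cdot\bfn\rrbracket$, and a discrete trace inequality delivers the bound by $\sum_K h_K^{-2}\Vert\nabla v_h-\bfeta_h\Vert_K^{2}$.

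The key step is controlling $\Vert D^{2}v_h\Vert_{\calT_h}$. Writing $D^{2}v_h=\nabla(\nabla v_h-\bfeta_h)+\nabla\bfeta_h$, the first summand is absorbed into the gap term by an inverse inequality. For $\Vert\nabla\bfeta_h\Vert_{\calT_h}$, I would invoke the elementwise identity
\[
\Vert\nabla\bfeta_h\Vert_K^{2}=\Vert\nabla\cdot\bfeta_h\Vert_K^{2}+\Vert\curl\bfeta_h\Vert_K^{2}+\text{(boundary contributions on }\partial K\text{)}.
\]
Since $\nabla v_h$ is piecewise curl-free, $\curl\bfeta_h=\curl(\bfeta_h-\nabla v_h)$ is controlled by the gap via an inverse inequality. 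Summed over $\calT_h$, the element-boundary contributions collapse to integrals over mesh interfaces and over $\partial\Omega$; on interior faces they telescope thanks to $\llbracket\bfeta_h\cdot\bfn\rrbracket=0$ together with the single-valuedness of the tangential gradient $\nabla_{\tau}v_h$ (a consequence of $v_h\in H^1_0(\Omega)\cap C^0(\overline\Omega)$), and on $\partial\Omega$ they vanish by $\bfeta_h\cdot\bfn=0$, leaving a remainder bounded by the right-hand side of (\ref{discrete_embedding_ineq1}).

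To deduce (\ref{discrete_embedding_ineq2}), given $v_h\in V_h$ I would construct $\bfeta_h\in\bfW_h$ as a BDM-type projection whose normal-trace moments are the face-averages $\{\nabla v_h\cdot\bfn\}$ on interior faces and zero on boundary faces; the commuting diagram yields $\Vert\nabla\cdot\bfeta_h\Vert_{\calT_h}\leq\Vert\Delta v_h\Vert_{\calT_h}$, while a local Bramble--Hilbert argument gives $h_K^{-2}\Vert\bfeta_h-\nabla v_h\Vert_K^{2}\leq C\sum_{F\subset\partial K}h_F^{-1}\Vert\llbracket\nabla v_h\cdot\bfn\rrbracket\Vert_{0,F}^{2}$, and (\ref{discrete_embedding_ineq2}) follows by substitution into (\ref{discrete_embedding_ineq1}). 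For (\ref{discrete_embedding_ineq3}), I would deploy an Oswald-type enriching operator $E_h:P_{k+2}(\calT_h)\to V_h$ with the standard estimate
\[
\Vert\tilde v_h-E_h\tilde v_h\Vert_{2,\calT_h}^{2}+\Vert\nabla(\tilde v_h-E_h\tilde v_h)\Vert_{\calT_h}^{2}\leq C\sum_F\bigl(h_F^{-1}\Vert\llbracket\nabla\tilde v_h\cdot\bfn\rrbracket\Vert_{0,F}^{2}+h_F^{-3}\Vert\llbracket\tilde v_h\rrbracket\Vert_{0,F}^{2}\bigr),
\]
apply (\ref{discrete_embedding_ineq2}) to $E_h\tilde v_h$, and combine via a triangle inequality.

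The hard part will be the boundary-term analysis in the bound for $\Vert\nabla\bfeta_h\Vert_{\calT_h}^{2}$: a raw estimate $\Vert\nabla\bfeta_h\Vert\leq C\Vert\nabla\cdot\bfeta_h\Vert$ would imply a Miranda--Talenti inequality that is \emph{false} on non-convex Lipschitz polyhedra, so the argument must instead exploit the proximity of $\bfeta_h$ to the curl-free $\nabla v_h$ as a substitute for convexity of $\Omega$. This, I expect, is the point of departure from \cite[(1.3)]{Neilan2019a}: rather than invoking convexity to run a Miranda--Talenti identity on a smoothed version of $v_h$, we keep the gap $\bfeta_h-\nabla v_h$ explicitly on the right-hand side and use the gradient structure of the nearby target to neutralise the curl part of $\nabla\bfeta_h$, so the resulting stability estimate holds on arbitrary Lipschitz polyhedra.
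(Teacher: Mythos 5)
Your outer architecture is fine and in places legitimately inverts the paper's: the paper proves (\ref{discrete_embedding_ineq2}) first and obtains (\ref{discrete_embedding_ineq1}) from it by triangle, inverse and discrete trace inequalities, whereas you make (\ref{discrete_embedding_ineq1}) the master estimate and recover (\ref{discrete_embedding_ineq2}) by a BDM-type averaging of $\nabla v_h$ (your appeal to a commuting diagram is not literally available since $\nabla v_h\notin H({\rm div},\Omega)$, but a crude bound $\Vert\nabla\cdot\bfeta_h\Vert_{\calT_h}\le\Vert\Delta v_h\Vert_{\calT_h}+Ch^{-1}\Vert\bfeta_h-\nabla v_h\Vert_{\calT_h}$ suffices); your Oswald-operator treatment of (\ref{discrete_embedding_ineq3}) is exactly the paper's. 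The genuine gap is at the heart of your master estimate: the claim that the element-boundary contributions in the elementwise div--curl identity ``telescope'' across interior faces thanks to $\llbracket\bfeta_h\cdot\bfn\rrbracket=0$ and single-valuedness of $\nabla_{\bft}v_h$. They do not. The null-Lagrangian difference $|\nabla\bfeta|^2-(\nabla\cdot\bfeta)^2-|\curl\bfeta|^2$ is the divergence of a field \emph{quadratic} in $\bfeta$ and $\nabla\bfeta$, so after splitting $\bfeta_h=\nabla v_h+\bfdelta$ the two-sided face contributions leave residues of the schematic form $\int_F(\bfeta_h\cdot\bfn)\,\llbracket\partial_{\bft}(\bfdelta\cdot\bft)\rrbracket$, $\int_F\llbracket\bfdelta\cdot\bft\rrbracket\,\partial_{\bft}(\bfeta_h\cdot\bfn)$, and $\int_F\llbracket\nabla v_h\cdot\bfn\rrbracket\,\partial_{\bft\bft}v_h$. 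The last two families are handleable, but only by Young's inequality with absorption of $\epsilon\Vert\nabla\bfeta_h\Vert_{\calT_h}^2$ and $\epsilon\Vert D^2v_h\Vert_{\calT_h}^2$ into the \emph{left}-hand side, not, as you assert, by bounding the remainder by the right-hand side alone. The first family is worse: trace and inverse inequalities give $\bigl(h_K^{-1}\Vert\bfeta_h\Vert_{L^2(K)}\bigr)\bigl(h_K^{-1}\Vert\bfdelta\Vert_{L^2(K)}\bigr)$, and the zeroth-order factor $h_K^{-1}\Vert\bfeta_h\Vert_{L^2(K)}$ is controlled by nothing on either side of (\ref{discrete_embedding_ineq1}); repairing it by integrating by parts along $F$ produces vertex (in $3$D, edge) terms, which is precisely where such elementwise identities become delicate. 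In addition, for $d\ge3$ your identity needs the full antisymmetric-gradient (exterior-derivative) formulation of ``curl,'' which the sketch leaves unspecified. So the core of the proof is missing, not merely unwritten.

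For comparison, the paper sidesteps every face term. Since $v_h\in H^1_0(\Omega)$, the full jump of $\nabla v_h$ coincides with its normal jump, so the Karakashian--Pascal averaging operator \cite[Theorem~$2.2$]{KP2003} yields $\tilde{\bfeta}_h\in[H^1_0(\Omega)\cap P_{k+1}(\calT_h)]^d$ with $\Vert\tilde{\bfeta}_h-\nabla v_h\Vert_{\calT_h}^2\le C\sum_{F\in\calE_h}h_F\Vert\llbracket\nabla v_h\cdot\bfn\rrbracket\Vert_{0,F}^2$. For a field vanishing on $\partial\Omega$, treated as a $1$-form, the identity $\Vert\tilde{\bfeta}_h\Vert_{H^1(\Omega)}\le C\bigl(\Vert\boldsymbol{d}_1^*\tilde{\bfeta}_h\Vert_{L^2(\Omega)}+\Vert\boldsymbol{d}_1\tilde{\bfeta}_h\Vert_{L^2(\Omega)}\bigr)$ holds \emph{exactly globally}, on any Lipschitz domain and in any dimension, with no boundary bookkeeping whatsoever; since $\nabla v_h$ is piecewise curl-free, triangle plus inverse inequalities give (\ref{discrete_embedding_ineq2}), and (\ref{discrete_embedding_ineq1}) follows. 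Your closing intuition --- that proximity to the curl-free gradient and the boundary penalization substitute for convexity, in contrast to \cite{Neilan2019a} --- correctly identifies the mechanism; but to exploit it one must first pass to a globally $H^1_0$-conforming field and only then invoke the div--curl structure. Kept elementwise, the quadratic face bookkeeping you gesture at is exactly the obstruction your sketch does not overcome.
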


\begin{rem}
\label{remark_key_inequality_alternative}
Though (\ref{discrete_embedding_ineq2}) looks similar to the discrete Miranda–Talenti inequality 
\cite[($1.3$)]{Neilan2019a} (they have different boundary conditions), their proofs are fundamentally 
different from ours. An enriching operator \cite[Lemma~$3$]{Neilan2019a} for $H^{2}$-conforming Clough–Tocher 
elements is used to obtain \cite[($1.3$)]{Neilan2019a}. Since there is no Clough–Tocher elements with 
polynomial order greater than $3$ in three or higher dimensional domains, \cite[($1.3$)]{Neilan2019a} 
is valid for polynomial order less than $3$ in three dimensional domains. If we mimic the methodology in 
\cite{Neilan2019a}, our result (\ref{discrete_embedding_ineq2}) should have the same restriction 
on polynomial order as \cite[($1.3$)]{Neilan2019a}. However, our proof of (\ref{discrete_embedding_ineq2}) 
treats $\nabla v_{h}$ as $1$-form and uses standard enriching operator for vector valued $H^{1}$-conforming 
elements. Thus in (\ref{discrete_embedding_ineq2}), there is no restriction on the dimension of domains and 
order of polynomial.    
\end{rem}

\begin{proof}
By triangle inequality and discrete inverse inequality, 
\begin{align*}
 \Vert \Delta v_{h} \Vert_{\calT_{h}}^{2} 
\leq & C \left( \Vert \nabla\cdot (\bfeta_{h} - \nabla v_{h})  \Vert_{\calT_{h}}^{2}
 + \Vert \nabla\cdot \bfeta_{h} \Vert_{\calT_{h}}^{2} \right) \\
 \leq & C \left( \Vert \nabla \cdot \bfeta_{h} \Vert_{\calT_{h}}^{2} 
+ \Sigma_{K \in \calT_{h}} h_{K}^{-2} \Vert \bfeta_{h} - \nabla v_{h} \Vert_{L^{2}(K)}^{2} \right). 
\end{align*}
By discrete trace inequality and the fact that $\bfeta_{h} \in H_{0}(\text{div},\Omega)$, 
\begin{align*}
\sum_{F\in \calE_{h}} h_{F}^{-1} \Vert \llbracket \nabla v_h \cdot \bfn \rrbracket  \Vert_{0,F}^{2} 
= \sum_{F\in \calE_{h}} h_{F}^{-1} \Vert \llbracket (\bfeta - \nabla v_h ) \cdot \bfn \rrbracket  \Vert_{0,F}^{2} 
\leq C \Sigma_{K \in \calT_{h}} h_{K}^{-2} \Vert \bfeta_{h} - \nabla v_{h} \Vert_{L^{2}(K)}^{2}.
\end{align*}
Combing the above two inequalities, we have 
\begin{align}
\label{thm_discrete_embedding_tmp_ineq1}
\Vert \Delta v_{h} \Vert_{\calT_{h}}^{2} + \sum_{F\in \calE_{h}} h_{F}^{-1} \Vert \llbracket \nabla v_h \cdot 
\bfn \rrbracket  \Vert_{0,F}^{2} \leq C \left( \Vert \nabla \cdot \bfeta_{h} \Vert_{\calT_{h}}^{2} 
+ \Sigma_{K \in \calT_{h}} h_{K}^{-2} \Vert \bfeta_{h} - \nabla v_{h} \Vert_{L^{2}(K)}^{2} \right). 
\end{align}

Since $v_{h} \in H_{0}^{1}(\Omega)$, the tangential components of $\nabla v_{h}$ are continuous across 
interior mesh interfaces and vanish along the boundary $\partial\Omega$. Thus 
\begin{align*}
\sum_{F\in \calE_{h}} h_{F}^{-1} \Vert \llbracket \nabla v_h \cdot \bfn \rrbracket  \Vert_{0,F}^{2} 
= \sum_{F\in \calE_{h}} h_{F}^{-1} \Vert \llbracket \nabla v_h \rrbracket  \Vert_{0,F}^{2}, 
\end{align*}
where $\llbracket \nabla v_h \rrbracket|_{F}$ is the jump of all components of $\nabla v_{h}$ across  
interior mesh interface $F$ and $\llbracket \nabla v_h \rrbracket|_{F} = \nabla v_{h}|_{F}$ for 
any face $F$ on $\partial\Omega$. According to \cite[Theorem~$2.2$]{KP2003} and the above equality, 
there is $\tilde{\bfeta}_{h} \in [ H_{0}^{1}(\Omega) \cap P_{k+1}(\calT_{h}) ]^{d}$ such that 
\begin{align}
\label{thm_discrete_embedding_tmp_ineq2}
\Vert \tilde{\bfeta}_{h} - \nabla v_{h}\Vert_{\calT_{h}}^{2} \leq C \sum_{F\in \calE_{h}} h_{F}
\Vert \llbracket \nabla v_h \rrbracket  \Vert_{0,F}^{2}  \leq C \sum_{F\in \calE_{h}} h_{F}
\Vert \llbracket \nabla v_h \cdot \bfn \rrbracket  \Vert_{0,F}^{2} . 
\end{align}
We would like to point out that though it only deals with $d=2,3$, the proof of \cite[Theorem~$2.2$]{KP2003} 
can be easily generalized for arbitrary dimension $d \in \mathbb{N}$. Thus 
(\ref{thm_discrete_embedding_tmp_ineq2}) is valid for any dimension  $d \in \mathbb{N}$.

Obviously, $\tilde{\bfeta}_{h}$ is a $1$-form on $\Omega$ and therefore (cf. \cite{Arnold2006}), 
\begin{align*}
- \Delta \tilde{\bfeta}_{h} = \boldsymbol{d}_{0} \boldsymbol{d}_{1}^{*}\tilde{\bfeta}_{h} 
+ \boldsymbol{d}_{2}^{*} \boldsymbol{d}_{1} \tilde{\bfeta}_{h} \in H^{-1} \Lambda^{1}(\Omega), 
\end{align*}
where for any $1\leq k \leq d$, $\boldsymbol{d}_{k}$ is the exterior derivative mapping from $k$-form to 
$(k+1)$-form and $\boldsymbol{d}_{k}^{*} = (-1)^{d(k+1)+1} *\boldsymbol{d}_{d-k} *$ maps from $k$-form 
to $(k-1)$-form. Here $*$ is the Hodge star operator. 
 In fact, $\boldsymbol{d}_{0} = \nabla$ and $\boldsymbol{d}_{1}^{*} = - \nabla\cdot$. 
If $d=3$, $\boldsymbol{d}_{1} = \boldsymbol{d}_{2}^{*} = \nabla \times$. 
Then for any $\boldsymbol{\phi} \in C_{0}^{\infty}\Lambda^{1}(\Omega)$, 
\begin{align*}
 - (\Delta \tilde{\bfeta}_{h}, \boldsymbol{\phi})_{\Omega}  
= (\boldsymbol{d}_{1}^{*} \tilde{\bfeta}_{h}, \boldsymbol{d}_{1}^{*}\boldsymbol{\phi})_{\Omega} 
+ (\boldsymbol{d}_{1}\tilde{\bfeta}_{h}, \boldsymbol{d}_{1}\boldsymbol{\phi})_{\Omega}.
\end{align*}
Since $\Vert \boldsymbol{d}_{1}^{*}\boldsymbol{\phi} \Vert_{L^{2}(\Omega)} \leq 
C \Vert \boldsymbol{\phi}\Vert_{H^{1}(\Omega)}$ and $\Vert \boldsymbol{d}_{1}\boldsymbol{\phi} 
\Vert_{L^{2}(\Omega)} \leq C \Vert \boldsymbol{\phi} \Vert_{H^{1}(\Omega)}$, the above identity implies 
\begin{align*}
\Vert \Delta \tilde{\bfeta}_{h} \Vert_{H^{-1}(\Omega)} \leq C \left( \Vert \boldsymbol{d}_{1}^{*} 
\tilde{\bfeta}_{h}\Vert_{L^{2}(\Omega)} + \Vert \boldsymbol{d}_{1}\tilde{\bfeta}_{h}\Vert_{L^{2}(\Omega)} \right).
\end{align*}
Since $\tilde{{\bfeta}_{h}} = \boldsymbol{0}$ on $\partial\Omega$, the above inequality implies 
\begin{align*}
\Vert \tilde{\bfeta}_{h} \Vert_{H^{1}(\Omega)}  \leq C \left( \Vert \boldsymbol{d}_{1}^{*} 
\tilde{\bfeta}_{h}\Vert_{L^{2}(\Omega)} + \Vert \boldsymbol{d}_{1}\tilde{\bfeta}_{h}\Vert_{L^{2}(\Omega)} \right).
\end{align*}
By triangle inequality and the fact that $\boldsymbol{d}_{0} = \nabla$ and 
$\boldsymbol{d}_{1}^{*} = - \nabla\cdot$, we arrive at 
\begin{align}
\label{thm_discrete_embedding_tmp_ineq3}
& \Vert \tilde{\bfeta}_{h} \Vert_{H^{1}(\Omega)}  \leq C \left( \Vert \boldsymbol{d}_{1}^{*} 
\tilde{\bfeta}_{h}\Vert_{L^{2}(\Omega)} + \Vert \boldsymbol{d}_{1}\tilde{\bfeta}_{h}\Vert_{L^{2}(\Omega)} \right) \\
\nonumber
 \leq &  C \left( \Vert \boldsymbol{d}_{1}^{*} 
(\tilde{\bfeta}_{h} - \nabla v_{h})\Vert_{\calT_{h}} + \Vert \boldsymbol{d}_{1}(\tilde{\bfeta}_{h} - \nabla v_{h})
\Vert_{\calT_{h}} + \Vert  \boldsymbol{d}_{1}^{*} (\nabla v_{h}) \Vert_{\calT_{h}} 
+ \Vert  \boldsymbol{d}_{1} (\nabla v_{h}) \Vert_{\calT_{h}}  \right)  \\ 
\nonumber
= & C \left( \Vert \boldsymbol{d}_{1}^{*} 
(\tilde{\bfeta}_{h} - \nabla v_{h})\Vert_{\calT_{h}} + \Vert \boldsymbol{d}_{1}(\tilde{\bfeta}_{h} - \nabla v_{h})
\Vert_{\calT_{h}} + \Vert \Delta v_{h} \Vert_{\calT_{h}}  \right) \\
\nonumber
\leq & C \left( \Sigma_{K \in \calT_{h}} h_{K}^{-1} \Vert \tilde{\bfeta}_{h} - \nabla v_{h}\Vert_{\calT_{h}} 
 + \Vert \Delta v_{h} \Vert_{\calT_{h}}  \right). 
\end{align}
We utilized discrete inverse inequality to obtain the above inequality. 

By (\ref{thm_discrete_embedding_tmp_ineq2}) and (\ref{thm_discrete_embedding_tmp_ineq3}), we obtain 
(\ref{discrete_embedding_ineq2}). (\ref{discrete_embedding_ineq1}) is an immediate application of 
(\ref{discrete_embedding_ineq2}) and (\ref{thm_discrete_embedding_tmp_ineq1}).  

Now we want to prove (\ref{discrete_embedding_ineq3}). 
Like (\ref{thm_discrete_embedding_tmp_ineq2}), there is $v_{h} \in 
H_{0}^{1}(\Omega) \cap P_{k+2}(\calT_{h})$ such that 
\begin{align*}
\Vert \tilde{v}_{h} - v_{h}\Vert_{\calT_{h}}^{2} \leq C \Sigma_{F \in \calE_{h}} 
h_{F}\Vert \llbracket  v_h \rrbracket  \Vert_{0,F}^{2}.  
\end{align*}
The above inequality with (\ref{discrete_embedding_ineq2}) yields (\ref{discrete_embedding_ineq3}) 
and this concludes the rest of the proof.
\end{proof}
  
Below,  we state the stability estimate for the mixed finite element scheme (\ref{sta_fem_biharmonic}).
\begin{thm}\label{thm_main_sta}
For the solution $(\bfw_h,u_h)$ of the mixed finite element scheme (\ref{sta_fem_biharmonic}) with
$\theta = 1$ if the stabilization parameter $\tau$ is chosen to be large enough, then, there exists 
a positive constant $C$ independent of $h$ and penalty parameter $\tau$
 such that
\begin{align}
\label{main_sta_est}
\|\bfw_h\|_{H({\rm div},\Omega)}+ \|u_{h}\|_{H^{1}(\Omega)} + \Vert u_{h}\Vert_{2,\calT_{h}} 
\leq C \|f\|_{H^{-1}(\Omega)}.
\end{align}
When $\theta=-1$, the stability estimate (\ref{main_sta_est}) holds for any $\tau>0$. 
\end{thm}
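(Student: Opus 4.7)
The plan is to test the bilinear form against $(\bfeta,v)=(\bfw_h,u_h)$ to produce an energy expression, and then apply the discrete embedding of Theorem~\ref{thm_discrete_embedding}, especially inequality (\ref{discrete_embedding_ineq1}), to pass from control of $\|\nabla\cdot\bfw_h\|_{\calT_h}$ and $h^{-1}\|\bfw_h-\nabla u_h\|_{\calT_h}$ to control of $\|u_h\|_{H^1(\Omega)}^2+\|u_h\|_{2,\calT_h}^2$. After that, $\|\bfw_h\|_{L^2(\Omega)}$ is handled by writing $\bfw_h=(\bfw_h-\nabla u_h)+\nabla u_h$, and $\|\nabla\cdot\bfw_h\|_{L^2(\Omega)}$ is already in hand, yielding the full $H(\mathrm{div})$-norm bound.

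Testing with $(\bfw_h,u_h)$ and using the uniform bounds $\kappa_0\le \kappa\le \kappa_1$, one obtains
\[
B_\theta((\bfw_h,u_h),(\bfw_h,u_h)) = \|\kappa^{1/2}\nabla\cdot\bfw_h\|_{\calT_h}^2 + \frac{\tau}{h^2}\|\kappa^{1/2}(\bfw_h-\nabla u_h)\|_{\calT_h}^2 + (1+\theta)(\nabla(\kappa\nabla\cdot\bfw_h),\bfw_h-\nabla u_h)_{\calT_h}.
\]
For $\theta=-1$ the cross term drops out and the two remaining terms are already the desired coercive quantities, valid for any $\tau>0$. For $\theta=1$, the cross term is the main obstacle; I would absorb it as follows. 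Since $\kappa\in W^{1,\infty}(\Omega)$, an element-wise inverse inequality gives
\[
\|\nabla(\kappa\nabla\cdot\bfw_h)\|_K \leq C\bigl(h_K^{-1}+\|\nabla\kappa\|_{L^\infty}\bigr)\|\nabla\cdot\bfw_h\|_K,
\]
so by Cauchy--Schwarz and Young's inequality,
\[
\bigl|2(\nabla(\kappa\nabla\cdot\bfw_h),\bfw_h-\nabla u_h)_{\calT_h}\bigr| \le \varepsilon\,\sum_{K} h_K^{-2}\|\bfw_h-\nabla u_h\|_{L^2(K)}^2 + \frac{C}{\varepsilon}\,\|\nabla\cdot\bfw_h\|_{\calT_h}^2.
\]
Choosing $\varepsilon$ small and then $\tau$ large enough (depending on $\varepsilon$, $\kappa_0$, $\kappa_1$, $\|\nabla\kappa\|_{L^\infty}$, shape regularity, and polynomial degree), the cross term is absorbed, leaving
\[
\|\nabla\cdot\bfw_h\|_{\calT_h}^2 + \frac{\tau}{h^2}\|\bfw_h-\nabla u_h\|_{\calT_h}^2 \leq C\,B_\theta((\bfw_h,u_h),(\bfw_h,u_h)) = C(f,u_h).
\]

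Now I would invoke (\ref{discrete_embedding_ineq1}) with $(\bfeta_h,v_h)=(\bfw_h,u_h)$ to conclude
\[
\|u_h\|_{H^1(\Omega)}^2 + \|u_h\|_{2,\calT_h}^2 \leq C\bigl(\|\nabla\cdot\bfw_h\|_{\calT_h}^2 + \textstyle\sum_K h_K^{-2}\|\bfw_h-\nabla u_h\|_{L^2(K)}^2\bigr) \leq C(f,u_h).
\]
Bounding $(f,u_h)\le \|f\|_{H^{-1}(\Omega)}\|u_h\|_{H^1(\Omega)}$ and using Young's inequality absorbs $\|u_h\|_{H^1(\Omega)}$ on the left and yields $\|u_h\|_{H^1(\Omega)}+\|u_h\|_{2,\calT_h}\le C\|f\|_{H^{-1}(\Omega)}$ together with $\|\nabla\cdot\bfw_h\|_{\calT_h}\le C\|f\|_{H^{-1}(\Omega)}$ and $h^{-1}\|\bfw_h-\nabla u_h\|_{\calT_h}\le C\|f\|_{H^{-1}(\Omega)}$. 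Finally,
\[
\|\bfw_h\|_{L^2(\Omega)} \leq \|\bfw_h-\nabla u_h\|_{\calT_h} + \|\nabla u_h\|_{L^2(\Omega)} \leq Ch\,\|f\|_{H^{-1}(\Omega)} + C\|u_h\|_{H^1(\Omega)} \leq C\|f\|_{H^{-1}(\Omega)},
\]
which, combined with the divergence bound, gives the full $H(\mathrm{div})$ estimate on $\bfw_h$ and completes (\ref{main_sta_est}). The only delicate point is the $\theta=1$ case, where the sharp interplay between the inverse-estimate constant and the stabilization threshold $\tau$ determines how large $\tau$ must be chosen.
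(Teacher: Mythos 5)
Your proposal follows the paper's proof step for step: test the scheme with $(\bfeta,v)=(\bfw_h,u_h)$, note that the cross term carries the factor $(1+\theta)$ (so it vanishes for $\theta=-1$, giving the result for any $\tau>0$), bound it for $\theta=1$ via the element-wise inverse inequality using $\kappa\in W^{1,\infty}$, absorb it by taking $\tau$ large, invoke (\ref{discrete_embedding_ineq1}) from Theorem~\ref{thm_discrete_embedding}, bound $(f,u_h)\le\|f\|_{H^{-1}(\Omega)}\|u_h\|_{H^1(\Omega)}$ and absorb by Young, and finish $\|\bfw_h\|_{\calT_h}$ by the triangle inequality $\bfw_h=(\bfw_h-\nabla u_h)+\nabla u_h$. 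This is exactly the structure of the paper's proof of Theorem~\ref{thm_main_sta}.

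However, there is one step that fails as written: your Young split of the cross term is allocated backwards. You wrote
\[
\bigl|2(\nabla(\kappa\nabla\cdot\bfw_h),\bfw_h-\nabla u_h)_{\calT_h}\bigr|
\le \varepsilon\sum_{K} h_K^{-2}\|\bfw_h-\nabla u_h\|_{L^2(K)}^2
+ \frac{C}{\varepsilon}\,\|\nabla\cdot\bfw_h\|_{\calT_h}^2,
\]
and proposed to take $\varepsilon$ small and then $\tau$ large. But the only coercive divergence term on the left-hand side is $\|\kappa^{1/2}\nabla\cdot\bfw_h\|_{\calT_h}^2\ge\kappa_0\|\nabla\cdot\bfw_h\|_{\calT_h}^2$, whose coefficient is fixed; for small $\varepsilon$ the contribution $\frac{C}{\varepsilon}\|\nabla\cdot\bfw_h\|_{\calT_h}^2$ exceeds it and no choice of $\tau$ can help, since $\tau$ multiplies the \emph{other} term. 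The small parameter must sit on the divergence term and the large constant on the $h^{-2}$-weighted term, which is then absorbed by the stabilization: from Cauchy--Schwarz and the inverse estimate,
\[
\bigl|2(\nabla(\kappa\nabla\cdot\bfw_h),\bfw_h-\nabla u_h)_{\calT_h}\bigr|
\le C_s\,\|\nabla\cdot\bfw_h\|_{\calT_h}\, h^{-1}\|\bfw_h-\nabla u_h\|_{\calT_h}
\le \frac{\kappa_0}{2}\,\|\nabla\cdot\bfw_h\|_{\calT_h}^2
+ \frac{C_s^2}{2\kappa_0}\, h^{-2}\|\bfw_h-\nabla u_h\|_{\calT_h}^2,
\]
after which it suffices to require $\tau\kappa_0$ comfortably larger than $C_s^2/(2\kappa_0)$ (e.g.\ twice the threshold, so that the surviving coefficients, and hence the final constant $C$, are independent of $\tau$, as the theorem asserts). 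This is precisely the paper's absorption: it bounds the cross term by $\frac{1}{2}\|\nabla\cdot\bfw_h\|^2_{\calT_h}+C_s^2h^{-2}\|\bfw_h-\nabla u_h\|^2_{\calT_h}$ and chooses $\tau>C_s^2$. With this swap, the remainder of your argument goes through verbatim.
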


\begin{proof}
Choose $\bfeta=\bfw_h$ and $v = u_h$ in (\ref{sta_fem_biharmonic}) to obtain for $\theta\neq -1$
\begin{align}
\label{est_sta_1}
\| \sqrt{\kappa} \nabla \cdot \bfw_h\|^2_{\calT_h} + \frac{\tau}{h^2}\;\|  \sqrt{\kappa} 
(\bfw_h-\nabla u_h)\|^2_{\calT_h} = 
(f,u_{h})_{\langle H^{-1}(\Omega), H_{0}^{1}(\Omega) \rangle}
- (1+\theta)\;\left(\nabla(\kappa \nabla \cdot \bfw_h), \bfw_h - \nabla u_h \right)_{\calT_h}.
\end{align}
For the second term on the right hand side of (\ref{est_sta_1}), a use of the  Cauchy-Schwarz inequality 
with the  inverse inequality and the fact that $\kappa \in W^{1, \infty}(\Omega)$  yields
\begin{align}
\label{est_sta_2}
(1+\theta) \left(\nabla(\kappa\nabla \cdot \bfw_h), \bfw_h - \nabla u_h \right)_{\calT_h}  
&\leq C \| \nabla(\kappa \nabla \cdot \bfw_h) \|_{\calT_h} \|\bfw_h - \nabla u_h\|_{\calT_h}\\
\nonumber
&\leq C_s \| \nabla \cdot \bfw_h\|_{\calT_h} h^{-1}\|\bfw_h - \nabla u_h\|_{\calT_h}\\
\nonumber
& \leq  \frac{1}{2}  \| \nabla \cdot \bfw_h\|^2_{\calT_h} + C_s^2  h^{-2}\|\bfw_h - \nabla u_h\|^2_{\calT_h}. 
\end{align}
Substituting (\ref{est_sta_2}) in (\ref{est_sta_1}),  choose $\tau$ large enough such that $\tau > C_s^2.$
Then, we obtain
\begin{align}
\label{est_sta_3}
 \| \nabla \cdot \bfw_h\|^2_{\calT_h}  + h^{-2}\|\bfw_h - \nabla u_h\|^2_{\calT_h} 
 \leq C \|f\|_{H^{-1}(\Omega)} \| \nabla u_h\|_{\calT_h}. 
\end{align}
From Theorem~\ref{thm_discrete_embedding}, it follows that
\begin{align}
\label{est_sta_4}
\Vert u_{h} \Vert_{H^{1}(\Omega)}^{2} +  \Vert u_{h}\Vert^2_{2, \calT_{h}}
& \leq C \left(  h^{-2}\|\bfw_h - \nabla u_h\|^2_{\calT_h}+ \| \nabla\cdot \bfw_h \|^2_{\calT_h}  \right). 
\end{align}
Combining (\ref{est_sta_3}, \ref{est_sta_4}), we  obtain
\begin{align}
\label{est_sta_5}
\|\nabla \cdot \bfw_h\|^2_{\calT_h} + \Vert u_{h} \Vert_{H^{1}(\Omega)}^{2} 
 + \Vert u_{h}\Vert^2_{2, \calT_{h}} \leq C \|f\|_{H^{-1}(\Omega)}\| \nabla u_h\|_{\calT_h}.
\end{align}
Now an application of the Cauchy-Schwarz inequality with (\ref{est_sta_5}) yields 
the part of estimate (\ref{main_sta_est}). To complete the rest of the estimate for for $\theta = 1,$
we note from (\ref{est_sta_3}) that 
\begin{align} \label{w-u-1}
\|\bfw_h - \nabla u_h\|_{\calT_h}  \leq C h \|f\|_{H^{-1}(\Omega)}.
\end{align}
Since 
$$\|\bfw_h\|_{\calT_h}\leq \|\bfw_h - \nabla u_h\|_{\calT_h} +  \|\nabla u_h\|_{\calT_h},$$
(\ref{est_sta_5}, \ref{w-u-1}) show the following 
stability estimate for $\bfw_h$ in $L^2$ norm as 
$$
\|\bfw_h\|_{\calT_h}  \leq C \|f\|_{H^{-1}(\Omega)}.
$$
This concludes the desired result  for $\theta = 1$.  When $\theta=-1$, 
the second term on the right hand side of (\ref{est_sta_1}) becomes 
zero and the rest of the proof follows as above for any $\tau>0.$ This completes rest of the proof.
\end{proof}

\subsection{Error estimates}\label{sec_err_b}
In this section, we present the detailed proof of the {\it a priori} error estimates for the 
mixed finite element scheme (\ref{sta_fem_biharmonic}). 

Now define
\[
\bfe_{\bfw} :=  \bfPi^{\rm div}_h\bfw - \bfw_h, \quad e_u :=  \pi_h u -u_h,
\]
where $\bfPi^{\rm div}_h: H_{0}({\rm div},\Omega) \rightarrow \bfW_h$ is the $H({\rm div})$-smooth projection and 
$\pi_h: H^1_0(\Omega)\rightarrow V_h$ is the $H^1$-smooth projection introduced in \cite{ChrisWinth08, Schob08b} 
(the interpolations in \cite{DB2005} can be used also). By the regularity assumption (\ref{reg_ass}), the following 
approximation properties hold true for the two projections $\bfPi^{\rm div}_h$ and $\pi_h$:
\begin{subequations}
\label{appros}
\begin{align}
\label{appro_1}
 \| \bfw - \bfPi^{\rm div}_h\bfw\|_{H({\rm div},\Omega)}  & + h^{\frac{1}{2}}\| \nabla \cdot( \bfw - \bfPi^{\rm div}_h\bfw  )
 \|_{\partial \calT_h} \leq C h^{s} \left( \| \bfw \|_{s,\Omega} + \| \nabla\cdot \bfw \|_{s,\Omega}\right), \\ 
 \label{appro_2}
& \| \bfw - \bfPi^{\rm div}_h\bfw\|_{\calT_h} + \| \nabla ( u - \pi_h u) \|_{\calT_h}  \leq Ch^{1+s} \|\bfw \|_{1+s,\Omega}, \\
 \label{appro_3}
& \Vert u - \pi_h u \Vert_{2, \calT_{h}}  \leq C h^{s} \| \bfw \|_{1+s,\Omega}, 
\end{align}
\end{subequations}
where $s =\min\{\delta, k+1\}$, $u$ is the solution of (\ref{equ_biharmonic}) and $\bfw = \nabla u$.

We are now ready to present the error equation for our subsequent  error analysis.
\begin{lem}\label{lem_err_bih}
Let $u$ and $(\bfw_h,u_h)$ be the solution of (\ref{equ_biharmonic}) and (\ref{sta_fem_biharmonic}), 
with $\bfw = \nabla u,$ respectively. Under the regularity assumption in (\ref{reg_ass}), there holds
\begin{align}\label{equ_err}
& B_{\theta} ( (\bfe_{\bfw},e_u), (\bfeta, v)) \\
& = -(\kappa \nabla \cdot( \bfw -  \bfPi^{\rm div}_h \bfw ),\nabla \cdot \bfeta )_{\calT_h}  
- \tau h^{-2} ( \bfw -  \bfPi^{\rm div}_h \bfw  - \nabla( u-\pi_h u ), \kappa(\bfeta - \nabla v)  )_{\calT_h} \nn\\
& \quad +( \kappa\nabla \cdot(\bfw -\bfPi^{\rm div}_h \bfw ) , \nabla \cdot(\bfeta - \nabla v) )_{\calT_h} 
- \langle \kappa\nabla \cdot (\bfw -\bfPi^{\rm div}_h \bfw), (\bfeta - \nabla v) \cdot 
\bfn \rangle_{\partial \calT_h}\nn\\
&\quad -\theta( \bfw -  \bfPi^{\rm div}_h \bfw -  \nabla( u-\pi_h u ), 
\nabla (\kappa\nabla \cdot \bfeta))_{\calT_h},\nn
\end{align}
for any $(\bfeta,v) \in \bfW_h \times V_h$.
\end{lem}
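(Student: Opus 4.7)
The plan is to reduce the error equation to a Galerkin consistency identity satisfied by the exact solution, modulo projection errors. Using the linearity of $B_\theta$ in its first slot together with $B_\theta((\bfw_h, u_h), (\bfeta, v)) = (f,v)$ from the scheme (\ref{sta_fem_biharmonic}), I would write
\begin{align*}
B_\theta((\bfe_{\bfw}, e_u), (\bfeta, v)) = B_\theta((\bfPi^{\rm div}_h\bfw - \bfw, \pi_h u - u), (\bfeta, v)) + \Big[\, B_\theta((\bfw, u), (\bfeta, v)) - (f, v) \,\Big].
\end{align*}
The strategy is to show that the bracketed term vanishes (Galerkin consistency), and then to unpack the first term on the right by applying elementwise integration by parts.

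For the consistency identity, the assumption $\bfw = \nabla u$ immediately annihilates the $\theta$-term and the stabilization term of $B_\theta$ in (\ref{def_bilinear}). Under the regularity assumption (\ref{reg_ass}) with $\delta > 1/2$ and $\kappa \in W^{1,\infty}(\Omega)$, both $\kappa\Delta u$ and $\nabla(\kappa\Delta u)$ have single-valued traces on faces, so the elementwise integrals collapse into global integrals over $\Omega$. Two successive integrations by parts, using $\bfeta \cdot \bfn = 0$ on $\partial\Omega$ (from $\bfeta \in H_0({\rm div},\Omega)$) to cancel one boundary contribution and $v = 0$ on $\partial\Omega$ to cancel the other, give
\begin{align*}
(\kappa\nabla\cdot\bfw, \nabla\cdot\bfeta)_{\calT_h} + (\nabla(\kappa\nabla\cdot\bfw), \bfeta - \nabla v)_{\calT_h} = -(\nabla(\kappa\Delta u), \nabla v)_{\Omega} = (\Delta(\kappa\Delta u), v)_{\Omega} = (f, v),
\end{align*}
which is exactly the consistency identity.

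With consistency established, the remaining work is to expand $B_\theta((\bfPi^{\rm div}_h\bfw - \bfw, \pi_h u - u), (\bfeta, v))$ directly from definition (\ref{def_bilinear}). The first term and the last two terms of the expansion match those in (\ref{equ_err}) after a sign flip, where I use $\bfw = \nabla u$ to rewrite $(\bfPi^{\rm div}_h\bfw - \bfw) - \nabla(\pi_h u - u) = -\big[\,(\bfw - \bfPi^{\rm div}_h\bfw) - \nabla(u - \pi_h u)\,\big]$. The only substantive manipulation is on the surviving piece $(\nabla(\kappa\nabla\cdot(\bfPi^{\rm div}_h\bfw - \bfw)), \bfeta - \nabla v)_{\calT_h}$: elementwise integration by parts yields a volumetric term tested against $\nabla\cdot(\bfeta - \nabla v)$ plus a skeleton term against $(\bfeta - \nabla v)\cdot\bfn$ on $\partial \calT_h$, producing precisely the two middle terms of (\ref{equ_err}).

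No individual step is technically difficult; the main points requiring care are the sign tracking when converting between $\bfPi^{\rm div}_h\bfw - \bfw$ and $\bfw - \bfPi^{\rm div}_h\bfw$, and the justification that the global integrations by parts in the consistency step are legitimate, which hinges on the trace regularity of $\kappa\Delta u$ granted by $\delta > 1/2$ in (\ref{reg_ass}).
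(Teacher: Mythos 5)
Your algebraic skeleton --- add and subtract the exact solution, prove a consistency identity, then expand $B_{\theta}((\bfPi^{\rm div}_h\bfw-\bfw,\pi_h u - u),(\bfeta,v))$ with one elementwise integration by parts --- is the same as the paper's, and your sign bookkeeping is correct. The genuine gap is in the regularity justification, which is precisely where the technical content of the paper's proof lies. Under (\ref{reg_ass}) one only has $\kappa\Delta u\in H^{\delta}(\Omega)$ with $\delta>\frac12$, and the paper explicitly works in the regime $\delta\in(\frac12,1]$ (the relevant one on nonconvex Lipschitz polyhedra). Then $\nabla(\kappa\Delta u)\in [H^{\delta-1}(\Omega)]^{d}$ is merely a distribution of negative order: it is not in $L^{2}$, and your claim that ``$\nabla(\kappa\Delta u)$ ha[s] single-valued traces on faces'' is false. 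Consequently (a) the quantity $B_{\theta}((\bfw,u),(\bfeta,v))$ that you add and subtract is not defined as written, because the term $(\nabla(\kappa\nabla\cdot\bfw),\bfeta-\nabla v)_{\calT_h}$ is not an elementwise $L^{2}$ integral; and (b) the elementwise Green's formula you invoke on the surviving piece cannot be applied directly to the $\bfw$-part, since $\kappa\nabla\cdot\bfw|_{K}$ is only in $H^{\delta}(K)$, not $H^{1}(K)$. Your closing remark that legitimacy ``hinges on the trace regularity of $\kappa\Delta u$'' identifies the right hypothesis for the \emph{skeleton} term, but it does not repair the ill-defined volumetric pairing or the missing integration by parts.

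The paper circumvents this by never inserting the exact solution into $B_{\theta}$: it proves the consistency identity (\ref{consistent_biharmonic}) directly in already-integrated-by-parts form, in which every term is well defined because $\kappa\nabla\cdot\bfw\in H^{\delta}(\Omega)$, $\delta>\frac12$, has $L^{2}$ traces on $\partial\calT_h$ and pairs against piecewise polynomials in volume. The justification requires several steps you omit: extension of $v$ by zero to show $v\in H^{2-\delta}_{0}(\Omega)$ and density of $C_{0}^{\infty}(\Omega)$ there (to make sense of $(\Delta(\kappa\Delta u),v)$ and the first integration by parts); the identification $H^{1-\delta}(\Omega)=H_{0}^{1-\delta}(\Omega)$ from \cite[Theorem~$3.40$]{Mclean1}, so that $\langle\nabla(\kappa\nabla\cdot\bfw),\bfeta-\nabla v\rangle$ is a legitimate $[H^{\delta-1}(\Omega)]^{d}$--$[H_{0}^{1-\delta}(\Omega)]^{d}$ duality; and an approximation of $\kappa\nabla\cdot\bfw$ by smooth functions $\sigma_i$ with $\nabla\sigma_i\to\nabla(\kappa\nabla\cdot\bfw)$ in $H^{\delta-1}(\Omega)$ (via \cite[Theorem~$1.4.4.6$]{Grisvard1}), performing the elementwise Green's formula for each $\sigma_i$ and passing to the limit. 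Your argument would be essentially complete if one strengthened the hypothesis to $\delta>1$ (so that $\nabla(\kappa\Delta u)\in L^{2}$), but the lemma is asserted under (\ref{reg_ass}) alone, and handling $\delta\in(\frac12,1]$ is exactly what the density/duality machinery is for; as it stands, your proof has a genuine gap there.
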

\begin{proof}
We assume $\delta \in (\frac{1}{2}, 1]$ where $\delta$ is introduced in (\ref{reg_ass}). 
We choose $(\bfeta, v)\in \bfW_h \times V_h$ arbitrarily.

We define $\tilde{v}$ to be a function on $\mathbb{R}^{d}$ satisfying 
\begin{align*}
\tilde{v}(\boldsymbol{x}) & = v(\boldsymbol{x}) \qquad & \forall \boldsymbol{x} \in \Omega, \\
\tilde{v}(\boldsymbol{x}) & = 0 \qquad & \forall \boldsymbol{x} \in \mathbb{R}^{d} \setminus \Omega.
\end{align*}
Obviously, $\tilde{v} \in H^{1}(\mathbb{R}^{d})$. 
It is easy to verify that $\nabla \tilde{v} \in [ H^{1- \delta}(\mathbb{R}^{d}) ]^{d}$. 
Then, according to \cite[Theorem~$3.33$]{Mclean1}, we have that $v \in H^{2 - \delta}_{0}(\Omega)$ 
where $H^{2 - \delta}_{0}(\Omega)$ is the closure of $C_{0}^{\infty}(\Omega)$ with respect to 
$\Vert \cdot \Vert_{H^{2 - \delta}(\Omega)}$ (the standard norm of $H^{2-\delta}(\Omega)$). 
In addition, it is easy to verify that $\bfeta \in [ H^{1 - \delta}(\Omega) ]^{d}$.  

Since $\kappa \in W^{1,\infty}(\Omega)$ and $\nabla\cdot \bfw \in H^{\delta}(\Omega)$, 
then $\kappa \nabla\cdot \bfw \in H^{\delta}(\Omega)$. 
Thus, $-(\kappa\nabla\cdot \bfw, \nabla \cdot \bfeta)_{\calT_{h}} 
+ \langle \kappa\nabla \cdot \bfw, (\bfeta - \nabla v)\cdot \bfn \rangle_{\partial \calT_{h}}$ 
is well defined. By \cite[Theorem~$3.40$]{Mclean1}, $H^{1 - \delta}(\Omega) = H_{0}^{1 - \delta}(\Omega)$. 
Since $\bfeta, \nabla v \in [ H^{1 - \delta}(\Omega) ]^{d}$, 
$\langle \nabla (\kappa\nabla\cdot \bfw), \bfeta - \nabla v\rangle_{\Omega}$ is well defined 
where $\langle \nabla (\kappa\nabla\cdot \bfw), \bfeta - \nabla v\rangle_{\Omega}$ is the coupling 
between $[H^{\delta - 1}(\Omega)]^{d}$ and $[H_{0}^{1 - \delta}(\Omega)]^{d}$.
	
By (\ref{equ_biharmonic1}) and the fact that $f \in H^{-1}(\Omega)$, there holds
\begin{align*}
(\Delta (\kappa\nabla\cdot \bfw), v)_{\calT_{h}} = (f, v)_{\langle H^{-1}(\Omega), H_{0}^{1}(\Omega) \rangle}.
\end{align*}
We notice that for any $\bar{v}\in C_{0}^{\infty}(\Omega)$, 
\begin{align*}
(\Delta (\kappa\nabla\cdot \bfw), \bar{v})_{\calT_{h}} = 
- \left(\nabla (\kappa \nabla\cdot \bfw), \nabla \bar{v}\right)_{\Omega}.
\end{align*}
Since $C_{0}^{\infty}(\Omega)$ is dense in $H^{2 - \delta}_{0}(\Omega)$, then for 
$v\in H^{2 - \delta}_{0}(\Omega)$, it follows that 
\begin{align*}
(\Delta (\kappa\nabla\cdot \bfw), v)_{\calT_{h}} = - \left(\nabla (\kappa\nabla\cdot \bfw), \nabla v\right)_{\Omega} 
= -\left(\nabla (\kappa\nabla\cdot \bfw), \bfeta \right)_{\Omega} 
+ \left(\nabla (\kappa\nabla\cdot \bfw), \bfeta - \nabla v\right)_{\Omega} .
\end{align*}
We recall that $\kappa \nabla\cdot \bfw \in H^{\delta}(\Omega)$ since $\kappa \in W^{1,\infty}(\Omega)$. 
Let $\{ \sigma_{i} \}_{i=1}^{\infty} \subset D(\bar{\Omega}):= \{ \sigma: \sigma = \bar{\sigma}|_{\Omega} 
\text{ where } \bar{\sigma} \in C_{0}^{\infty}(\mathbb{R}^{d}) \}$ such that 
\begin{align*}
\Vert \sigma_{i} - \kappa\nabla\cdot \bfw \Vert_{H^{\delta}(\Omega)} \rightarrow 0 \text{ as } i\rightarrow \infty.
\end{align*}
Then from  \cite[Theorem~$1.4.4.6$]{Grisvard1}, we arrive at 
\begin{align}
\label{negative_appr1}
\Vert \nabla (\sigma_{i} - \kappa\nabla\cdot \bfw) \Vert_{H^{\delta-1}(\Omega)} 
\rightarrow 0 \text{ as } i\rightarrow \infty.
\end{align}
For any $i \geq 1$, 
\begin{align*}
-\left(\nabla \sigma_{i}, \bfeta \right)_{\Omega} + \left(\nabla \sigma_{i}, \bfeta - \nabla v\right)_{\Omega} 
= ( \sigma_{i}, \nabla \cdot \bfeta)_{\calT_{h}} - (\sigma_{i}, \nabla\cdot (\bfeta - \nabla v))_{\calT_{h}} 
+ \langle \sigma_{i}, (\bfeta - \nabla v) \cdot \bfn \rangle_{\partial \calT_{h}}.
\end{align*}
Then, letting $i\mapsto \infty$, we apply  (\ref{negative_appr1}) and  integration by parts to obtain
\begin{align*}
(\Delta (\kappa\nabla\cdot \bfw), v)_{\calT_{h}} = (\kappa\nabla\cdot \bfw, \nabla \cdot \bfeta)_{\calT_{h}}
 - (\kappa\nabla\cdot \bfw, \nabla\cdot (\bfeta - \nabla v))_{\calT_{h}} 
+ \langle \kappa\nabla\cdot \bfw, (\bfeta - \nabla v) \cdot \bfn \rangle_{\partial \calT_{h}}
\end{align*}
Thus, we arrive at 
\begin{align}
\label{consistent_biharmonic}
 (\kappa\nabla\cdot \bfw, \nabla \cdot \bfeta)_{\calT_{h}} 
 - (\kappa\nabla\cdot \bfw, \nabla\cdot (\bfeta - \nabla v))_{\calT_{h}} 
+ \langle \kappa \nabla\cdot \bfw, (\bfeta - \nabla v) \cdot \bfn \rangle_{\partial \calT_{h}} 
= (f, v)_{\langle H^{-1}(\Omega), H_{0}^{1}(\Omega) \rangle}.
\end{align}
By subtracting (\ref{consistent_biharmonic}) from (\ref{sta_fem_biharmonic}), we immediately obtain 
the error equation (\ref{equ_err}) and this concludes the proof.
\end{proof}

Below, we present the main theorem of this section.
\begin{thm}\label{main_err1}
Let $u$ and $(\bfw_h,u_h)$ be the solution of (\ref{equ_biharmonic}) and (\ref{sta_fem_biharmonic}),
respectively, and $\bfw = \nabla u$. Further let the regularity assumption in (\ref{reg_ass}) hold. 
For $\theta = 1,$  if the stabilization parameter $\tau$ is chosen to be large enough, then there holds
\begin{align*}
\| \nabla \cdot (\bfw - \bfw_h) \|_{\calT_h} + \Vert u - u_{h} \Vert_{H^{1}(\Omega)}
+ \Vert u - u_{h}\Vert_{2, \calT_{h}}
\leq C h^s\| \bfw \|_{1+s,\Omega},
\end{align*}
where $s= \min\{\delta, k+1\}$ and $\delta$ is the parameter given in (\ref{reg_ass}).
When $\theta=-1,$ the above estimate holds for  $\tau>0.$
\end{thm}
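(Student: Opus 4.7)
The plan is to set the test functions equal to the discrete error in the error equation of Lemma \ref{lem_err_bih}, derive a coercivity bound on the left-hand side mirroring the stability proof of Theorem \ref{thm_main_sta}, estimate each term on the right-hand side using the approximation properties \eqref{appros}, and conclude by triangle inequality. Concretely, I choose $(\bfeta, v) = (\bfe_\bfw, e_u) \in \bfW_h \times V_h$ in \eqref{equ_err}. The left-hand side becomes $B_\theta((\bfe_\bfw, e_u),(\bfe_\bfw, e_u))$. Proceeding exactly as in the derivation of \eqref{est_sta_3} in Theorem \ref{thm_main_sta} (choosing $\tau$ large enough when $\theta = 1$; for $\theta = -1$ the cross term vanishes and any $\tau > 0$ suffices), I obtain
\begin{align*}
c\Bigl(\|\nabla \cdot \bfe_\bfw\|_{\calT_h}^{2} + h^{-2}\|\bfe_\bfw - \nabla e_u\|_{\calT_h}^{2}\Bigr)
\leq B_\theta((\bfe_\bfw, e_u),(\bfe_\bfw, e_u)).
\end{align*}
Combining with the discrete embedding estimate \eqref{discrete_embedding_ineq1} of Theorem \ref{thm_discrete_embedding}, the left-hand side controls $\|e_u\|_{H^{1}(\Omega)}^{2} + \|e_u\|_{2,\calT_h}^{2}$ as well.

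Next, I bound the five terms on the right-hand side of \eqref{equ_err} one by one, invoking Cauchy--Schwarz together with the standard discrete trace and inverse inequalities and the projection estimates \eqref{appro_1}--\eqref{appro_3}. Terms~1 and~2 are controlled by $\|\nabla \cdot \bfe_\bfw\|_{\calT_h}$ and $h^{-1}\|\bfe_\bfw - \nabla e_u\|_{\calT_h}$ respectively, each paired with an $O(h^{s})$ projection error. For term~3, a discrete inverse inequality yields $\|\nabla \cdot (\bfe_\bfw - \nabla e_u)\|_{\calT_h} \leq C h^{-1}\|\bfe_\bfw - \nabla e_u\|_{\calT_h}$, again giving an $O(h^{s})$ factor against $h^{-1}\|\bfe_\bfw - \nabla e_u\|_{\calT_h}$. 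For term~5, the inverse inequality $\|\nabla(\kappa \nabla \cdot \bfe_\bfw)\|_{\calT_h} \leq C h^{-1}\|\nabla \cdot \bfe_\bfw\|_{\calT_h}$ combined with the $O(h^{1+s})$ bound of \eqref{appro_2} furnishes the required estimate. The trace-type term~4 is handled by a discrete trace inequality on the test side together with the sharper boundary estimate in \eqref{appro_1}, namely $h^{1/2}\|\nabla \cdot (\bfw - \bfPi^{\rm div}_h \bfw)\|_{\partial \calT_h} \leq C h^{s}\|\bfw\|_{1+s,\Omega}$, yielding a bound by $C h^{s}\|\bfw\|_{1+s,\Omega} \cdot h^{-1}\|\bfe_\bfw - \nabla e_u\|_{\calT_h}$.

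After these estimates, I apply Young's inequality to each term so that the factors $\|\nabla \cdot \bfe_\bfw\|_{\calT_h}$ and $h^{-1}\|\bfe_\bfw - \nabla e_u\|_{\calT_h}$ can be absorbed into the left-hand side (this is where ``$\tau$ large enough'' for $\theta = 1$ is again used to keep constants uniform). The resulting inequality is
\begin{align*}
\|\nabla \cdot \bfe_\bfw\|_{\calT_h}^{2} + h^{-2}\|\bfe_\bfw - \nabla e_u\|_{\calT_h}^{2} + \|e_u\|_{H^{1}(\Omega)}^{2} + \|e_u\|_{2,\calT_h}^{2} \leq C h^{2s}\|\bfw\|_{1+s,\Omega}^{2}.
\end{align*}
To finish, I apply the triangle inequality with the splittings $\bfw - \bfw_h = (\bfw - \bfPi^{\rm div}_h \bfw) + \bfe_\bfw$ and $u - u_h = (u - \pi_h u) + e_u$, controlling the projection contributions by \eqref{appro_1} and \eqref{appro_3}.

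The main obstacle I anticipate is the interface term~4: $(\bfe_\bfw - \nabla e_u)\cdot \bfn$ is not globally smooth, so I must carefully use the discrete trace inequality on an element-by-element basis to move it to the $L^{2}$-interior norm, and pair that with the $O(h^{s-1/2})$ boundary estimate for $\nabla \cdot (\bfw - \bfPi^{\rm div}_h \bfw)$ from \eqref{appro_1}; a naive application would produce an unbalanced power of $h$. The remaining calculations are routine applications of Cauchy--Schwarz, Young's inequality, and inverse estimates.
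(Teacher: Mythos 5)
Your proposal is correct and follows essentially the same route as the paper's proof: testing the error equation of Lemma \ref{lem_err_bih} with $(\bfe_{\bfw}, e_u)$, bounding the terms $T_0$--$T_5$ exactly as the paper does (including the trace-plus-\eqref{appro_1} treatment of the interface term and the inverse-inequality treatment of the $\theta$-term), absorbing via $\tau$ large for $\theta=1$ (with $T_0$ vanishing for $\theta=-1$), invoking the discrete embedding of Theorem \ref{thm_discrete_embedding} to recover $\Vert e_u \Vert_{H^1(\Omega)} + \Vert e_u \Vert_{2,\calT_h}$, and concluding by triangle inequality with \eqref{appros}. The only cosmetic difference is that you cite \eqref{discrete_embedding_ineq1} directly where the paper re-derives the equivalent bound \eqref{err_eu_en_est1} from \eqref{discrete_embedding_ineq2}.
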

\begin{proof}
Choose $(\bfeta,v) = (\bfe_{\bfw},e_u)$ in  (\ref{equ_err}) to obtain
\begin{align*}
\|\nabla \cdot \bfe_{\bfw} \|^2_{\calT_h} + \tau h^{-2} \|  \bfe_{\bfw} -  \nabla e_u \|^2_{\calT_h}  =  \sum^5_{k=0}T_k,
\end{align*}
where 
\begin{align*}
T_0 &=- (1+\theta) ( \nabla( \kappa\nabla \cdot \bfe_{\bfw}), \bfe_{\bfw} -  \nabla e_u)_{\calT_h} ,\\
T_1 &= -( \kappa\nabla \cdot( \bfw -  \bfPi^{\rm div}_h \bfw ),\nabla \cdot \bfe_{\bfw} )_{\calT_h}, \\
T_2 &=  - \tau h^{-2} ( \bfw -  \bfPi^{\rm div}_h \bfw  - \nabla( u-\pi_h u ), 
\kappa(\bfe_{\bfw} - \nabla e_u ) )_{\calT_h}, \\
T_3 &= ( \kappa\nabla \cdot(\bfw -\bfPi^{\rm div}_h \bfw ) , \nabla \cdot(\bfe_{\bfw} - \nabla e_u) )_{\calT_h} ,\\
T_4 &= - \langle \kappa\nabla \cdot (\bfw -\bfPi^{\rm div}_h \bfw), (\bfe_{\bfw} - \nabla e_u) \cdot 
\bfn \rangle_{\partial \calT_h},\\
T_5 &=  -\theta ( \bfw -  \bfPi^{\rm div}_h \bfw -  \nabla( u-\pi_h u ), 
\nabla (\kappa\nabla \cdot \bfe_{\bfw}))_{\calT_h}.
\end{align*}
By the inverse inequality, the approximation properties of the two operators $\bfPi^{\rm div}_h$, $\pi_h$ 
in (\ref{appros}), and the fact that $\kappa \in W^{1,\infty}(\Omega)$, 
we can derive the upper bounds of $T_0,\cdots,T_5$ as follow:
\begin{align*}
T_0 & \leq C\| \nabla \cdot \bfe_{\bfw} \|_{\calT_h} h^{-1} \|\bfe_{\bfw} -  \nabla e_u\|_{\calT_h},\\
T_1 &\leq C h^s\| \bfw \|_{1+s,\Omega} \| \nabla \cdot \bfe_{\bfw} \|_{\calT_h},\\
T_2 & \leq C h^s\| \bfw \|_{1+s,\Omega} h^{-1}\|\bfe_{\bfw} -  \nabla e_u\|_{\calT_h},\\
T_3 & \leq C h^s\| \bfw \|_{1+s,\Omega}h^{-1}\|\bfe_{\bfw} -  \nabla e_u\|_{\calT_h},\\
T_4 & \leq C h^{\frac{1}{2}} \|\nabla \cdot (\bfw -\bfPi^{\rm div}_h \bfw)\|_{\partial \calT_h} h^{-1}\|\bfe_{\bfw} -  \nabla e_u\|_{\calT_h} \quad (\text{by trace inequality})\\
&\leq C h^s\| \bfw \|_{1+s,\Omega}h^{-1}\|\bfe_{\bfw} -  \nabla e_u\|_{\calT_h},\\
T_5 & \leq C h^s\| \bfw \|_{1+s,\Omega}  \| \nabla \cdot \bfe_{\bfw} \|_{\calT_h}.
\end{align*}
Now combining the above estimates for $T_k,k=0,\cdots,5$, the Cauchy-Schwarz inequality, and choosing the 
stabilization parameter $\tau$ to be large enough, we arrive at
\begin{align}
\label{err_res1}
\|\nabla \cdot \bfe_{\bfw} \|_{\calT_h} +  h^{-1} \|  \bfe_{\bfw} -  \nabla e_u \|_{\calT_h} \leq Ch^s\| \bfw \|_{1+s,\Omega} . 
\end{align}
By (\ref{err_res1}) and (\ref{appro_1}), it directly follows that
\[
\| \nabla \cdot (\bfw - \bfw_h) \|_{\calT_h} \leq C h^s\| \bfw \|_{1+s,\Omega}.
\]

By Theorem~\ref{thm_discrete_embedding}, triangle inequality and trace inequality, we obtain
\begin{align}\label{err_eu_en_est1}
\Vert e_{u} \Vert_{H^{1}(\Omega)} + \Vert e_{u} \Vert_{2, \calT_{h}}
& \leq \| \nabla \cdot  (\nabla e_u)\|_{\calT_h} +  \|h^{-\frac{1}{2}}  \llbracket  \nabla e_u \cdot \bfn \rrbracket \|_{\calE_h} \\
& \leq \| \nabla \cdot  (\nabla e_u -\bfe_{\bfw} )\|_{\calT_h} + \|\nabla \cdot \bfe_{\bfw}\|_{\calT_h} +  \|h^{-\frac{1}{2}}  \llbracket ( \bfe_{\bfw}-\nabla e_u )\cdot \bfn \rrbracket \|_{\calE_h}\nn\\
&\leq C\left(  h^{-1}   \| \nabla e_u -\bfe_{\bfw} \|_{\calT_h} + \|\nabla \cdot \bfe_{\bfw}\|_{\calT_h} \right).\nn
\end{align}
By (\ref{err_eu_en_est1}), (\ref{err_res1}), (\ref{appro_3}) and triangle inequality, we obtain 
\begin{align*}
\Vert u - u_{h} \Vert_{H^{1}(\Omega)} + \Vert u - u_{h} \Vert_{2, \calT_{h}} \leq C h^s\| \bfw \|_{1+s,\Omega}.
\end{align*}
For $\theta=-1$, the term $T_0$
becomes zero and the rest of the estimates hold. This complete the rest of the proof.
\end{proof}

In order to prove the $L^2$-norm of error estimate for $u-u_h$, we apply the Aubin-Nitsche duality argument. 

Now consider 
the dual problem (\ref{dual_p})
\begin{subequations}\label{dual_p}
\begin{align}
\Delta (\kappa \Delta \varphi)= z, &  \ \ \textrm{in}  \ \ \Omega, \\
\varphi = 0, &  \ \ \textrm{on}  \ \  \partial \Omega, \\
\frac{\partial \varphi}{\partial n} = 0, &  \ \ \textrm{on}  \ \  \partial \Omega.
\end{align}
\end{subequations}
with the following elliptic regularity condition: 
$\bfpsi = \nabla \varphi \in H^{1+\alpha}(\Omega)$ with $\alpha > 1/2$ and there holds 
\begin{align}
\label{reg_dual}
\|\varphi\|_{2+\alpha} + \|\bfpsi\|_{1+\alpha,\Omega} \leq C \|z\|_{0,\Omega}. 
\end{align}

\begin{thm}
\label{thm_L2_con_biharmonic}
Let the conditions in Theorem \ref{main_err1} and the regularity result (\ref{reg_dual}) of the dual problem hold. 
Then, the following estimates hold for sufficiently large $\tau>0$,
\begin{align}
\label{l2_err_u}
\|u-u_h\|_{\calT_h} & \lesssim h^{s+\sigma} \|\bfw\|_{1+s,\Omega}, \\
\label{l2_err_w}
\|\bfw-\bfw_h\|_{\calT_h} & \lesssim  h^{\min\{1+s,s+\sigma/2\}} \|\bfw\|_{1+s,\Omega},\\
\label{l2_err_gradu}
\|\nabla u-\nabla u_h\|_{\calT_h}& \lesssim h^{\min\{1+s,s+\sigma/2\}} \|\bfw\|_{1+s,\Omega},
\end{align}
where $s = \min\{\delta,k+1\}, \sigma = \min\{ \alpha,k+1 \}$.
\end{thm}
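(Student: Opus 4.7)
The plan is to follow the classical Aubin--Nitsche duality argument, supplemented by a discrete Nirenberg--Gagliardo-style interpolation that passes from the $L^2$ bound on $u-u_h$ to the $L^2$ bounds on $\nabla u-\nabla u_h$ and then on $\bfw-\bfw_h$. The starting observation is that, since the primal satisfies $\bfw=\nabla u$ and the dual satisfies $\bfpsi=\nabla\varphi$, the last two terms of $B_\theta$ vanish on $(\bfw,u)$ and on $(\bfpsi,\varphi)$; in particular $B_\theta((\bfpsi,\varphi),\cdot)$ is $\theta$-independent, and repeating the integration-by-parts derivation of Lemma~\ref{lem_err_bih} for the dual problem delivers the dual consistency
\[
B_\theta((\bfpsi,\varphi),(\bfeta,v))=(z,v)\qquad\forall(\bfeta,v)\in\bfW_h\times V_h,
\]
together with the Galerkin orthogonality $B_\theta((\bfw-\bfw_h,u-u_h),(\bfeta_h,v_h))=0$ on every discrete test pair.

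For~(\ref{l2_err_u}) I would set $z=u-u_h$ and test the dual consistency with $(\bfeta,v)=(\bfw-\bfw_h,u-u_h)$ to obtain $\|u-u_h\|_{\calT_h}^{2}=B_\theta((\bfpsi,\varphi),(\bfw-\bfw_h,u-u_h))$. Writing $(\bfpsi,\varphi)=(\bfpsi-\bfPi^{\rm div}_h\bfpsi,\varphi-\pi_h\varphi)+(\bfPi^{\rm div}_h\bfpsi,\pi_h\varphi)$, the discrete-projection piece is annihilated by combining Galerkin orthogonality with the symmetry of $B_1$ when $\theta=1$; for $\theta=-1$ the antisymmetric remainder that replaces the symmetry shortcut is controlled by an extra element-wise integration by parts that moves the gradient off $\kappa\nabla\cdot(\bfw-\bfw_h)$. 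The remaining contribution $B_\theta((\bfpsi-\bfPi^{\rm div}_h\bfpsi,\varphi-\pi_h\varphi),(\bfw-\bfw_h,u-u_h))$ is then bounded term by term via Cauchy--Schwarz, combining the primal energy estimates of Theorem~\ref{main_err1} with the sharper auxiliary bound $\|(\bfw-\bfw_h)-\nabla(u-u_h)\|_{\calT_h}\lesssim h^{1+s}\|\bfw\|_{1+s,\Omega}$ already embedded in~(\ref{err_res1}), the approximation properties~(\ref{appros}) applied to $(\bfpsi,\varphi)$ at rate $\sigma$, and the dual regularity $\|\bfpsi\|_{1+\sigma,\Omega}\lesssim\|u-u_h\|_{\calT_h}$ from~(\ref{reg_dual}); absorbing one factor of the $L^2$ error produces the rate $h^{s+\sigma}$.

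For~(\ref{l2_err_gradu}) I would rely on a discrete interpolation. Since $e_u:=\pi_h u-u_h$ lies in $V_h\subset H_0^1(\Omega)$, element-wise integration by parts together with the $H_0^1$-continuity of $e_u$ across interior faces gives
\[
\|\nabla e_u\|_{\calT_h}^{2}=-(\Delta_h e_u,e_u)_{\calT_h}+\sum_{F\in\calE_h}\langle e_u,\llbracket\nabla e_u\cdot\bfn\rrbracket\rangle_F,
\]
whence Cauchy--Schwarz and the discrete trace bound $\sum_F h_F\|e_u\|_{0,F}^{2}\lesssim\|e_u\|_{\calT_h}^{2}$ yield $\|\nabla e_u\|_{\calT_h}^{2}\lesssim\|e_u\|_{2,\calT_h}\|e_u\|_{\calT_h}$. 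Inserting $\|e_u\|_{2,\calT_h}\lesssim h^{s}$ from Theorem~\ref{main_err1} and $\|e_u\|_{\calT_h}\lesssim h^{s+\sigma}$ from the previous step, and then applying a triangle inequality with~(\ref{appro_2}), delivers~(\ref{l2_err_gradu}). The estimate~(\ref{l2_err_w}) follows immediately from
\[
\|\bfw-\bfw_h\|_{\calT_h}\leq\|\bfw-\bfPi^{\rm div}_h\bfw\|_{\calT_h}+\|\bfe_\bfw-\nabla e_u\|_{\calT_h}+\|\nabla e_u\|_{\calT_h},
\]
whose first two terms are $O(h^{1+s})$ by~(\ref{appro_2}) and by the bound on $\|\bfe_\bfw-\nabla e_u\|_{\calT_h}$ already contained in~(\ref{err_res1}).

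The main obstacle lies in the duality calculation for~(\ref{l2_err_u}): expanding $B_\theta$ produces terms involving $\nabla(\kappa\nabla\cdot\cdot)$ which, after an inverse estimate on the discrete factor, carry an $h^{-1}$ that must be absorbed by the $h^{1+s}$ scaling of $\|(\bfw-\bfw_h)-\nabla(u-u_h)\|_{\calT_h}$; moreover the non-symmetric case $\theta=-1$ requires the additional element-wise integration by parts mentioned above to deal with the antisymmetric correction that replaces the symmetry shortcut available for $\theta=1$. Once these terms are tracked carefully, every contribution scales as $h^{s+\sigma}\|\bfw\|_{1+s,\Omega}\|\bfpsi\|_{1+\sigma,\Omega}$, and the duality closes.
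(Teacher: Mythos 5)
You take the same Aubin--Nitsche skeleton for (\ref{l2_err_u}) as the paper, but with a different comparison pair: the paper inserts the \emph{discrete dual solution} $(\bfpsi_h,\varphi_h)$ of the same scheme and lets its energy estimates ($h^{-1}\|\bfpsi_h-\nabla\varphi_h\|_{\calT_h}\lesssim h^{\sigma}\|\bfpsi\|_{1+\sigma,\Omega}$, $\|\nabla\cdot(\bfPi^{\rm div}_h\bfpsi-\bfpsi_h)\|_{\calT_h}\lesssim h^{\sigma}\|\bfpsi\|_{1+\sigma,\Omega}$) do the work in the terms $D_2$--$D_4$, whereas you insert the projections $(\bfPi^{\rm div}_h\bfpsi,\pi_h\varphi)$ and use the approximation bounds (\ref{appros}). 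Both variants are standard and both close at rate $h^{s+\sigma}$, because $\bfw=\nabla u$ and $\bfpsi=\nabla\varphi$ make the differences collapse to the purely discrete field $-(\bfw_h-\nabla u_h)$, to which (\ref{aux_err_est1}) applies. For (\ref{l2_err_w})--(\ref{l2_err_gradu}) you reverse the paper's order: the paper first bounds $\|\bfe_{\bfw}\|_{\calT_h}$ via $(\nabla(u-u_h),\bfe_{\bfw})_{\calT_h}=-(u-u_h,\nabla\cdot\bfe_{\bfw})_{\calT_h}$ (using $\bfe_{\bfw}\in H_0({\rm div},\Omega)$, $u-u_h\in H^1_0(\Omega)$) and then deduces the gradient bound, while you first prove $\|\nabla e_u\|_{\calT_h}^2\lesssim\|e_u\|_{2,\calT_h}\|e_u\|_{\calT_h}$ by elementwise integration by parts (the identity and the trace estimates you invoke are correct for $e_u\in V_h$) and then get $\|\bfe_{\bfw}\|_{\calT_h}$ by a triangle inequality with (\ref{err_res1}). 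The two mechanisms are essentially equivalent and both produce the exponent $\min\{1+s,s+\sigma/2\}$. One small caveat: your route needs $\|e_u\|_{\calT_h}\lesssim h^{s+\sigma}$, hence $\|u-\pi_h u\|_{\calT_h}\lesssim h^{s+\sigma}$; when $\sigma>1$ (allowed, cf.\ Remark~\ref{remark_conv} with $\alpha=2$) this requires the $L^2$-optimal bound $\|u-\pi_h u\|_{\calT_h}\lesssim h^{2+s}\|\bfw\|_{1+s,\Omega}$, which is standard for $\pi_h$ but not listed in (\ref{appros}); state it.

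There are two soft spots in your duality step. First, you claim the terms involving $\nabla(\kappa\nabla\cdot\,\cdot)$ are tamed by ``an inverse estimate on the discrete factor,'' but with your splitting the differentiated factor is the projection error $\bfpsi-\bfPi^{\rm div}_h\bfpsi$, which is \emph{not} discrete, and for $\sigma<1$ its piecewise gradient is not even in $L^2$; the only way through is elementwise integration by parts moving the derivative onto $\bfw_h-\nabla u_h$, followed by the interface trace bound in (\ref{appro_1}) --- exactly the paper's treatment of $D_2$ (and, symmetrically, the $\theta$-term must be integrated by parts off $\nabla(\kappa\nabla\cdot(\bfw-\bfw_h))$ onto the projection errors, with a trace estimate for $\bfpsi-\bfPi^{\rm div}_h\bfpsi-\nabla(\varphi-\pi_h\varphi)$). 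You gesture at this only for $\theta=-1$, so it needs to be made explicit for all terms. Second, the $\theta=-1$ case is genuinely unsubstantiated: the discrete part is annihilated by Galerkin orthogonality \emph{via symmetry}, which is available only for $\theta=1$; for $\theta=-1$ the skew-symmetric remainder $2\big[(\nabla(\kappa\nabla\cdot\bfPi^{\rm div}_h\bfpsi),\bfw_h-\nabla u_h)_{\calT_h}-(\bfPi^{\rm div}_h\bfpsi-\nabla\pi_h\varphi,\nabla(\kappa\nabla\cdot(\bfw-\bfw_h)))_{\calT_h}\big]$ survives, and after integration by parts it contains contributions such as $\langle\kappa\nabla\cdot\bfpsi,(\bfw_h-\nabla u_h)\cdot\bfn\rangle_{\partial\calT_h}$ that naive trace estimates bound only by $Ch^{s}\|z\|_{0,\Omega}\|\bfw\|_{1+s,\Omega}$, one full power of $h^{\sigma}$ short of the claim; ``an extra integration by parts'' alone does not recover it. Note the theorem's hypothesis ``sufficiently large $\tau$'' matches the $\theta=1$ regime, and the paper's own reduction to (\ref{dual_est1}) likewise rests on the transposed Galerkin orthogonality, i.e.\ on symmetry; so your argument is sound if, like the paper's, the duality step is read as restricted to $\theta=1$, but the claim to handle $\theta=-1$ should either be dropped or backed by a genuinely finer (e.g.\ fractional-order duality) estimate.
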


\begin{proof}
For the dual problem (\ref{dual_p}), there holds 
\begin{align*}
B_{\theta}((\bfpsi,\varphi), (\bfw - \bfw_{h},u - u_{h}))= (z, u - u_{h}).
\end{align*} 
Since $B_{\theta}((\bfw-\bfw_h,u-u_h), (\bfeta_h,v_h)) = 0$ for 
all $(\bfeta_h,v_h) \in \bfW_h \times V_h$, we obtain
\begin{align}
B_{\theta}((\bfpsi-\bfpsi_h, \varphi-\varphi_h), (\bfw-\bfw_h,u-u_h) )  = (z, u - u_{h}),
\label{dual_est1}
\end{align}
where $(\bfpsi_h,\varphi_h)$ is the discrete solution of the dual problem (\ref{dual_p}) based on the 
mixed finite element scheme (\ref{sta_fem_biharmonic}). 
Then, we rewrite the left hand-side 
of (\ref{dual_est1}) as
\[
B_{\theta}((\bfpsi-\bfpsi_h, \varphi-\varphi_h), (\bfw-\bfw_h,u-u_h) )  = \sum^4_{k=1} D_k,
\]
where 
\begin{align*}
&D_1 = (\kappa\nabla\cdot(\bfpsi - \bfpsi_h), \nabla \cdot(\bfw-\bfw_h))_{\calT_h}, \\
& D_2 = ( \nabla(\kappa\nabla \cdot(\bfpsi-\bfpsi_h)), \bfw-\bfw_h-\nabla(u-u_h) )_{\calT_h},\\
&D_3 = \theta ( \nabla (\kappa\nabla \cdot (\bfw-\bfw_h)) ,  
\bfpsi-\bfpsi_h - \nabla(\varphi - \varphi_h))_{\calT_h}, \\
& D_4 = \tau h^{-2} ( \kappa( \bfpsi-\bfpsi_h - \nabla(\varphi - \varphi_h)), \bfw-\bfw_h-\nabla(u-u_h) )_{\calT_h}.
\end{align*}
By the error estimate in $H({\rm div})$-norm (cf. Theorem \ref{main_err1}), $D_1$ can be made bounded by
\[
D_1 \leq C h^{s + \sigma} \| \bfpsi \|_{1+\sigma,\Omega} \| \bfw \|_{1+s,\Omega}.
\]
From (\ref{err_res1}), we  easily find  that
\begin{align}
h^{-1}\| \bfw_h - \nabla u_h \|_{\calT_h} \leq C h^{s}\|\bfw\|_{1+s,\Omega}. \label{aux_err_est1}
\end{align}
Similarly, we also have the following estimate for the discrete solution of the dual problem (\ref{dual_p}) 
\[
h^{-1}\| \bfpsi_h - \nabla \varphi_h \|_{\calT_h} \leq C h^{\sigma}\|\bfpsi\|_{1+\sigma,\Omega} .
\]
Thus, a use of the Cauchy-Schwarz inequality yields
\[
D_4 \leq C h^{s+\sigma} \| \bfpsi \|_{1+\sigma,\Omega} \| \bfw \|_{1+s,\Omega}.
\]
Next we take integration by parts for $D_2$ and $D_3$. We note that $ \|   \nabla\cdot(\Pi^{\rm div}_h \bfpsi -\bfpsi_h)\|_{ \calT_h} \leq C h^{\sigma} \| \bfpsi \|_{1+\sigma,\Omega} $ can be similarly derived as in (\ref{err_res1}). Now for $D_2$, we arrive at
\begin{align*}
D_2 &=  (  \kappa\nabla\cdot(\bfpsi - \bfpsi_h), \nabla \cdot ( \bfw_h - \nabla u_h ))_{\calT_h} 
-\langle  \kappa\nabla\cdot(\bfpsi - \bfpsi_h), (\bfw_h - \nabla u_h )\cdot\bfn \rangle_{\partial \calT_h}\nn\\
&\leq C \|\nabla\cdot(\bfpsi - \bfpsi_h)  \|_{\calT_h} h^{-1}\|\bfw_h - \nabla u_h\|_{\calT_h} \\
&\quad + Ch^{\frac{1}{2}}( \| \nabla\cdot (\bfpsi - \bfPi^{\rm div}_h \bfpsi) \|_{\partial \calT_h} + \|   \nabla\cdot(\bfPi^{\rm div}_h \bfpsi -\bfpsi_h)\|_{\partial \calT_h})h^{-1} \| \bfw_h - \nabla u_h \|_{\calT_h} \\
&\leq Ch^{s+\sigma} \| \bfpsi \|_{1+\sigma,\Omega} \| \bfw \|_{1+s,\Omega}.
\end{align*}
For $D_3$, it follows that 
\begin{align*}
D_3  &= \theta (\kappa \nabla\cdot(\bfw - \bfw_h), \nabla \cdot(\bfpsi_h - \nabla \varphi_h) )_{\calT_h} 
- \theta\langle \kappa \nabla \cdot(\bfw-\bfw_h), (\bfpsi_h - \nabla \varphi_h)\cdot\bfn  \rangle_{\partial \calT_h}\\
& \leq C \theta \|\nabla\cdot(\bfw - \bfw_h)\|_{\calT_h} h^{-1} \|\bfpsi_h - \nabla \varphi_h)\|_{\calT_h}\\
&\quad + C \theta h^{\frac{1}{2}}( \| \nabla\cdot (\bfw - \bfPi^{\rm div}_h \bfw) \|_{\partial \calT_h} + \|   \nabla\cdot(\bfPi^{\rm div}_h \bfw -\bfw_h)\|_{\partial \calT_h})h^{-1} \| \bfpsi_h - \nabla \varphi_h \|_{\calT_h} \\
&\leq C \theta h^{s+\sigma} \| \bfpsi \|_{1+\sigma,\Omega} \| \bfw \|_{1+s,\Omega}.
\end{align*}
Combing the estimates for $D_l,l=1,\cdots,4$, we obtain
\[
(z, u-u_h) \leq C\;h^{s + \sigma} \| \bfpsi \|_{1+\sigma,\Omega} \| \bfw \|_{1+s,\Omega}.
\]
Let $z = u-u_h$. Then, we obtain the desired estimate (\ref{l2_err_u}) by (\ref{reg_dual}).

Next, we decompose $\|\bfe_{\bfw}\|^2_{\calT_h}$ into 
\[
\|\bfe_{\bfw}\|^2_{\calT_h} = ( \bfPi^{\rm div}_h \bfw - \bfw  + \nabla u - \nabla u_h 
+\nabla u_h - \bfw_h, \bfe_{\bfw} )_{\calT_h}.
\]
By (\ref{appro_2}) and (\ref{aux_err_est1}), we arrive at
\begin{align}\label{errw_pre1}
\|\bfe_{\bfw}\|^2_{\calT_h} \leq Ch^{1+s} \|\bfw\|_{1+s,\Omega} \|\bfe_{\bfw}\|_{\calT_h}  
+ (\nabla u - \nabla u_h,\bfe_{\bfw}  )_{\calT_h}.
\end{align}
By integration by parts and noting that $\bfe_{\bfw}  \in \bfW_h$ and $u-u_h \in H^1_0(\Omega)$, it follows that 
\begin{align}
(\nabla u - \nabla u_h,\bfe_{\bfw}  )_{\calT_h} &= -( u -  u_h, \nabla \cdot \bfe_{\bfw}  )_{\calT_h} \label{errw_pre2} \\
&\leq C \| u-u_h \|_{\calT_h} \|\nabla \cdot \bfe_{\bfw}  \|_{ \calT_h} \nn \\
&\leq  C h^{2s + \sigma} \|\bfw\|^2_{1+s,\Omega} .\nn
\end{align}
Now (\ref{l2_err_w}) can be obtained by (\ref{errw_pre1}), (\ref{errw_pre2}) and (\ref{appro_2}).

Similarly, we can decompose $\|\nabla e_u\|^2_{\calT_h}$ into 
\[
\|\nabla e_u\|^2_{\calT_h} = ( \bfw-\bfw_h + \bfw_h - \nabla u_h, \nabla e_u)_{\calT_h}.
\]
Then, the estimate (\ref{l2_err_gradu}) follows from the above equality, (\ref{l2_err_w}) and (\ref{aux_err_est1}). 
This completes the rest of the proof.
\end{proof}

\begin{remark}
\label{remark_conv}
When $\kappa$ is a constant and the domain is convex, the solution of (\ref{equ_biharmonic}) are smooth 
enough and the solution of dual problem (\ref{dual_p}) is also smooth with $\alpha=2$, then we have
\begin{align*}
 \|u-u_h\|_{\calT_h} &\leq C h^{k+3}\|\bfw\|_{k+2,\Omega},\quad k\geq 1,\\
 \|\bfw-\bfw_h\|_{\calT_h} +\|\nabla u-\nabla u_h\|_{\calT_h}& \leq C h^{k+2}\|\bfw\|_{k+2,\Omega},\quad k\geq 1,\\
 \| \nabla \cdot (\bfw - \bfw_h) \|_{\calT_h} + \| \nabla (u -  u_h) \|_{1,\calT_h} &\leq C h^{k+1}\| 
 \bfw \|_{k+2,\Omega},\quad k\geq 0.
\end{align*}
\end{remark}

\section{A new mixed finite element scheme for the von K\'arm\'an equation}\label{sec_method_vonkarman}
In this section, we extend our new formulation to 
the von K\'arm\'an equation. Further, under smallness condition on the data to be defined subsequently, 
we present the existence 
and uniqueness result for the discrete
nonlinear system, the {\it a priori} bounds 
and the corresponding error estimates.

Through out this section, we assume that the following regularity of the solution of 
the von K\'arm\'an equation (\ref{equ_von_karman}):
 \begin{align}\label{ass_reg_vk}
 \xi \in  H^{2+\beta}(\Omega),  \psi \in H^{2+\beta}(\Omega), 
 \quad  \text{ where } 2 \geq\beta > 1/2.
\end{align}
We first recall the result of in \cite[Lemma $2.2$]{Brenner2017}. For the solution $ (\xi,\psi)$ of the von K\'arm\'an equation (\ref{equ_von_karman}) and any $\eta \in H^1_0(\Omega)$, there holds
\begin{align}\label{lem_equ_pre_vk}
([\xi,\psi],\eta) = ({\rm cof}(D^2\xi): D^2 \psi, \eta) 
=- ({\rm cof}(D^2\xi)\nabla \psi,\nabla \eta).
\end{align}

We use the same finite element spaces $\bfW_h$ and $V_h$ as in Section \ref{sec_method_biharmonic} in 
two dimensions. With $\bfu=\nabla \xi$ and $\bfw=\nabla \psi$, our mixed finite element scheme 
for the von K\'arm\'an equation (\ref{equ_von_karman}) is to seek an approximation 
$(\bfu_h,\bfw_h,\xi_h,\psi_h)\in \bfW_h\times \bfW_h\times V_h \times V_h$ such that
\begin{subequations}\label{sta_method_vk}
\begin{align}
B_{\theta}((\bfu_h,\xi_h),(\bfv,\eta))+({\rm cof}(D^2\xi_h)\nabla \psi_h,\nabla \eta)_{\calT_h} &
= (f,\eta)_{\langle H^{-1}(\Omega), H_{0}^{1}(\Omega) \rangle},\\
B_{\theta} ((\bfw_h,\psi_h),(\bfz,\phi)) -({\rm cof}(D^2\xi_h)\nabla \xi_h,\nabla \phi)_{\calT_h} &= 0,
\end{align}
\end{subequations}
for any $(\bfv,\bfz,\eta,\phi) \in \bfW_h\times \bfW_h\times V_h \times V_h$, where the bilinear form 
$B_{\theta}((\cdot,\cdot),(\cdot,\cdot))$ is defined as in (\ref{def_bilinear}) with $\kappa = 1$.

\subsection{Existence and uniqueness of the discrete nonlinear system and stability estimate} 
Since the discrete system  (\ref{sta_method_vk}) leads to a system of nonlinear algebraic system,
therefore, in this subsection, we first prove the existence and uniqueness of the discrete system based on  
the following one point iterative scheme, called the Picard's method.
 from the mixed finite element scheme (\ref{sta_method_vk}). 

Given an initialization $\xi^{m-1}_h \in V_h$, $m\geq 1$, the Picard's iteration for the nonlinear system (\ref{sta_method_vk}) 
is to find $(\bfu^m_h,\bfw^m_h,\xi^m_h,\psi^m_h)\in \bfW_h\times \bfW_h\times V_h \times V_h$ such that
\begin{subequations}\label{picard_iter}
\begin{align}
B_{\theta}((\bfu^m_h,\xi^m_h),(\bfv,\eta))+\left({\rm cof}(D^2\xi^{m-1}_h)\nabla \psi^m_h,\nabla \eta\right)_{\calT_h} 
&= (f,\eta)_{\langle H^{-1}(\Omega), H_{0}^{1}(\Omega) \rangle},\\
B_{\theta}((\bfw^m_h,\psi^m_h),(\bfz,\phi)) - \left({\rm cof}(D^2\xi^{m-1}_h)\nabla \xi^m_h,\nabla \phi\right)_{\calT_h} &= 0,
\end{align}
\end{subequations}
for any $(\bfv,\bfz,\eta,\phi) \in \bfW_h\times \bfW_h\times V_h \times V_h$. Choosing $\bfv = \bfu^m_h, 
\eta = \xi^m_h, \bfz = \bfw^m_h, \phi =\psi^m_h $ and noting that
\[
({\rm cof}(D^2\xi^{m-1}_h)\nabla \psi^m_h,\nabla \xi^m_h)_{\calT_h}  
= ({\rm cof}(D^2\xi^{m-1}_h)\nabla \xi^m_h,\nabla \psi^m_h)_{\calT_h} ,
\]
we now obtain
\[
B_{\theta}((\bfu^m_h,\xi^m_h),(\bfu^m_h,\xi^m_h)) + B((\bfw^m_h,\psi^m_h),(\bfw^m_h,\psi^m_h)) 
=  (f,\xi^m_h)_{\langle H^{-1}(\Omega), H_{0}^{1}(\Omega) \rangle}.
\]
Similar to the stability estimate (\ref{main_sta_est}), 
we easily follow the proof of Theorem \ref{thm_main_sta} to derive the  estimate below, provided  
for $\theta\neq -1$ the stabilization parameter $\tau$ in (\ref{sta_method_vk}) is chosen to be large enough,
and for  $\theta=-1$,  $\tau$ is chosen to be any arbitrary positive constant.
\begin{align*}
\||(\bu_h^m,\bfw_h^m, \xi_h^m, \psi_h^m)\|| 
 \leq C_{sta} \|f\|_{H^{-1}(\Omega)},
\end{align*}
where
$$\||(\bu_h^m,\bfw_h^m, \xi_h^m, \psi_h^m)\||:= \Big(\|\bfu^m_h\|^2_{H({\rm div},\Omega)} 
+ \|\bfw^m_h\|^2_{H({\rm div},\Omega)} + \| \xi^m_h\|^2_{2,\calT_h}  
 + \| \psi^m_h\|^2_{2,\calT_h}\Big)^{1/2}.$$
Inspired by the above result, we define a closed subset of $\bfW_h\times \bfW_h\times V_h \times V_h$:
\begin{align}
\label{def_K_h}
\calK_h:= \{ (\bfv, \bfz, \eta, \phi) \in  \bfW_h\times \bfW_h\times V_h \times V_h: \ 
\||(\bfv,\bfz, \eta, \phi)\|| \leq C_{sta} \|f\|_{H^{-1}(\Omega)} \}.
\end{align}
We also define a mapping $\calF: \calK_h \rightarrow \calK_h$ as follows: for any $(\widehat{\bfu},\widehat{\bfw},\widehat{\xi},\widehat{\psi})\in \calK_h$, $(\bfu^\ast,\bfw^\ast,\xi^\ast,\psi^\ast)= \calF(\widehat{\bfu},\widehat{\bfw},\widehat{\xi},\widehat{\psi})$ is obtained by one step of the above Picard iteration.  Clearly, $(\bfu_h,\bfw_h,\xi_h,\psi_h)$ is a solution of (\ref{sta_method_vk}) if and only if it is a fixed point of the mapping $\calF$.

We are now ready to show the existence and uniqueness result for the nonlinear system (\ref{sta_method_vk}) 
and the associated stability estimate.

\begin{thm}
\label{thm_sta_vk}
(Existence, uniqueness and stability) 
If $\|f\|_{H^{-1}(\Omega)}$ is small enough and the stabilization 
parameter $\tau$ in (\ref{sta_method_vk}) is large enough for $\theta = 1$ and for $\theta=-1$ any $\tau>0$, 
the discrete nonlinear system (\ref{sta_method_vk}) has
a unique solution $(\bfu_h,\bfw_h,\xi_h,\psi_h)\in \bfW_h\times \bfW_h\times V_h \times V_h$. More over, 
there also holds the following  estimate::
\begin{align}
\label{main_thm_sta_vk}
\||(\bfu_h,\bfw_h, \xi_h, \psi_h)\||
  \leq C_{sta} \|f\|_{H^{-1}(\Omega)}. 
\end{align}
\end{thm}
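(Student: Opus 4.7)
The plan is to apply the Banach fixed point theorem to the Picard iteration map $\calF:\calK_h\to\bfW_h\times\bfW_h\times V_h\times V_h$ defined right before the theorem, where $\calK_h$ is the closed ball defined in \eqref{def_K_h}. Two steps are required: (i) $\calF$ actually maps $\calK_h$ into $\calK_h$, and (ii) $\calF$ is a strict contraction on $\calK_h$ provided $\|f\|_{H^{-1}(\Omega)}$ is small enough. The stability bound \eqref{main_thm_sta_vk} will then follow from step (i) applied to the fixed point.

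For step (i), let $(\widehat{\bfu},\widehat{\bfw},\widehat{\xi},\widehat{\psi})\in\calK_h$ and set $(\bfu^\ast,\bfw^\ast,\xi^\ast,\psi^\ast)=\calF(\widehat{\bfu},\widehat{\bfw},\widehat{\xi},\widehat{\psi})$ by one Picard step. Testing \eqref{picard_iter} with $(\bfv,\eta)=(\bfu^\ast,\xi^\ast)$ and $(\bfz,\phi)=(\bfw^\ast,\psi^\ast)$, the two cofactor contributions
\[
({\rm cof}(D^2\widehat{\xi})\nabla\psi^\ast,\nabla\xi^\ast)_{\calT_h}\quad\text{and}\quad -({\rm cof}(D^2\widehat{\xi})\nabla\xi^\ast,\nabla\psi^\ast)_{\calT_h}
\]
cancel by the symmetry of the cofactor pairing in 2D. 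One is then reduced exactly to the biharmonic stability argument of Theorem~\ref{thm_main_sta}, yielding $\||(\bfu^\ast,\bfw^\ast,\xi^\ast,\psi^\ast)\||\le C_{sta}\|f\|_{H^{-1}(\Omega)}$, so that $\calF(\calK_h)\subset\calK_h$. This part is essentially the computation already carried out in the paragraph preceding the statement.

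For step (ii), take $(\widehat{\bfu}_i,\widehat{\bfw}_i,\widehat{\xi}_i,\widehat{\psi}_i)\in\calK_h$ for $i=1,2$ with Picard images $(\bfu^\ast_i,\bfw^\ast_i,\xi^\ast_i,\psi^\ast_i)$, and denote differences by $\delta(\cdot)$. Subtracting \eqref{picard_iter} and using the bilinearity of ${\rm cof}(D^2\cdot)$ in 2D, I write
\[
{\rm cof}(D^2\widehat{\xi}_1)\nabla\psi^\ast_1 - {\rm cof}(D^2\widehat{\xi}_2)\nabla\psi^\ast_2 = {\rm cof}(D^2\widehat{\xi}_1)\nabla\delta\psi^\ast + {\rm cof}(D^2\delta\widehat{\xi})\nabla\psi^\ast_2,
\]
and analogously for the second equation. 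Testing the system for the differences with $(\delta\bfu^\ast,\delta\xi^\ast)$ and $(\delta\bfw^\ast,\delta\psi^\ast)$, the terms carrying ${\rm cof}(D^2\widehat{\xi}_1)$ again cancel pairwise as in step (i). What remains are the two ``transport'' residuals
\[
({\rm cof}(D^2\delta\widehat{\xi})\nabla\psi^\ast_2,\nabla\delta\xi^\ast)_{\calT_h}\quad\text{and}\quad -({\rm cof}(D^2\delta\widehat{\xi})\nabla\xi^\ast_2,\nabla\delta\psi^\ast)_{\calT_h}.
\]
Using H\"older's inequality together with the 2D Sobolev embedding $H^1(\Omega)\hookrightarrow L^p(\Omega)$ for any $p<\infty$, combined with the discrete $H^2$-stability \eqref{discrete_embedding_ineqs}, each of these is controlled by $C\,\|\delta\widehat{\xi}\|_{2,\calT_h}\,\||(\bfu^\ast_2,\bfw^\ast_2,\xi^\ast_2,\psi^\ast_2)\||\,\||(\delta\bfu^\ast,\delta\bfw^\ast,\delta\xi^\ast,\delta\psi^\ast)\||$. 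Since $(\bfu^\ast_2,\bfw^\ast_2,\xi^\ast_2,\psi^\ast_2)\in\calK_h$, its norm is bounded by $C_{sta}\|f\|_{H^{-1}(\Omega)}$, so repeating the coercivity argument of Theorem~\ref{thm_main_sta} for the difference system produces
\[
\||(\delta\bfu^\ast,\delta\bfw^\ast,\delta\xi^\ast,\delta\psi^\ast)\||\le C\,\|f\|_{H^{-1}(\Omega)}\,\||(\delta\widehat{\bfu},\delta\widehat{\bfw},\delta\widehat{\xi},\delta\widehat{\psi})\||.
\]
Choosing $\|f\|_{H^{-1}(\Omega)}$ small enough makes the prefactor strictly less than one, so $\calF$ is a contraction on $\calK_h$ and Banach's theorem supplies the unique fixed point, which is the desired $(\bfu_h,\bfw_h,\xi_h,\psi_h)$. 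The bound \eqref{main_thm_sta_vk} is inherited from membership in $\calK_h$.

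The main obstacle I anticipate is controlling the nonlinear cofactor residuals in the contraction estimate while keeping everything expressed in the discrete $\||\cdot\||$ norm. The cofactor is a quadratic expression in second derivatives of piecewise polynomials across which $\nabla\xi^\ast_2,\nabla\psi^\ast_2$ are only piecewise smooth; balancing the H\"older exponents so that one factor lands in $L^2$ via the discrete $H^2$ control \eqref{discrete_embedding_ineqs} and the remaining factor is absorbed via a 2D Sobolev/inverse embedding is the delicate bookkeeping step, and it is what forces the smallness condition on $\|f\|_{H^{-1}(\Omega)}$.
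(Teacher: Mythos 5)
Your proposal is correct and follows the same overall strategy as the paper: Banach's fixed point theorem applied to the Picard map $\calF$ on the closed ball $\calK_h$, with the self-mapping property obtained exactly as in the paragraph preceding the theorem and the contraction obtained by an energy argument on the difference system. The one genuine difference is in how you split the nonlinear difference terms. The paper freezes the cofactor at $\widehat{\xi}^2_h$ and writes $T_\delta = R_1 + R_2$, where each of $R_1,R_2$ retains a term of the form $\pm\left({\rm cof}(D^2\widehat{\xi}^2_h)\,\nabla\delta_\psi,\nabla\delta_\xi\right)_{\calT_h}$; these are then bounded individually via H\"older and the membership $\widehat{\xi}^2_h\in\calK_h$, which is why the paper's contraction factor appears as $C_1C_{sta}^3\|f\|_{H^{-1}(\Omega)}/\bigl(1-\tfrac{3}{2}C_1C_{sta}^3\|f\|_{H^{-1}(\Omega)}\bigr)$ after absorbing those terms into the left-hand side. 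You instead freeze the cofactor at $\widehat{\xi}_1$ on the difference gradients and observe that, since the cofactor of the (symmetric) Hessian is symmetric, the two terms $+\left({\rm cof}(D^2\widehat{\xi}_1)\nabla\delta\psi^\ast,\nabla\delta\xi^\ast\right)_{\calT_h}$ and $-\left({\rm cof}(D^2\widehat{\xi}_1)\nabla\delta\xi^\ast,\nabla\delta\psi^\ast\right)_{\calT_h}$ cancel exactly --- the same mechanism that handles step (i) --- so only the two transport residuals linear in $\delta\widehat{\xi}$ survive. This buys a cleaner contraction constant of the form $C\|f\|_{H^{-1}(\Omega)}$ with no denominator; in fact the analogous cancellation is also available in the paper's own splitting (the two ${\rm cof}(D^2\widehat{\xi}^2_h)$ terms in $R_1$ and $R_2$ are equal and opposite), though the paper does not exploit it. One small repair: to bound $\|\nabla\psi^\ast_2\|_{L^4(\Omega)}$ and $\|\nabla\delta\xi^\ast\|_{L^4(\calT_h)}$ you should not invoke the continuous embedding $H^1(\Omega)\hookrightarrow L^p(\Omega)$, since these gradients are only piecewise smooth; the correct tool is the discrete Sobolev embedding for broken polynomial spaces, \cite[Theorem~$2.1$]{Ern2010a}, which controls these $L^4$ norms by $\|\cdot\|_{2,\calT_h}$ (the tangential jumps of the gradient vanish because $V_h\subset H^1_0(\Omega)$, so the jump content of $\Vert\cdot\Vert_{2,\calT_h}$ suffices). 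This is exactly the citation the paper uses at the corresponding step, and with it your argument closes completely.
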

\begin{proof}
Clearly, the mapping $\calF$ maps $\calK_h$ into itself. In order to show the existence and uniqueness of 
the solution of (\ref{sta_method_vk}), it suffices to show that $\calF$ is a contraction on $\calK_h$. Let 
$
(\widehat{\bfu}^1_h,\widehat{\bfw}^1_h,\widehat{\xi}^1_h,\widehat{\psi}^1_h)$, 
$(\widehat{\bfu}^2_h,\widehat{\bfw}^2_h,\widehat{\xi}^2_h,\widehat{\psi}^2_h) \in \calK_h
$
and $({\bfu}^1_h,{\bfw}^1_h,{\xi}^1_h,{\psi}^1_h), ({\bfu}^2_h,{\bfw}^2_h,{\xi}^2_h,{\psi}^2_h)$ be the solutions 
of the Picard iteration (\ref{picard_iter}) with the initializations 
$(\widehat{\bfu}^1_h,\widehat{\bfw}^1_h,\widehat{\xi}^1_h,\widehat{\psi}^1_h), (\widehat{\bfu}^2_h,
\widehat{\bfw}^2_h,\widehat{\xi}^2_h,\widehat{\psi}^2_h),$ respectively. Define
\[
\delta_\bfu := \bfu^1_h - \bfu^2_h,\quad \delta_\bfw := \bfw^1_h - \bfw^2_h, \quad \delta_\xi := \xi^1_h - \xi^2_h,\quad \delta_\psi := \psi^1_h - \psi^2_h.
\]
By (\ref{picard_iter}), it follows that
\begin{align*}
B_{\theta}((\delta_\bfu,\delta_\xi),(\bfv,\eta))+\left({\rm cof}(D^2\widehat{\xi}^1_h)\nabla \psi^1_h 
- {\rm cof}(D^2\widehat{\xi}^2_h)\nabla \psi^2_h,\nabla \eta\right)_{\calT_h} &= 0,\\
B_{\theta}((\delta_\bfw,\delta_\psi),(\bfz,\phi)) -\left({\rm cof}(D^2\widehat{\xi}^1_h)\nabla \xi^1_h 
-{\rm cof}(D^2\widehat{\xi}^2_h)\nabla \xi^2_h  ,\nabla \phi\right)_{\calT_h} &= 0.
\end{align*}
Choose $\bfv = \delta_\bfu, \bfz = \delta_\bfw, \eta = \delta_\xi, \phi =\delta_\psi $. Following the proof of 
stability estimate for (\ref{main_sta_est}) again, if the stabilization parameter $\tau>0$ 
in (\ref{sta_method_vk}) is large enough for $\theta\neq -1$ and for $\theta=-1$ any arbitrary $\tau>0$, 
we easily obtain
\begin{align}
\label{vk_exi_est1}
\||(\delta_\bfu,\delta_\bfw,\delta_\xi,\delta_\psi)\||^2 \leq C^2_{sta} \left|T_\delta\right|, 
\end{align} 
where $T_\delta =   - \left({\rm cof}(D^2\widehat{\xi}^1_h)\nabla \psi^1_h - {\rm cof}(D^2\widehat{\xi}^2_h)\nabla \psi^2_h,\nabla \delta_\xi \right)_{\calT_h}  +  \left({\rm cof}(D^2\widehat{\xi}^1_h)\nabla \xi^1_h -{\rm cof}(D^2\widehat{\xi}^2_h)\nabla \xi^2_h  ,\nabla \delta_\psi\right)_{\calT_h}.$

In order to get an upper bound of $T_\delta$, we rewrite $T_\delta$ as follows:
\begin{align*}
T_\delta  = R_1 + R_2, 
\end{align*}
where
\begin{align*}
R_1 = & - \left({\rm cof}(D^2\widehat{\xi}^1_h-D^2\widehat{\xi}^2_h)\nabla \psi^1_h,\nabla \delta_\xi \right)_{\calT_h}
 - \left({\rm cof}(D^2\widehat{\xi}^2_h) \nabla \delta_\psi, \nabla \delta_\xi \right)_{\calT_h}, \\  
R_2 = & \left({\rm cof}(D^2\widehat{\xi}^1_h-D^2\widehat{\xi}^2_h)\nabla \xi^1_h  ,\nabla \delta_\psi\right)_{\calT_h}
+\left({\rm cof}(D^2\widehat{\xi}^2_h) \nabla \delta_\xi ,\nabla \delta_\psi\right)_{\calT_h} .
\end{align*}

For $R_1$, we have
\begin{align} 
\label{vk_exi_est2}
|R_1| &\leq \|D^2\widehat{\xi}^1_h-D^2\widehat{\xi}^2_h\|_{\calT_{h}}  \|\nabla \psi^1_h\|_{L^4(\Omega)} 
\|\nabla \delta_\xi \|_{L^4(\Omega)} + \|D^2\widehat{\xi}^2_h\|_{\calT_{h}} 
\Vert \nabla \delta_\psi\Vert_{L^{4}(\Omega)} \Vert \nabla \delta_\xi \Vert_{L^{4}(\Omega)} \\
&\leq C_1 \left( \|\widehat{\xi}^1_h-\widehat{\xi}^2_h\|_{2, \calT_{h}} 
\| \psi^1_h\|_{2,\calT_h} \| \delta_\xi\|_{2,\calT_h} 
+ \|\widehat{\xi}^2_h\|_{2, \calT_{h}} 
\| \delta_{\psi}\|_{2,\calT_h} \| \delta_\xi\|_{2,\calT_h} \right)
\quad \text{(by \cite[Theorem $2.1$]{Ern2010a})} \nn\\\
& \leq C_1 C_{sta} \;\|f\|_{H^{-1}(\Omega)}
\left(\|\widehat{\xi}^1_h-\widehat{\xi}^2_h\|_{2,\calT_h} + \| \delta_{\psi}\|_{2,\calT_h} \right) 
\| \delta_\xi\|_{2,\calT_h} \qquad\quad \ \text{(by the property of  $\calK_h$)}\nn\\
& \leq \frac{1}{2} C_1 C_{sta} \|f\|_{H^{-1}(\Omega)} \left( \|\widehat{\xi}^1_h-\widehat{\xi}^2_h\|^2_{2,\calT_h} 
+ \| \delta_{\psi}\|_{2,\calT_h}^{2}+ 2 \| \delta_\xi\|^2_{2,\calT_h}\right). \nn
\end{align}
Then by the property of the subspace $\calK_h$, $R_2$ can be similarly deduced as follows:
\begin{align} \label{vk_exi_est3}
|R_2| & \leq\frac{1}{2} C_1 C_{sta}\;\|f\|_{H^{-1}(\Omega)} \left(\|\widehat{\xi}^1_h-\widehat{\xi}^2_h\|^2_{2,\calT_h} 
+ \| \delta_\xi\|^2_{2,\calT_h} + 2 \|\delta_\psi\|^2_{2,\calT_h}\right).
\end{align}
With $\delta_{\widehat{\xi}} = \widehat{\xi}^1_h-\widehat{\xi}^2_h$, combine (\ref{vk_exi_est2})  and 
(\ref{vk_exi_est3}) to obtain
\begin{align} \label{vk_exi_est3-1}
|T_{\delta}| &\leq |R_1|+ |R_2| 
 \leq C_1 C_{sta}\;\|f\|_{H^{-1}(\Omega)} \left(\|\delta_{\widehat{\xi}}\|^2_{2,\calT_h} 
+ \frac{3}{2}  \| \delta_\xi\|^2_{2,\calT_h} + \frac{3}{2} \|\delta_\psi\|^2_{2,\calT_h}\right)\nn\\
& \leq C_1 C_{sta}\;\|f\|_{H^{-1}(\Omega)} \Big(\||(\delta_{\widehat{\bfu}},\delta_{\widehat{\bfw}},
\delta_{\widehat{\xi}},\delta_{\widehat{\psi}})\||^2 + \frac{3}{2} \||(\delta_\bfu,\delta_\bfw,\delta_\xi,\delta_\psi)\||^2 
\Big).
\end{align}
On sustitution of (\ref{vk_exi_est3-1}) in (\ref{vk_exi_est1}), we  find that
\begin{align*}
(1- \frac{3}{2} C_1 C_{sta}^3 \;\|f\|_{H^{-1}(\Omega)})\;  \||(\delta_\bfu,\delta_\bfw,\delta_\xi,\delta_\psi)\||^2
\leq C_1 C_{sta}^3\;\|f\|_{H^{-1}(\Omega)} \||(\delta_{\widehat{\bfu}},\delta_{\widehat{\bfw}},
\delta_{\widehat{\xi}},\delta_{\widehat{\psi}})\||^2.
\end{align*} 
Choose $\|f\|_{H^{-1}\Omega} \leq 2/(3C_1\;C_{sta}^3)$ and obtain
\begin{align*}
 \||(\delta_\bfu,\delta_\bfw,\delta_\xi,\delta_\psi)\||^2
\leq \frac{C_1 C_{sta}^3\;\|f\|_{H^{-1}(\Omega)}}{(1- \frac{3}{2} C_1 C_{sta}^3 \;\|f\|_{H^{-1}(\Omega)})}\; 
\||(\delta_{\widehat{\bfu}},\delta_{\widehat{\bfw}},
\delta_{\widehat{\xi}},\delta_{\widehat{\psi}})\||^2.
\end{align*} 
Obviously, the above bound implies that $\calF$ is a contraction on $\calK_h$ if we further 
choose $\|f\|_{H^{-1}(\Omega)}<\frac{1}{3C_1\;C_{sta}^3}$ such that 
$\lambda: =  \frac{C_1 C_{sta}^3 \|f\|_{H^{-1}(\Omega)}}{1- \frac{3}{2} C_1 C_{sta}^3 \|f\|_{H^{-1}(\Omega)})}< 1$. 

By the fixed point theorem, there is a unique fixed point $(\bfu_h,\bfw_h,\xi_h,\psi_h)$ of the mapping $\calF$, and it is also the unique solution of the system (\ref{sta_method_vk}). 

Now we have proved the existence and uniqueness of (\ref{sta_method_vk}).
Notice that (\ref{main_thm_sta_vk}) holds due to the definition of $\calK_{h}$ in (\ref{def_K_h}).
Now we complete the proof.
\end{proof}

\subsection{Error estimates}
In this section, we present  {\it a priori} error estimates for the mixed finite element 
scheme (\ref{sta_method_vk}) for the von K\'arm\'an equation. 

Define
\[
\bfe_{\bfu} :=  \bfPi^{\rm div}_h\bfu - \bfu_h,\quad \bfe_{\bfw} :=  \bfPi^{\rm div}_h\bfw - \bfw_h, \quad e_{\xi} :=  \pi_h \xi -\xi_h,\quad e_\psi = \pi_h \psi - \psi_h,
\]
where $\bfPi^{\rm div}_h: H_{0}({\rm div},\Omega) \rightarrow \bfW_h$ and $\pi_h: H^1_0(\Omega)\rightarrow V_h$ 
are the $H({\rm div})$-smooth  projection and $H^1$-smooth projection as used in Section \ref{sec_err_b}. 
By the regularity assumption (\ref{ass_reg_vk}), the following approximation properties for the solution 
$(\bfu,\xi)$ of (\ref{equ_von_karman}) with $\bfu = \nabla \xi$ hold true for the two operators $\bfPi^{\rm div}_h$ and $\pi_h$:
\begin{subequations}
\label{appros_vk}
\begin{align}
\label{appro_1_vk}
\| \bfu - \bfPi^{\rm div}_h\bfu\|_{H({\rm div},\Omega)}   + h^{\frac{1}{2}}\| \nabla \cdot( \bfu - \bfPi^{\rm div}_h\bfu  )\|_{\partial \calT_h}  & \leq C h^{r} \| \bfu \|_{1+r,\Omega},\\
 \label{appro_2_vk}
\| \bfu - \bfPi^{\rm div}_h\bfu\|_{\calT_h} + \| \nabla ( \xi - \pi_h \xi) \|_{\calT_h} & \leq Ch^{1+r} \|\bfu \|_{1+r,\Omega},\\
\label{appro_3_vk}
\Vert \xi - \pi_{h} \xi \Vert_{2, \calT_{h}}
& \leq C h^{r} \| \bfu \|_{1+r,\Omega},
\end{align}
\end{subequations}
where $r =\min\{\beta, k+1\}$. We remark that (\ref{appro_1_vk}) and (\ref{appro_2_vk}) also hold true for the solution $(\bfw,\psi)$ of (\ref{equ_von_karman}) with $\bfw = \nabla \psi$.

We first present the error equation which we need for the error estimates.
\begin{lem}\label{lemma_erq_vk}
With $\bfu = \nabla \xi$ and $\bfw = \nabla \psi,$ let $(\xi,\psi)$ and $(\bfu_h,\bfw_h,\xi_h,\psi_h)$ be 
the solutions of (\ref{equ_von_karman}) 
and (\ref{sta_method_vk}),respectively. Under the regularity assumption  (\ref{ass_reg_vk}),  there holds 
for any $(\bfv,\bfz,\eta,\phi) \in \bfW_h\times \bfW_h\times V_h \times V_h$:
\begin{align}\label{error_eq_vk}
B_{\theta}((\bfe_{\bfu},e_{\xi}),(\bfv,\eta)) + B_{\theta}((\bfe_{\bfw},e_{\psi}),(\bfz,\phi))  = S_1 + S_2 + S_3 + S_4,
\end{align}
where 
\begin{align*}
S_1 &=  -( \nabla \cdot( \bfu -  \bfPi^{\rm div}_h \bfu ),\nabla \cdot \bfv )_{\calT_h}  - \tau h^{-2} ( \bfu -  \bfPi^{\rm div}_h \bfu 
 - \nabla( \xi-\pi_h \xi ), \bfv- \nabla \eta  )_{\calT_h} \nn\\
& \quad +( \nabla \cdot(\bfu -\bfPi^{\rm div}_h \bfu ) , \nabla \cdot(\bfv - \nabla \eta) )_{\calT_h} 
- \langle \nabla \cdot (\bfu -\bfPi^{\rm div}_h \bfu), (\bfv - \nabla \eta) \cdot \bfn \rangle_{\partial \calT_h}\nn\\
&\quad -\theta ( \bfu -  \bfPi^{\rm div}_h \bfu -  \nabla( \xi-\pi_h \xi ), \nabla (\nabla \cdot \bfv))_{\calT_h},
\end{align*}
\begin{align*}
S_2 & = -( \nabla \cdot( \bfw -  \bfPi^{\rm div}_h \bfw ),\nabla \cdot \bfz )_{\calT_h}  - \tau h^{-2} ( \bfw -  \bfPi^{\rm div}_h \bfw  - \nabla( \psi-\pi_h \psi ), \bfz - \nabla \phi  )_{\calT_h} \nn\\
& \quad +( \nabla \cdot(\bfw -\bfPi^{\rm div}_h \bfw ) , \nabla \cdot(\bfz - \nabla \phi) )_{\calT_h} - \langle \nabla \cdot (\bfw -\bfPi^{\rm div}_h \bfw), (\bfz - \nabla \phi) \cdot \bfn \rangle_{\partial \calT_h}\nn\\
&\quad - \theta ( \bfw -  \bfPi^{\rm div}_h \bfw -  \nabla( \psi-\pi_h \psi ), \nabla (\nabla \cdot \bfz))_{\calT_h},
\end{align*}
\begin{align*}
S_3 =  - \left( {\rm cof}(D^2 \xi) \nabla \psi- {\rm cof}(D^2 \xi_h) \nabla \psi_h,\nabla \eta \right)_{\calT_h},
\end{align*}
\begin{align*}
S_4 =  \left( {\rm cof}(D^2 \xi) \nabla \xi -{\rm cof}(D^2 \xi_h) \nabla \xi_h,\nabla \phi  \right)_{\calT_h}.
\end{align*}
\end{lem}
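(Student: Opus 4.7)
The proof parallels Lemma~\ref{lem_err_bih} applied separately to each of the two biharmonic-type equations in \eqref{equ_von_karman}, with the nonlinear von K\'arm\'an brackets converted to divergence-free terms via \eqref{lem_equ_pre_vk}. The plan is to derive a consistency identity for each VK equation, subtract the corresponding discrete equation, and then sum the two resulting error equations.

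First I would specialize the derivation in the proof of Lemma~\ref{lem_err_bih} to $\kappa\equiv 1$ and use the regularity \eqref{ass_reg_vk}. Testing $\Delta^{2}\xi = f + [\xi,\psi]$ against any $\eta\in V_{h}$, invoking density of $C_{0}^{\infty}(\Omega)$ in $H^{2-\beta}_{0}(\Omega)$ and element-wise integration by parts exactly as in \eqref{consistent_biharmonic}, I obtain for any $\bfv\in\bfW_{h}$ the consistency identity
\begin{align*}
(\nabla\!\cdot\!\bfu, \nabla\!\cdot\!\bfv)_{\calT_{h}} - (\nabla\!\cdot\!\bfu, \nabla\!\cdot\!(\bfv - \nabla\eta))_{\calT_{h}} + \langle \nabla\!\cdot\!\bfu, (\bfv-\nabla\eta)\!\cdot\!\bfn\rangle_{\partial\calT_{h}} = (f,\eta) + ([\xi,\psi],\eta).
\end{align*}
Applying \eqref{lem_equ_pre_vk} rewrites the bracket as $-({\rm cof}(D^{2}\xi)\nabla\psi,\nabla\eta)$. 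The same procedure applied to $\Delta^{2}\psi = -[\xi,\xi]$ together with \eqref{lem_equ_pre_vk} yields an analogous identity for $(\bfw,\psi)$ with right-hand side $({\rm cof}(D^{2}\xi)\nabla\xi,\nabla\phi)$.

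Next, since $\bfu=\nabla\xi$ and $\bfw=\nabla\psi$, the stabilization and $\theta$-terms in $B_{\theta}$ vanish identically on the exact pairs, so the two consistency identities can be written compactly as
\begin{align*}
B_{\theta}((\bfu,\xi),(\bfv,\eta)) + ({\rm cof}(D^{2}\xi)\nabla\psi,\nabla\eta)_{\calT_{h}} &= (f,\eta),\\
B_{\theta}((\bfw,\psi),(\bfz,\phi)) - ({\rm cof}(D^{2}\xi)\nabla\xi,\nabla\phi)_{\calT_{h}} &= 0.
\end{align*}
Subtracting \eqref{sta_method_vk} from each of these, splitting $\bfu-\bfu_{h} = (\bfu-\bfPi^{\rm div}_{h}\bfu)+\bfe_{\bfu}$ and $\xi-\xi_{h} = (\xi-\pi_{h}\xi)+e_{\xi}$, and using bilinearity of $B_{\theta}$, I would isolate $B_{\theta}((\bfe_{\bfu},e_{\xi}),(\bfv,\eta))$ and $B_{\theta}((\bfe_{\bfw},e_{\psi}),(\bfz,\phi))$ on the left. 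The projection parts on the right, after an element-wise integration by parts on the term $(\nabla(\nabla\!\cdot\!(\bfu-\bfPi^{\rm div}_{h}\bfu)),\bfv-\nabla\eta)_{\calT_{h}}$ exactly as in the derivation of \eqref{equ_err} with $\kappa=1$, reassemble into $S_{1}$ and $S_{2}$, while the bracket residues give $S_{3}$ and $S_{4}$ verbatim. Summing the two identities produces \eqref{error_eq_vk}.

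The main obstacle is the verification that $\nabla\!\cdot\!\bfu \in H^{\beta}(\Omega)$ and $\nabla\!\cdot\!\bfw \in H^{\beta}(\Omega)$ with $\beta>1/2$ is enough regularity to make sense of all distributional boundary couplings on $\partial\calT_{h}$ and of the duality pairings $\langle\nabla(\nabla\!\cdot\!\bfu),\bfv-\nabla\eta\rangle$; this is handled by the same approximation-by-smooth-functions argument used in Lemma~\ref{lem_err_bih}, and requires no new ingredient. The bookkeeping to recognize the projection-error terms as precisely $S_{1}$ and $S_{2}$ is essentially a verbatim transcription of the biharmonic computation, so the passage to the VK setting is purely algebraic once \eqref{lem_equ_pre_vk} has been invoked.
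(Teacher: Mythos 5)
Your proposal is correct and takes essentially the same route as the paper, whose proof of Lemma~\ref{lemma_erq_vk} is precisely the one-line observation that the error equation follows by applying the derivation of Lemma~\ref{lem_err_bih} (with $\kappa=1$) to (\ref{sta_method_vk}), using the bracket identity (\ref{lem_equ_pre_vk}). Your expanded account --- the per-equation consistency identity obtained via the density/smooth-approximation argument, subtraction of the discrete scheme, the splitting of $\bfu-\bfu_h$ and $\xi-\xi_h$ into projection errors plus $(\bfe_{\bfu},e_{\xi})$ (and likewise for $(\bfe_{\bfw},e_{\psi})$), and the element-wise integration by parts that reassembles $S_1$ and $S_2$ while the bracket residues yield $S_3$ and $S_4$ --- is a faithful unpacking of exactly that argument.
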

\begin{proof}
With (\ref{lem_equ_pre_vk}), the desired error equation is obtained by applying similar derivation in 
Lemma \ref{lem_err_bih} to (\ref{sta_method_vk}).
\end{proof}

Below, we discuss the main result of this section.

\begin{thm}
\label{thm_conv_von_karman}
Further under the conditions in Lemma \ref{lemma_erq_vk} and smallness condition on $\|f\|_{H^{-1}(\Omega)},$ 
 if the stabilization parameter $\tau$ 
in (\ref{sta_method_vk}) is large enough when $\theta = 1$ and is arbitrary positive $\tau$ for $\theta=-1,$ 
then following estimate holds:
\begin{align*}
\Vert \xi - \xi_{h} \Vert_{2, \calT_{h}} + \Vert \psi - \psi_{h} \Vert_{2, \calT_{h}}
+\| (\bfu - \bfu_h) \|_{H({\rm div},\Omega)} + \|(\bfw - \bfw_h) \|_{H({\rm div},\Omega)} 
 \leq C h^r\left(\| \bfu \|_{1+r,\Omega} + \| \bfw \|_{1+r,\Omega} \right),
\end{align*}
where $r= \min\{\beta, k+1\}$ and $\beta$ is the parameter given in (\ref{ass_reg_vk}).
\end{thm}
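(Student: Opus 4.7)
The plan is to mirror the proof of Theorem~\ref{main_err1} for the biharmonic case, while using the smallness of $\|f\|_{H^{-1}(\Omega)}$ to control the two nonlinear terms $S_3$ and $S_4$ in the error equation (\ref{error_eq_vk}). First I would test (\ref{error_eq_vk}) with $(\bfv,\bfz,\eta,\phi) = (\bfe_{\bfu},\bfe_{\bfw},e_\xi,e_\psi)$. On the left-hand side, repeating the computation used for (\ref{est_sta_1})--(\ref{est_sta_3}) with $\kappa\equiv 1$ on each of the two summands $B_{\theta}((\bfe_{\bfu},e_\xi),(\cdot,\cdot))$ and $B_{\theta}((\bfe_{\bfw},e_\psi),(\cdot,\cdot))$, and choosing $\tau$ large enough when $\theta=1$ (any $\tau>0$ when $\theta=-1$), gives a coercive lower bound
\begin{align*}
\|\nabla\!\cdot\!\bfe_{\bfu}\|_{\calT_h}^2+h^{-2}\|\bfe_{\bfu}-\nabla e_\xi\|_{\calT_h}^2
+\|\nabla\!\cdot\!\bfe_{\bfw}\|_{\calT_h}^2+h^{-2}\|\bfe_{\bfw}-\nabla e_\psi\|_{\calT_h}^2.
\end{align*}

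Next I would bound $S_1$ and $S_2$ exactly as the terms $T_0,\dots,T_5$ are bounded in the proof of Theorem~\ref{main_err1} (using (\ref{appros_vk}), the trace and inverse inequalities), obtaining $Ch^r(\|\bfu\|_{1+r,\Omega}+\|\bfw\|_{1+r,\Omega})$ times the natural error-norm combination appearing on the left. The decisive new ingredient is the treatment of $S_3$ and $S_4$. Rewrite, for instance,
\begin{align*}
S_3 = -\bigl({\rm cof}\,D^2(\xi-\xi_h)\,\nabla\psi,\nabla e_\xi\bigr)_{\calT_h}
-\bigl({\rm cof}\,D^2\xi_h\,\nabla(\psi-\psi_h),\nabla e_\xi\bigr)_{\calT_h},
\end{align*}
and analogously for $S_4$. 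Using the discrete embedding $\|\nabla v\|_{L^4(\Omega)}\leq C\|v\|_{2,\calT_h}$ from \cite[Theorem~2.1]{Ern2010a} on each factor $\nabla(\cdot)$ and the Frobenius-type bound $\|{\rm cof}\,D^2 v\|_{\calT_h}\leq C\|v\|_{2,\calT_h}$, together with the stability bounds $\|\xi_h\|_{2,\calT_h}+\|\psi_h\|_{2,\calT_h}\leq C\|f\|_{H^{-1}(\Omega)}$ from Theorem~\ref{thm_sta_vk} and the regularity (\ref{ass_reg_vk}), one gets
\begin{align*}
|S_3|+|S_4|\leq C\|f\|_{H^{-1}(\Omega)}\bigl(\|e_\xi\|_{2,\calT_h}^2+\|e_\psi\|_{2,\calT_h}^2\bigr)+Ch^r(\|\bfu\|_{1+r,\Omega}+\|\bfw\|_{1+r,\Omega})\bigl(\|e_\xi\|_{2,\calT_h}+\|e_\psi\|_{2,\calT_h}\bigr),
\end{align*}
where the first term comes from pairings where the rough factor is an error ($e_\xi,e_\psi$) and the second from pairings in which an approximation error provides the $h^r$ factor (using (\ref{appro_3_vk})).

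To close the estimate I would invoke Theorem~\ref{thm_discrete_embedding} applied to $e_\xi$ and $e_\psi$ to bound $\|e_\xi\|_{2,\calT_h}^2+\|e_\psi\|_{2,\calT_h}^2$ by the coercive quantity above. The term proportional to $\|f\|_{H^{-1}(\Omega)}$ can then be absorbed into the left-hand side provided $\|f\|_{H^{-1}(\Omega)}$ is small enough; the remaining terms are handled by Cauchy--Schwarz and Young's inequality, yielding
\begin{align*}
\|\nabla\!\cdot\!\bfe_{\bfu}\|_{\calT_h}+h^{-1}\|\bfe_{\bfu}-\nabla e_\xi\|_{\calT_h}
+\|\nabla\!\cdot\!\bfe_{\bfw}\|_{\calT_h}+h^{-1}\|\bfe_{\bfw}-\nabla e_\psi\|_{\calT_h}\leq Ch^r(\|\bfu\|_{1+r,\Omega}+\|\bfw\|_{1+r,\Omega}).
\end{align*}
A final application of Theorem~\ref{thm_discrete_embedding} (as in (\ref{err_eu_en_est1})), the triangle inequality, and the approximation bounds (\ref{appros_vk}) (together with an $L^2$ bound on $\bfu-\bfu_h$ and $\bfw-\bfw_h$ deduced via $\|\bfe_{\bfu}\|_{\calT_h}\leq \|\bfe_{\bfu}-\nabla e_\xi\|_{\calT_h}+\|\nabla e_\xi\|_{\calT_h}$) then delivers the claimed estimate. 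The main obstacle is the nonlinear terms $S_3,S_4$: establishing that the ``bad'' contributions all carry the factor $\|f\|_{H^{-1}(\Omega)}$ so that they can be absorbed is what forces the smallness hypothesis, and requires careful use of the $L^4$-embedding into the discrete $H^2$-seminorm combined with the $a$~priori bounds from Theorem~\ref{thm_sta_vk}.
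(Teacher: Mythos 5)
Your proposal follows essentially the same route as the paper's proof: test (\ref{error_eq_vk}) with $(\bfe_{\bfu},\bfe_{\bfw},e_\xi,e_\psi)$, reuse the Theorem~\ref{main_err1} machinery for $S_1,S_2$, split $S_3,S_4$ exactly as the paper does (your ${\rm cof}\,D^2(\xi-\xi_h)$ equals the paper's ${\rm cof}(D^2\xi)-{\rm cof}(D^2\xi_h)$ by linearity of the $2\times2$ cofactor), bound the nonlinearities via the discrete $L^4$-embedding of \cite[Theorem~$2.1$]{Ern2010a} and the stability (\ref{main_thm_sta_vk}), and absorb the quadratic error terms using the smallness of $\|f\|_{H^{-1}(\Omega)}$, which is precisely the paper's passage from (\ref{eq:error-1}) to (\ref{eq:error-2}). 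The argument is correct and matches the paper's proof in all essentials.
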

\begin{proof}
Taking $\bfv =\bfe_{\bfu} ,\bfz =\bfe_{\bfw} ,\eta =e_{\xi} ,\phi = e_{\psi}$ in the error equation (\ref{error_eq_vk}) 
and applying the similar technique to deal with the terms $B((\bfe_{\bfu},e_{\xi}),(\bfe_{\bfu},e_{\xi})), 
B((\bfe_{\bfw},e_{\psi}),(\bfe_{\bfw},e_{\psi}))$ 
and $S_1, S_2$, we can obtain
\begin{align} \label{eq:error-1}
\| e_{\xi}\|^2_{2,\calT_h} +  \| e_{\psi}\|^2_{2,\calT_h}
+\| \nabla \cdot \bfe_{\bfu} \|^2_{\calT_h} + \| \nabla \cdot  \bfe_{\bfw}\|^2_{\calT_h}  
\leq Ch^{2r} (\|\bfu\|^2_{1+r,\Omega} + \|\bfw\|^2_{1+r,\Omega} ) +C (|S_3| + |S_4|),
\end{align}
where $S_3$ and $S_4$ are now written as follows:
\begin{align*}
S_3 &= - \left( {\rm cof}(D^2 \xi) \nabla \psi- {\rm cof}(D^2 \xi_h) \nabla \psi_h,\nabla e_{\xi} \right)_{\calT_h},\\
S_4 &=  \left( {\rm cof}(D^2 \xi) \nabla \xi -{\rm cof}(D^2 \xi_h) \nabla \xi_h,\nabla e_{\psi}  \right)_{\calT_h}.
\end{align*}
For $S_3$, we obtain
\begin{align*}
S_3 &= -\left( ({\rm cof}(D^2 \xi) - {\rm cof}(D^2 \xi_h) ) \nabla \psi,\nabla e_{\xi} \right)_{\calT_h}
 - \left( {\rm cof}(D^2 \xi_h) ( \nabla \psi - \nabla\psi_h),\nabla e_{\xi} \right)_{\calT_h}\\
&\leq C \|\nabla \psi\|_{L^{4}(\Omega)}\|{\rm cof}(D^2 \xi)
 - {\rm cof}(D^2 \xi_h) \|_{\calT_h}\|\nabla e_{\xi} \|_{L^{4}(\calT_h)} \\
&\quad + C \|{\rm cof}(D^2 \xi_h)\|_{\calT_h}\|\nabla \psi - \nabla\psi_h\|_{L^4(\calT_h)}
\|\nabla e_{\xi} \|_{L^{4}(\calT_h)}\\
& \leq C  \|\nabla \psi\|_{L^{4}(\Omega)} \| \xi-\xi_h\|_{2,\calT_h} \| e_{\xi}\|_{2,\calT_h}\\
&\quad   + C \|{\rm cof}(D^2 \xi_h)\|_{\calT_h}\| \psi - \psi_h\|_{2,\calT_h}\| e_{\xi} \|_{2,\calT_h}
 \quad \text{(by \cite[Theorem $2.1$]{Ern2010a})}\\
& \leq C  \|\nabla \psi\|_{L^{4}(\Omega)}(  \| \xi-\pi_h \xi\|_{2,\calT_h} + \| e_{\xi}\|_{2,\calT_h}   )\| e_{\xi}\|_{2,\calT_h}\\
&\quad + C  \|{\rm cof}(D^2 \xi_h)\|_{\calT_h}(  \| \psi - \pi_h \psi\|_{2,\calT_h}  + \| e_{\psi} \|_{2,\calT_h})
\| e_{\xi} \|_{2,\calT_h} \\
& \leq C  \|\psi\|^2_{H^{2}(\Omega)}\Big( h^r \|\xi\|_{2+r} \;\| e_{\xi}\|_{2,\calT_h} 
+ \| e_{\xi}\|^2_{2,\calT_h} \Big)\\
&\quad +C \|{\rm cof}(D^2 \xi_h)\|_{\calT_h} \Big(  
 h^r \|\psi \|_{2+r,\Omega} \| e_{\xi}\|_{2,\calT_h}
  +  \| e_{\psi} \|_{2,\calT_h}\| e_{\xi} \|_{2,\calT_h}\Big).
\end{align*}
Similarly, for $S_4$, we establish
\begin{align*}
S_4  &= \left( ({\rm cof}(D^2 \xi) -{\rm cof}(D^2 \xi_h) ) \nabla \xi ,\nabla e_{\psi}  \right)_{\calT_h} + \left( {\rm cof}(D^2 \xi_h) (\nabla \xi  -  \nabla  \xi_h),\nabla e_{\psi}  \right)_{\calT_h}\\
&\leq C \|\nabla \xi\|_{L^{4}(\Omega)}\|{\rm cof}(D^2 \xi) -{\rm cof}(D^2 \xi_h)\|_{\calT_h} 
\|\nabla e_{\psi}  \|_{L^{4}(\calT_h)} \\
&\quad +C \|{\rm cof}(D^2 \xi_h)\|_{\calT_h} \|\nabla \xi  -  \nabla  \xi_h\|_{L^4(\calT_h)}\|\nabla e_{\psi}  \|_{L^4(\calT_h)}\\
& \leq C  \|\nabla \xi\|_{L^{4}(\Omega)} \| \xi-\xi_h\|_{2,\calT_h}  \| e_{\psi}  \|_{2,\calT_h}\\
&\quad + C\|{\rm cof}(D^2 \xi_h)\|_{\calT_h}  \| \xi  -  \xi_h\|_{2,\calT_h}\| e_{\psi}  \|_{2,\calT_h}
\quad \text{(by \cite[Theorem $2.1$]{Ern2010a})}\\
&\leq C (\|\nabla \xi\|_{L^{4}(\Omega)} +\|{\rm cof}(D^2 \xi_h)\|_{\calT_h}   )( \| \xi-\pi_h\xi \|_{2,\calT_h}
 + \| e_{\xi}\|_{2,\calT_h}  ) \| e_{\psi}  \|_{2,\calT_h}\\ 
&\leq C(\|\nabla \xi\|_{L^{4}(\Omega)} +\|{\rm cof}(D^2 \xi_h)\|_{\calT_h}   )(h^r\|\nabla \xi\|_{1+r,\Omega}
 + \| e_{\xi}\|_{2,\calT_h} ) \| e_{\psi}  \|_{2,\calT_h}.
\end{align*}
On substitution in (\ref{eq:error-1}), and using regularity property (\ref{ass_reg_vk}) with stability result
(\ref{main_thm_sta_vk}), we arrive at 
\begin{align} \label{eq:error-2}
(1-C \|f\|_{H^{-1}(\Omega)}) \Big(\| e_{\xi}\|^2_{2,\calT_h} +  \| e_{\psi}\|^2_{2,\calT_h}\Big)
+\| \nabla \cdot \bfe_{\bfu} \|^2_{\calT_h} + \| \nabla \cdot  \bfe_{\bfw}\|^2_{\calT_h}  
\leq Ch^{2r} \;\Big(\|\bfu\|^2_{1+r,\Omega} + \|\bfw\|^2_{1+r,\Omega}\Big).
\end{align}
Choose $\|f\|_{H^{-1}(\Omega)}$ is small enough so that $(1-C \|f\|_{H^{-1}(\Omega)})>0$  and
this completes the rest of the proof.
\end{proof}

\section{Numerical experiments}\label{sec:numer_example}
In this section, we present numerical results that illustrate the efficiency and accuracy of the newly proposed mixed finite element schemes for the biharmonic equation and von K\'arm\'an model respectively. In the following we focus on the numerical experiments in two dimensional cases. The grids we use to test are always assumed to be shape regular and quasi-uniform. In the descretization, we always use the BDM finite element space as vector function space and continuous polynomial space as scalar function space. For brevity, the mixed finite element space used in the following computation is denoted by ${BDM}_k$-${P}_{k+1}$ with $k\geq 1$. The computation is performed with FEniCS \cite{FEniCS}. We remark that our algorithm can be straightforwardly extended to the problem with non-homogeneous boundary conditions and other types of von K\'arm\'an model such as shown in \cite{Brenner2017,Carstensen19}.

Firstly, we presents some numerical results for the biharmonic equation.

\begin{example}\label{example1}
We first test our algorithm for the biharmonic equation (\ref{equ_biharmonic}) with $\kappa=1$ on a unit square domain $[0,1]^2$ with exact solution 
\[
u = x^2 (1-x)^2 y^2 (1-y)^2.
\]
The source term $f$ can be determined by the above exact solution. For the objective of flexibility in the design of our algorithm, we use an optional parameter $\theta$ in the variational formulation (\ref{sta_fem_biharmonic}) for the biharmonic equation.

\begin{table}[htbp]\renewcommand{\arraystretch}{1.2}
\footnotesize
\begin{tabular}{c|c c c c c}
\hline 
Errors &$\|e_u\|_{\Ct_h}$ & $\| \nabla e_u\|_{\Ct_h} $&$ \|e_{\bfw}\|_{\Ct_h} $&$\|\nabla \cdot e_{\bfw}\|_{\Ct_h} $ &$\|\nabla e_u\|_{1,\Ct_h} $\tabularnewline
\hline  
$\theta = -1,1$&8.89e-7& 4.24e-6  & 3.15e-6  & 9.11e-4 & 1.15e-3 \tabularnewline
\hline 
\end{tabular}
\smallskip
\smallskip
\caption{\footnotesize (Example \ref{example1}) Errors with respect to different choices of $\theta = -1,1$ and 
fixed $\tau =10$ on a fixed mesh with mesh size $h=0.011$ based on ${BDM}_1$-${P}_{2}$.}\label{table1}
\end{table}

\begin{table}[htbp]\renewcommand{\arraystretch}{1.2}
\footnotesize
\begin{tabular}{c|c c c c c}
\hline 
Errors &$\|e_u\|_{\Ct_h}$ & $\| \nabla e_u\|_{\Ct_h} $&$ \|e_{\bfw}\|_{\Ct_h} $&$\|\nabla \cdot e_{\bfw}\|_{\Ct_h} $ &$\|\nabla e_u\|_{1,\Ct_h} $\tabularnewline
\hline  
$\tau = 1$ & 1.21e-5 & 5.41e-5  & 2.34e-5 & 9.10e-4 & 1.19e-3 \tabularnewline
\hline 
$\tau = 10$& 8.89e-7 & 4.24e-6 & 3.15e-6  & 9.11e-4 & 1.15e-3 \tabularnewline
\hline 
$\tau = 50$& 2.27e-7 & 2.14e-6 & 2.70e-6  & 9.32e-4 & 1.15e-3 \tabularnewline
\hline 
$\tau = 100$& 4.45e-7 & 3.27e-6 & 3.54e-6  & 9.56e-4 & 1.15e-3 \tabularnewline
\hline 
$\tau = 200$& 7.45e-7 & 5.37e-6  & 5.48e-6 & 9.83e-4  & 1.15e-3 \tabularnewline
\hline 
$\tau = 300$& 1.02e-6 & 7.46e-6  & 7.52e-6 & 9.99e-4 & 1.15e-3 \tabularnewline
\hline 
\end{tabular}
\smallskip
\smallskip
\caption{\footnotesize (Example \ref{example1}) Errors with respect to different choices of $\tau = 1,10,50,100,200,300$ and fixed $\theta =1$ on a fixed mesh with mesh size $h=0.011$ based on ${BDM}_1$-${P}_{2}$.}\label{table2}
\end{table}

In order to test the influence of the choice of $\theta$ and $\tau$, we test this example based on the ${BDM}_1$-${P}_{2}$ mixed finite element space on a fixed mesh $\Ct_h$ with mesh size $h=0.011$. We denote $e_u = u-u_h$ and $e_{\bfw} = \bfw -\bfw_h$. Firstly, we test different choices of $\theta = -1,1$ and fixed $\tau =10$, and we always have the errors as shown in Table \ref{table1}. Actually, we also test other choices of $\theta = -0.5, 0, 0.5$, and we get the same errors as in Table \ref{table1}. Thus, for this example with smooth solution, we find that the influence of different choices of $\theta$ is small. In order to mainly test the accuracy and efficiency of the proposed algorithm, we always use $\theta = 1$ in the following tests. Next, we test the influence of different choices of $\tau = 1,10, 50, 100, 200, 300$ and $\theta=1$. From the viewpoint of theoretical analysis, the parameter $\tau$ should be chosen to be large enough. However, from Table {\ref{table2}} we can see that $\tau$ can only be mildly large to get the desired results. We let $\tau=10$ in the following tests for this example.

\begin{table}[htbp]\renewcommand{\arraystretch}{1.2}
\footnotesize
\begin{tabular}{c|c c | c c | c c| c c | c c}
\hline 
$h$ &$\|e_u\|_{\Ct_h}$ & {\rm order} & $\| \nabla e_u\|_{\Ct_h} $& order &$ \|e_{\bfw}\|_{\Ct_h} $& order  &$\|\nabla \cdot e_{\bfw}\|_{\Ct_h} $  & order  &$\|\nabla e_u\|_{1,\Ct_h} $ &  order \tabularnewline
\hline  
$0.3536$& 9.28e-4 & -- & 4.39e-3  & --  & 3.03e-3 & -- &2.82e-2  & --& 4.16e-2 & --  \tabularnewline 
$0.1768$& 2.29e-4 & 2.02 & 1.09e-3 & 2.01  & 7.89e-4  & 1.94  & 1.45e-2& 0.96 & 1.92e-2   & 1.12 \tabularnewline 
$0.0884$& 5.70e-5 & 2.01 & 2.72e-4  & 2.00  & 2.00e-4  & 1.98  & 7.27e-3& 1.00 & 9.30e-3 &1.05  \tabularnewline 
$0.0442$& 1.42e-5 &2.01 & 6.78e-5  &2.00   & 5.03e-5 &1.99   & 3.64e-3& 1.00 &4.61e-3 &  1.01 \tabularnewline 
$0.0221$& 3.56e-6 & 2.00& 1.70e-5 & 2.00  & 1.26e-5  & 2.00  & 1.82e-3&1.00  & 2.30e-3&  1.00 \tabularnewline 
$0.0110$& 8.89e-7 & 2.00& 4.24e-6  & 2.00  & 3.15e-6  & 2.00  & 9.11e-4&1.00  &1.15e-3 & 1.00 \tabularnewline 
$0.0055$& 2.22e-7 &2.00 & 1.05e-6  &2.01   & 7.88e-7 &  2.00 & 4.56e-4 &1.00 & 5.74e-4 &1.00  \tabularnewline 
\hline 
\end{tabular}
\smallskip
\smallskip
\caption{\footnotesize (Example \ref{example1}) Convergence history based on ${BDM}_1$-${P}_{2}$.}\label{table3}
\end{table}

\begin{table}[htbp]\renewcommand{\arraystretch}{1.2}
\footnotesize
\begin{tabular}{c|c c | c c | c c| c c | c c}
\hline 
$h$ &$\|e_u\|_{\Ct_h}$ & {\rm order} & $\| \nabla e_u\|_{\Ct_h} $& order &$ \|e_{\bfw}\|_{\Ct_h} $& order  &$\|\nabla \cdot e_{\bfw}\|_{\Ct_h} $  & order  &$\|\nabla e_u\|_{1,\Ct_h} $ &  order \tabularnewline
\hline  
$0.3536$& 1.08e-4 &   -- & 1.32e-3  & --  & 1.67e-3 & -- & 7.27e-3  & --&  2.80e-2 & --  \tabularnewline 
$0.1768$& 7.16e-6 & 3.91 & 1.71e-4  &2.95  & 2.15e-4 & 2.96  &1.85e-3  & 1.97 & 7.13e-3 & 1.97    \tabularnewline 
$0.0884$& 4.57e-7 & 3.97 & 2.17e-5 & 2.98  & 2.71e-5  &2.99     & 4.67e-4& 1.99  & 1.75e-3   & 2.03  \tabularnewline 
$0.0442$& 2.88e-8 & 3.99  & 2.74e-6  & 2.99   & 3.39e-6  & 3.00    & 1.17e-4& 2.00  & 4.30e-4 & 2.02  \tabularnewline 
$0.0221$& 1.83e-9 & 3.98 & 3.44e-7  & 2.99   & 4.23e-7 & 3.00   & 2.92e-5& 2.00  &1.06e-4 &2.02    \tabularnewline 
\hline 
\end{tabular}
\smallskip
\smallskip
\caption{\footnotesize (Example \ref{example1}) Convergence history based on ${BDM}_2$-${P}_{3}$.}\label{table4}
\end{table}

We further test this example based on the mixed finite element spaces ${BDM}_1$-${P}_{2}$ and ${BDM}_2$-${P}_{3}$ respectively. From Tables \ref{table3}-\ref{table4}, we can see that the errors always achieve almost optimal orders of convergence which are consistent with the theoretical analysis.

\end{example}

\begin{example}\label{example2}
In this example we test our algorithm for the biharmonic equation (\ref{equ_biharmonic}) without exact solution in two L-shape type domains $\Omega_1 = [1,2]^2\setminus [1,1.5]^2$ and $\Omega_2 = [1,2]^2\setminus ([1,4/3]\times [4/3,5/3] \cup [1,5/3]\times [1,4/3])$. Let $\kappa=1$. By an elliptic regularity result for the clamped Kirchhoff plate (cf. \cite[Lemma 1.1]{Brenner2017},\cite{BR1980}), the parameter $\delta$ given in (\ref{reg_ass}) holds that $\delta\in(1/2,1)$ for the solutions in this example. Due to the low regularity property of the solutions in this example, we only consider the approximation of the solutions based on the lowest order of mixed finite element space ${BDM}_1$-${P}_{2}$ in the proposed algorithm. 

\begin{figure}[htbp]
\begin{center}
\includegraphics[width=7cm,height=5.4cm,clip,trim=0cm 0cm 0cm 0cm]{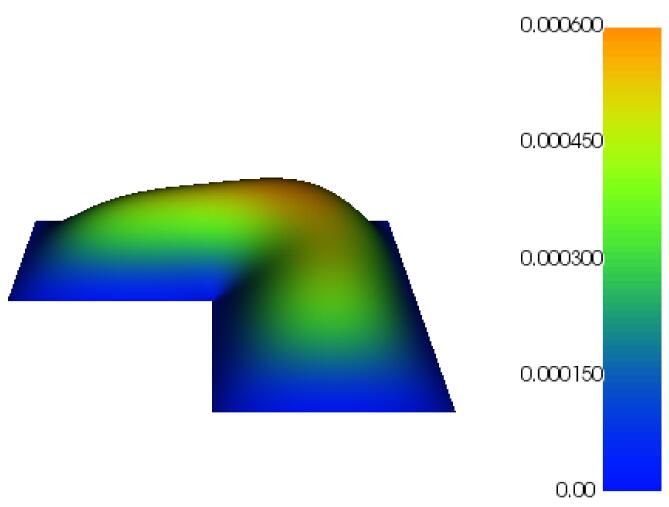}
\includegraphics[width=7cm,height=5.4cm,clip,trim=0cm 0cm 0cm 0cm]{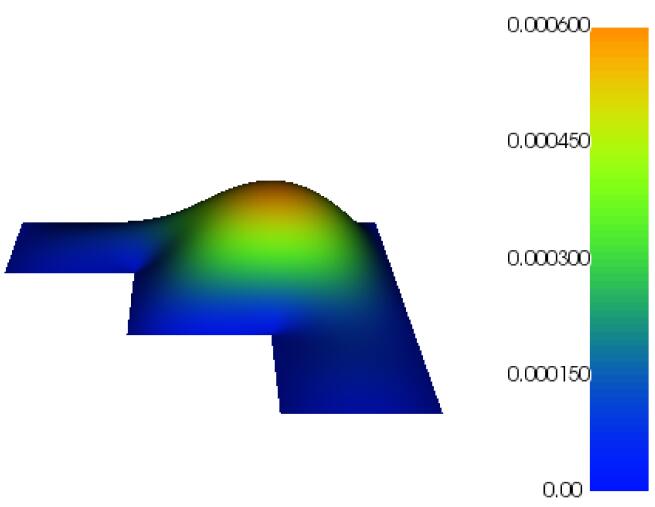}
\end{center}
\caption{\footnotesize (Example \ref{example2}) Left: The solution $u_h$ for the problem in $\Omega_1$ on the mesh with mesh size $h=0.00435$. Right: The solution $u_h$ for the problem in $\Omega_2$ on the mesh with mesh size $h=0.00435$.}\label{ex2_fig}
\end{figure}

\begin{table}[htbp]\renewcommand{\arraystretch}{1.2}
\footnotesize
\begin{tabular}{c|c c | c c | c c| c c | c c}
\hline 
$h$ &$\|e^\ast_u\|_{\Ct_h}$ & {\rm order} & $\| \nabla e^\ast_u\|_{\Ct_h} $& order &$ \|e^\ast_{\bfw}\|_{\Ct_h} $& order  &$\|\nabla \cdot e^\ast_{\bfw}\|_{\Ct_h} $  & order  &$\|\nabla e^\ast_u\|_{1,\Ct_h} $ &  order \tabularnewline
\hline  
$0.2841$& 7.96e-5 & -- & 5.68e-4  & --  & 6.38e-4 & -- & 9.83e-3  & --& 8.76e-3 & --  \tabularnewline 
$0.1368$& 2.17e-5 & 1.88  & 1.58e-4 & 1.85    & 1.75e-4  & 1.87    & 5.65e-3 & 0.80  &  5.34e-3   & 0.71  \tabularnewline 
$0.0680$& 7.95e-6 &  1.45 & 5.79e-5  & 1.45    & 6.27e-5  &  1.48  & 3.93e-3& 0.52  &  3.94e-3 & 0.44   \tabularnewline 
$0.0345$& 3.89e-6 & 1.03  & 2.74e-5  & 1.08   & 2.95e-5 & 1.09   &  2.94e-3&  0.42 & 2.80e-3 & 0.49    \tabularnewline 
\hline 
\end{tabular}
\smallskip
\smallskip
\caption{\footnotesize (Example \ref{example2}) Convergence history for the problem in $\Omega_1$.}\label{ex2_table1}
\end{table}

\begin{table}[htbp]\renewcommand{\arraystretch}{1.2}
\footnotesize
\begin{tabular}{c|c c | c c | c c| c c | c c}
\hline 
$h$ &$\|e^\ast_u\|_{\Ct_h}$ & {\rm order} & $\| \nabla e^\ast_u\|_{\Ct_h} $& order &$ \|e^\ast_{\bfw}\|_{\Ct_h} $& order  &$\|\nabla \cdot e^\ast_{\bfw}\|_{\Ct_h} $  & order  &$\|\nabla e^\ast_u\|_{1,\Ct_h} $ &  order \tabularnewline
\hline  
$0.2417$& 3.83e-5 & --      & 2.88e-4  & --  &       3.26e-4 & -- &          7.03e-3  & --& 6.48e-3 & --  \tabularnewline 
$0.1320$& 1.69e-5 & 1.18   & 1.32e-4 & 1.13     & 1.43e-4  &  1.19    & 5.01e-3 & 0.49   &  5.14e-3   & 0.33   \tabularnewline 
$0.0684$& 9.11e-6 & 0.89   & 6.44e-5  & 1.04      & 6.85e-5  &  1.06   & 3.63e-3& 0.46   &  3.53e-3 & 0.54    \tabularnewline 
$0.0347$& 3.50e-6 & 1.38   & 2.54e-5  &  1.34   & 2.71e-5 & 1.34    &  2.64e-3&  0.46  & 2.56e-3 & 0.46     \tabularnewline 
$0.0172$& 1.49e-6 &  1.23   & 1.11e-5  & 1.19    & 1.15e-5 &  1.24   &  1.70e-3&  0.64  & 1.73e-3 &  0.57    \tabularnewline 
\hline 
\end{tabular}
\smallskip
\smallskip
\caption{\footnotesize (Example \ref{example2}) Convergence history for the problem in $\Omega_2$.}\label{ex2_table2}
\end{table}

We test the case with source term $f=1$ and set the parameter $\tau=200$. Figure \ref{ex2_fig} shows the solutions of the problems in $\Omega_1$ and $\Omega_2$ respectively on the finest mesh with mesh size $h=0.00435$. Let $u^\ast$ and $\bfw^\ast$ be the approximation solutions on the finest mesh. We denote the errors $e^\ast_u = u^\ast - u_h$ and $e^\ast_\bfw = \bfw^\ast - \bfw_h$. Tables \ref{ex2_table1}-\ref{ex2_table2} indicate that the errors also achieve almost optimal orders of convergence.
\end{example}

\begin{example}\label{example3}
In this example we test our algorithm for the biharmonic equation (\ref{equ_biharmonic}) with variable coefficient which can also be assumed to satisfy the non-homogeneous boundary conditions. We consider the biharmonic equation (\ref{equ_biharmonic1}) on a unit square domain $[0,1]^2$ with the exact solution as follows:
\[
u = sin(2\pi x) sin(2\pi y).
\]
We assume $\kappa(\bfx) = x^2+y^2+1$, then the source term $f$ and the boundary conditions can be determined by the exact solution and the coefficient $\kappa(\bfx)$.

\begin{table}[htbp]\renewcommand{\arraystretch}{1.2}
\footnotesize
\begin{tabular}{c|c c | c c | c c| c c | c c}
\hline 
$h$ &$\|e_u\|_{\Ct_h}$ & {\rm order} & $\| \nabla e_u\|_{\Ct_h} $& order &$ \|e_{\bfw}\|_{\Ct_h} $& order  &$\|\nabla \cdot e_{\bfw}\|_{\Ct_h} $  & order  &$\|\nabla e_u\|_{1,\Ct_h} $ &  order \tabularnewline
\hline  
$0.3536$&  6.96e-1& -- & 6.57  & --               & 1.51 & --            &19.42  & --& 64.27 & --  \tabularnewline 
$0.1768$& 1.78e-1 &1.97   & 1.63 & 2.01   &  4.27e-1 & 1.82   & 10.19 &    0.93 & 21.18  &  1.60  \tabularnewline 
$0.0884$& 4.46e-2 &  2.00 & 4.04e-1  & 2.01   & 1.11e-1  &1.94    & 5.15  & 0.98   & 7.94 &  1.42  \tabularnewline 
$0.0442$& 1.11e-2& 2.01  & 1.01e-1 & 2.00   & 2.79e-2&   1.99 & 2.58  &  1.00  &3.52  &   1.17 \tabularnewline 
$0.0221 $& 2.78e-3 & 2.00  & 2.52e-2 & 2.00   & 7.00e-3  &  1.99  & 1.29  & 1.00  & 1.70 &1.05    \tabularnewline 
$0.0110 $& 6.95e-4 &2.00  & 6.29e-3  & 2.00   & 1.75e-3  &  2.00  & 6.46e-1 &  1.00  &8.42e-1 &  1.01 \tabularnewline 
$0.0055 $& 1.74e-4 & 2.00  & 1.57e-3 & 2.00   & 4.38e-4& 2.00   & 3.23e-1 &  1.00 & 4.20e-1 & 1.00  \tabularnewline 
\hline 
\end{tabular}
\smallskip
\smallskip
\caption{\footnotesize (Example \ref{example3}) Convergence history based on ${BDM}_1$-${P}_{2}$.}\label{ex3_table1}
\end{table}

\begin{table}[htbp]\renewcommand{\arraystretch}{1.2}
\footnotesize
\begin{tabular}{c|c c | c c | c c| c c | c c}
\hline 
$h$ &$\|e_u\|_{\Ct_h}$ & {\rm order} & $\| \nabla e_u\|_{\Ct_h} $& order &$ \|e_{\bfw}\|_{\Ct_h} $& order  &$\|\nabla \cdot e_{\bfw}\|_{\Ct_h} $  & order  &$\|\nabla e_u\|_{1,\Ct_h} $ &  order \tabularnewline
\hline  
$0.3536$& 1.94e-1 &   --          & 2.64   & --              & 2.27  & --        & 7.06   & --&  61.94  & --  \tabularnewline 
$0.1768$& 1.44e-2& 3.75     &   3.46e-1 & 2.93  &  3.21e-1 & 2.82    & 1.83 & 2.05    &14.03   & 2.14     \tabularnewline 
$0.0884$& 9.43e-4 &  3.93   & 4.38e-2 & 2.98     &  4.13e-2  &  2.96    & 4.58e-1& 2.00   & 3.34  &2.07    \tabularnewline 
$0.0442$& 5.97e-5 &  3.98    & 5.49e-3  & 3.00    & 5.21e-3  &  2.99    & 1.15e-1 &  1.99  & 8.22e-1 & 2.02   \tabularnewline 
$0.0221$& 3.74e-6 & 4.00    & 6.87e-4  &3.00     & 6.52e-4 &  3.00   &  2.86e-2& 2.01   &2.05e-1 &  2.00   \tabularnewline 
$0.0110$& 2.39e-7 & 3.97    & 8.59e-5 &  3.00   & 8.15e-5 &  3.00   &  7.16e-3 & 2.00   &5.11e-2 & 2.00    \tabularnewline 
\hline 
\end{tabular}
\smallskip
\smallskip
\caption{\footnotesize (Example \ref{example3}) Convergence history based on ${BDM}_2$-${P}_{3}$.}\label{ex3_table2}
\end{table}

We test our algorithm with $\theta=1$ and $\tau=10$ for this example based on the mixed finite element spaces ${BDM}_1$-${P}_{2}$ and ${BDM}_2$-${P}_{3}$ respectively. From Tables \ref{ex3_table1}-\ref{ex3_table2}, we can see that for this example, our algorithm always achieve almost optimal convergence.
\end{example}

Next, we start to test our algorithm for the solution of von K\'arm\'an equation (\ref{equ_von_karman}). For simplicity, in the following experiments we always test the proposed algorithm based on the mixed finite element space ${BDM}_1$-${P}_{2}$. Actually, the new algorithm can be easily extended for the solution of the von K\'arm\'an model (cf.  \cite{Brenner2017}) as follows:
\begin{subequations}\label{equ_von_karman_v1}
\begin{align}
\Delta^2 \xi - [\xi,\psi] + p \Delta \xi = f, &  \ \ \textrm{in}  \ \ \Omega, \label{vk1_eq1}\\
\Delta^2 \psi  + [\xi,\xi]= g, &  \ \ \textrm{in}  \ \ \Omega,\label{vk1_eq2} \\
\xi =\frac{\partial \xi}{\partial n}  = 0, &  \ \ \textrm{on}  \ \  \partial \Omega, \label{vk1_bc1}\\
\psi = \frac{\partial \psi}{\partial n} = 0, &  \ \ \textrm{on}  \ \  \partial \Omega,\label{vk1_bc2}
\end{align}
\end{subequations}
where $p$ is a given positive constant and the boundary conditions can also be non-homogeneous. Similar to (\ref{sta_method_vk}), we seek an approximation 
$(\bfu_h,\bfw_h,\xi_h,\psi_h)\in \bfW_h\times \bfW_h\times V_h \times V_h$ such that
\begin{subequations}\label{sta_method_vk_1}
\begin{align}
B_{\theta}((\bfu_h,\xi_h),(\bfv,\eta))+({\rm cof}(D^2\xi_h)\nabla \psi_h,\nabla \eta)_{\calT_h} - (p \nabla \xi_h, \eta)  &= (f,\eta)_{\calT_h},\\
B_{\theta} ((\bfw_h,\psi_h),(\bfz,\phi)) -({\rm cof}(D^2\xi_h)\nabla \xi_h,\nabla \phi)_{\calT_h} &= (g,\phi)_{\calT_h},
\end{align}
\end{subequations}
for any $(\bfv,\bfz,\eta,\phi) \in \bfW_h\times \bfW_h\times V_h \times V_h$. The well-posedness of (\ref{sta_method_vk_1}) and the associated error estimates can be similarly analyzed.

\begin{example}\label{example4}
In this example we test the proposed algorithm based on the formulation (\ref{sta_method_vk_1}) with $\theta =1$ for the von K\'arm\'an equation (\ref{equ_von_karman_v1}) with $p=0$ on a unit square domain $[0,1]^2$. The exact solution $(\xi,\psi)$ is assumed to be
\[
\xi = x^2(1-x)^2y^2(1-y)^2,\quad \psi = \sin^2(\pi x) \sin^2(\pi y),
\] 
and the source term $f$ in (\ref{vk1_eq1}) and the right-hand side $g$ in (\ref{vk1_eq2}) can be determined respectively. For the solution of nonlinear system, one can use nonlinear system solver such as Picard iteration, Newton iteration. In the following experiments, we always apply Picard iteration with zero initial guess to solve the nonlinear system.

\begin{table}[htbp]\renewcommand{\arraystretch}{1.2}
\footnotesize
\begin{tabular}{c|c c | c c | c c| c c | c c}
\hline 
$h$ &$\|e_\xi\|_{\Ct_h}$ & {\rm order} & $\| \nabla e_\xi\|_{\Ct_h} $& order &$ \|e_{\bfu}\|_{\Ct_h} $& order  &$\|\nabla \cdot e_{\bfu}\|_{\Ct_h} $  & order  &$\|\nabla e_\xi\|_{1,\Ct_h} $ &  order \tabularnewline
\hline  
$0.3536$&  2.06e-3 & -- &  9.55e-3  & --     &  3.69e-3 & --        & 3.43e-2  & --&  6.14e-2 & --  \tabularnewline 
$0.1768$& 4.26e-4 &2.27  & 1.95e-3& 2.29  & 8.17e-4 & 2.18  & 1.50e-2&1.19  & 2.10e-2  &1.55   \tabularnewline 
$0.0884$& 1.01e-4 & 2.08  & 4.64e-4  &  2.07 & 1.99e-4  & 2.04  & 7.33e-3&1.03  & 9.51e-3  &1.14    \tabularnewline 
$0.0442$& 2.50e-5 & 2.01 &  1.15e-4 & 2.01  & 4.95e-5 & 2.01   & 3.65e-3 & 1.01  &  4.63e-3 & 1.04    \tabularnewline 
$0.0221$& 6.24e-6 & 2.00   & 2.86e-5& 2.01    & 1.24e-5 &2.00   & 1.82e-3 & 1.00   &  2.30e-3 &  1.01   \tabularnewline 
$0.0110$& 1.56e-6 & 2.00  & 7.14e-6  &  2.00  & 3.09e-6 &  2.00  & 9.11e-4 &1.00    &  1.15e-3 &1.00    \tabularnewline 
\hline 
\end{tabular}
\smallskip
\smallskip
\caption{\footnotesize (Example \ref{example4}) Convergence history for the approximation of $\xi$ and $\bfu$.}\label{ex4_table1}
\end{table}

\begin{table}[htbp]\renewcommand{\arraystretch}{1.2}
\footnotesize
\begin{tabular}{c|c c | c c | c c| c c | c c}
\hline 
$h$ &$\|e_\psi\|_{\Ct_h}$ & {\rm order} & $\| \nabla e_\psi\|_{\Ct_h} $& order &$ \|e_{\bfw}\|_{\Ct_h} $& order  &$\|\nabla \cdot e_{\bfw}\|_{\Ct_h} $  & order  &$\|\nabla e_\psi\|_{1,\Ct_h} $ &  order \tabularnewline
\hline  
$0.3536$& 1.68e-1   &   -- &  1.00  & --         & 5.70e-1  & -- & 6.01  & --& 11.93 & --  \tabularnewline 
$0.1768$& 4.20e-2 & 2.00  & 2.45e-1 &2.03    &1.55e-1&1.88    & 3.12 &  0.95  & 4.70 & 1.34   \tabularnewline 
$0.0884$& 1.05e-2& 2.00 & 6.06e-2 &  2.02   & 3.97e-2&  1.97  & 1.58 & 0.98  & 2.11 &   1.16   \tabularnewline 
$0.0442$& 2.62e-3 & 2.00 & 1.51e-2  &   2.00   & 9.99e-3&1.99     & 7.92e-1 &1.00    &  1.02 & 1.05    \tabularnewline 
$0.0221$&  6.54e-4 &  2.00 & 3.78e-3 &2.00      & 2.50e-3&  2.00   & 3.96e-1 & 1.00    & 5.06e-1 &1.01      \tabularnewline 
$0.0110$& 1.64e-4 & 2.00  & 9.44e-4 & 2.00   &  6.26e-4  & 2.00   & 1.98e-1& 1.00   &2.52e-1&   1.01 \tabularnewline 
\hline 
\end{tabular}
\smallskip
\smallskip
\caption{\footnotesize (Example \ref{example4}) Convergence history for the approximation of $\psi$ and $\bfw$.}\label{ex4_table2}
\end{table}

We denote $e_\xi = \xi-\xi_h$, $e_{\bfu} = \bfu -\bfu_h$, $e_\psi = \psi-\psi_h$, $e_{\bfw} = \bfw -\bfw_h$. We test our algorithm with $\tau=10$ for this example. In fact, the exact solutions of this example are sufficiently smooth, and Tables \ref{ex4_table1}-\ref{ex4_table2} further show that the errors always achieve almost optimal orders of convergence as the theoretical results.
\end{example}

\begin{example}\label{example5}
Now we test our algorithm for the von K\'arm\'an model (\ref{equ_von_karman_v1}) in the L-shape type domain $\Omega = [1,2]^2\setminus ([1.5,2]\times[1,1.5])$. We assume $f=10$ and $g=10$ in (\ref{equ_von_karman_v1}) for this example and choose the parameter $\tau=200$ in the proposed algorithm for this example.

\begin{table}[htbp]\renewcommand{\arraystretch}{1.2}
\footnotesize
\begin{tabular}{c|c c | c c | c c| c c | c c}
\hline 
$h$ &$\|e^\ast_\xi\|_{\Ct_h}$ & {\rm order} & $\| \nabla e^\ast_\xi\|_{\Ct_h} $& order &$ \|e^\ast_{\bfu}\|_{\Ct_h} $& order  &$\|\nabla \cdot e^\ast_{\bfu}\|_{\Ct_h} $  & order  &$\|\nabla e^\ast_\xi\|_{1,\Ct_h} $ &  order \tabularnewline
\hline  
$0.2588$& 7.09e-4 & -- &   5.08e-3  & --   &   5.55e-3   & --        &  8.92e-2  & --&   8.18e-2 & --  \tabularnewline 
$0.1329$& 2.49e-4  & 1.51   & 1.80e-3 & 1.50   & 1.98e-3  & 1.49   & 5.86e-2 &  0.61 &  5.44e-2  &   0.59 \tabularnewline 
$0.0656$& 9.86e-5  & 1.34   & 6.90e-4   & 1.38   &  7.42e-4 & 1.42   &  3.88e-2 & 0.59  & 3.72e-2   &  0.55   \tabularnewline 
$0.0329$& 4.97e-5  & 0.99   &  3.39e-4  & 1.03   &  3.54e-4 &  1.07   &   2.55e-2 &  0.61  &   2.44e-2  & 0.61     \tabularnewline 
$0.0166$&  1.33e-5 & 1.90    & 9.73e-5 & 1.80      &  1.00e-4 & 1.82   & 1.40e-2   &  0.87   &  1.47e-2  & 0.73      \tabularnewline 
\hline 
\end{tabular}
\smallskip
\smallskip
\caption{\footnotesize (Example \ref{example5}) Convergence history for the approximation of $\xi$ and $\bfu$ for the case with $p=0$.}\label{ex5_table1}
\end{table}

\begin{table}[htbp]\renewcommand{\arraystretch}{1.2}
\footnotesize
\begin{tabular}{c|c c | c c | c c| c c | c c}
\hline 
$h$ &$\|e^\ast_\psi\|_{\Ct_h}$ & {\rm order} & $\| \nabla e^\ast_\psi\|_{\Ct_h} $& order &$ \|e^\ast_{\bfw}\|_{\Ct_h} $& order  &$\|\nabla \cdot e^\ast_{\bfw}\|_{\Ct_h} $  & order  &$\|\nabla e^\ast_\psi\|_{1,\Ct_h} $ &  order \tabularnewline
\hline  
$0.2588$&   7.09e-4 & -- &  5.08e-3  & --     &  5.55e-3 & --        & 8.91e-2  & --&  8.18e-2 & --  \tabularnewline 
$0.1329$& 2.49e-4  & 1.51   &  1.80e-3 & 1.50   &  1.98e-3 & 1.49   & 5.86e-2 & 0.60  &  5.43e-2 &  0.59  \tabularnewline 
$0.0656$& 9.86e-5 &  1.34  & 6.90e-4  &1.38    &   7.42e-4 &   1.42 &  3.88e-2 & 0.59  &  3.72e-2  &  0.55   \tabularnewline 
$0.0329$&  4.97e-5 &  0.99 &  3.39e-4  & 1.03   & 3.54e-4 &  1.07   &  2.55e-2 &  0.61  &  2.44e-2  &  0.61    \tabularnewline 
$0.0166$&   1.33e-5 &  1.90   & 9.72e-5  &1.80      & 1.00e-4  & 1.82    & 1.40e-2  &  0.87   &  1.47e-2  &   0.73   \tabularnewline 
\hline 
\end{tabular}
\smallskip
\smallskip
\caption{\footnotesize (Example \ref{example5}) Convergence history for the approximation of $\psi$ and $\bfw$ for the case with $p=0$.}\label{ex5_table2}
\end{table}

\begin{table}[htbp]\renewcommand{\arraystretch}{1.2}
\footnotesize
\begin{tabular}{c|c c | c c | c c| c c | c c}
\hline 
$h$ &$\|e^\ast_\xi\|_{\Ct_h}$ & {\rm order} & $\| \nabla e^\ast_\xi\|_{\Ct_h} $& order &$ \|e^\ast_{\bfu}\|_{\Ct_h} $& order  &$\|\nabla \cdot e^\ast_{\bfu}\|_{\Ct_h} $  & order  &$\|\nabla e^\ast_\xi\|_{1,\Ct_h} $ &  order \tabularnewline
\hline  
$0.2588$& 1.34e-3 & -- &   9.25e-3  & --   &   9.81e-3   & --        &  1.34e-1  & --&   1.22e-1 & --  \tabularnewline 
$0.1329$& 4.98e-4 &  1.43   & 3.40e-3 & 1.44    & 3.62e-3 & 1.44    & 8.46e-2 &  0.66  &  7.83e-2  &  0.64   \tabularnewline 
$0.0656$& 1.94e-4 &  1.36   & 1.29e-3  &  1.40   &  1.35e-3 &  1.42   &  5.46e-2 &   0.63 &  5.24e-2   &  0.58    \tabularnewline 
$0.0329$& 9.49e-5  &  1.03   &  6.16e-4  & 1.07     &  6.34e-4 & 1.09     &   3.57e-2 & 0.61    &   3.41e-2  & 0.62      \tabularnewline 
$0.0166$&  2.56e-5 &  1.89    & 1.75e-4 &   1.82     &  1.79e-4 & 1.82    & 1.96e-2  &  0.87    &  2.05e-2  &   0.73     \tabularnewline 
\hline 
\end{tabular}
\smallskip
\smallskip
\caption{\footnotesize (Example \ref{example5}) Convergence history for the approximation of $\xi$ and $\bfu$ for the case with $p=20$.}\label{ex5_table3}
\end{table}

\begin{table}[htbp]\renewcommand{\arraystretch}{1.2}
\footnotesize
\begin{tabular}{c|c c | c c | c c| c c | c c}
\hline 
$h$ &$\|e^\ast_\psi\|_{\Ct_h}$ & {\rm order} & $\| \nabla e^\ast_\psi\|_{\Ct_h} $& order &$ \|e^\ast_{\bfw}\|_{\Ct_h} $& order  &$\|\nabla \cdot e^\ast_{\bfw}\|_{\Ct_h} $  & order  &$\|\nabla e^\ast_\psi\|_{1,\Ct_h} $ &  order \tabularnewline
\hline  
$0.2588$&   7.08e-4 & -- &  5.08e-3  & --     &  5.54e-3 & --        & 8.91e-2  & --&   8.18e-2 & --  \tabularnewline 
$0.1329$& 2.49e-4  &  1.51   &  1.80e-3 &1.50     &  1.98e-3 & 1.48    & 5.86e-2 & 0.60    &  5.43e-2 & 0.59    \tabularnewline 
$0.0656$& 9.86e-5 &   1.34  &6.90e-4 & 1.38    &   7.42e-4 &   1.42  &  3.88e-2 &  0.59  &  3.72e-2  &   0.55   \tabularnewline 
$0.0329$&  4.96e-5 &  0.99  & 3.39e-4  & 1.03    & 3.54e-4  & 1.07     & 2.55e-2 &  0.61   &  2.44e-2  &   0.61    \tabularnewline 
$0.0166$&   1.33e-5 &   1.90   & 9.72e-5  &    1.80   & 1.00e-4  & 1.82     & 1.40e-2  &  0.87    &  1.47e-2  &  0.73     \tabularnewline 
\hline 
\end{tabular}
\smallskip
\smallskip
\caption{\footnotesize (Example \ref{example5}) Convergence history for the approximation of $\psi$ and $\bfw$ for the case with $p=20$.}\label{ex5_table4}
\end{table}

We test two cases of von K\'arm\'an model (\ref{equ_von_karman_v1}) with $p=0$ and $p=20$ respectively. Let $\xi^\ast,\bfu^\ast,\psi^\ast,\bfw^\ast$ be the approximation solutions on the finest mesh with mesh size $h=0.00831$. We denote the errors $e_\xi^\ast = \xi^\ast - \xi^\ast_h$, $e^\ast_{\bfu} = {\bfu}^\ast - {\bfu}_h$, $e_{\psi}^\ast =  \psi^\ast - \psi^\ast_h$, $e^\ast_\bfw = \bfw^\ast - \bfw_h$.

Tables \ref{ex5_table1}-\ref{ex5_table2} show the errors for the case with $p=0$, and the errors for the case with $p=20$ are shown in Tables \ref{ex5_table3}-\ref{ex5_table4}. As the regularity result in Example \ref{example2}, the parameter $\beta$ in (\ref{ass_reg_vk}) holds for $\beta \in (1/2,1)$ for the solutions in this example. We can see from Tables \ref{ex5_table1}-\ref{ex5_table4} that the errors from different cases achieve almost optimal orders of convergence.

\end{example}

\begin{example}\label{example6}
We further test our algorithm for the von K\'arm\'an model (\ref{equ_von_karman_v1}) with $p=20$ in another L-shape type domain $\Omega =  [1,2]^2\setminus ([1,4/3]\times [4/3,5/3] \cup [1,5/3]\times [1,4/3])$. We assume $f=100$ and $g=1$ in (\ref{equ_von_karman_v1}) for this example and also choose the parameter $\tau=200$ in the proposed algorithm for this example.

\begin{table}[htbp]\renewcommand{\arraystretch}{1.2}
\footnotesize
\begin{tabular}{c|c c | c c | c c| c c | c c}
\hline 
$h$ &$\|e^\ast_\xi\|_{\Ct_h}$ & {\rm order} & $\| \nabla e^\ast_\xi\|_{\Ct_h} $& order &$ \|e^\ast_{\bfu}\|_{\Ct_h} $& order  &$\|\nabla \cdot e^\ast_{\bfu}\|_{\Ct_h} $  & order  &$\|\nabla e^\ast_\xi\|_{1,\Ct_h} $ &  order \tabularnewline
\hline  
$0.2313$&  7.95e-3   & -- &  5.54e-2     & --   & 5.93e-2 & --        &  9.83e-1     & --& 8.99e-1     & --  \tabularnewline 
$0.1270$&  3.64e-3 &   1.13   &  2.58e-2 &1.10      & 2.70e-2  &   1.14   & 6.71e-1  & 0.55     &   6.84e-1 & 0.39     \tabularnewline 
$0.0658 $& 1.87e-3  & 0.96     & 1.25e-2   & 1.05     &  1.29e-2   &   1.07   & 4.60e-1   &  0.54   &   4.49e-1   &   0.61    \tabularnewline 
$0.0335 $& 7.33e-4   &  1.35    & 4.98e-3    &  1.33     & 5.12e-3   &    1.33   &   3.13e-1  & 0.55     &  3.08e-1    &   0.54     \tabularnewline 
$0.0166 $&  2.23e-4  &  1.72     &  1.56e-3 &   1.67      &  1.59e-3  &  1.69    &  1.64e-1  &   0.93    &  1.71e-1   &   0.85      \tabularnewline 
\hline 
\end{tabular}
\smallskip
\smallskip
\caption{\footnotesize (Example \ref{example6}) Convergence history for the approximation of $\xi$ and $\bfu$.}\label{ex6_table1}
\end{table}

\begin{table}[htbp]\renewcommand{\arraystretch}{1.2}
\footnotesize
\begin{tabular}{c|c c | c c | c c| c c | c c}
\hline 
$h$ &$\|e^\ast_\psi\|_{\Ct_h}$ & {\rm order} & $\| \nabla e^\ast_\psi\|_{\Ct_h} $& order &$ \|e^\ast_{\bfw}\|_{\Ct_h} $& order  &$\|\nabla \cdot e^\ast_{\bfw}\|_{\Ct_h} $  & order  &$\|\nabla e^\ast_\psi\|_{1,\Ct_h} $ &  order \tabularnewline
\hline  
$0.2313$&  1.89e-4 & -- &  1.43e-3    & --   & 1.45e-3      & --        &    1.46e-2  & --&   1.35e-2  & --  \tabularnewline 
$0.1270$&  9.70e-5 & 0.96     & 7.41e-4  &   0.95   &7.50e-4   &  0.95    &   8.34e-3&  0.81   & 8.05e-3   &  0.75    \tabularnewline 
$0.0658 $&  4.94e-5 &  0.97    & 3.71e-4   & 1.00      &3.74e-4&  1.00    & 4.45e-3   &  0.91   &  4.48e-3    &  0.85     \tabularnewline 
$0.0335 $& 2.07e-5   & 1.25     & 1.52e-4    & 1.29      &  1.53e-4  & 1.29      &   2.10e-3  & 1.08     & 2.31e-3     & 0.96        \tabularnewline 
$0.0166 $&  6.77e-6  &  1.61     &4.88e-5   &  1.64       &4.89e-5    & 1.65      &   8.64e-4 &  1.28     & 1.06e-3   &   1.12      \tabularnewline 
\hline 
\end{tabular}
\smallskip
\smallskip
\caption{\footnotesize (Example \ref{example6}) Convergence history for the approximation of $\psi$ and $\bfw$.}\label{ex6_table2}
\end{table}

Since there are not exact solutions for this example, we also use the approximation solutions on the finest mesh with mesh size $h=0.00843$ to test the convergence of the proposed algorithm. For the regularity result of the solutions in this example, the parameter $\beta$ in (\ref{ass_reg_vk}) holds for $\beta \in (1/2,1)$. We can see from Tables \ref{ex6_table1}-\ref{ex6_table2} that most of the convergence rates of different kinds of errors are nearly optimal.

\end{example}

\section{Conclusions}\label{sec:conclusion}
We propose a new mixed finite element scheme using element-wise stabilization for the biharmonic equation on 
Lipshcitz polyhedral domains in any dimension. When solving the biharmonic equation, one merit of this scheme is 
that it produces symmetric and positive definite linear system, and the discrete $H^{2}$-stability and 
optimal convergence are obtained. Moreover, we extend the new method to solve the von K\'arm\'an equations. 
The existence, uniqueness and stability for the nonlinear system based on the new scheme, and the $H^{2}$-optimal 
convergence rate are also obtained. For the numerical experiments that we have performed thus far, our new scheme has  
desired  efficiency and convergence rates, when solving the biharmonic equation and the von K\'arm\'an equations.

\providecommand{\href}[2]{#2}

\end{document}